\newcommand\R{\mathbb R}
\newcommand\cC{\mathcal C}
\newcommand\cD{\mathcal D}
\begin{document}
\title[Pressureless gas dynamics]
{Initial-boundary value problem for 1D pressureless gas dynamics}

\author[L. Neumann, M. Oberguggenberger, M. R. Sahoo, A. Sen]
{Lukas Neumann, Michael Oberguggenberger, Manas R. Sahoo, Abhrojyoti Sen}

\address{\flushleft \parbox{12cm}{Lukas Neumann and Michael Oberguggenberger \newline
Department for Basic Sciences in Engineering Sciences\\
University of Innsbruck, Technikerstra{\ss}e 13, 6020 Innsbruck, Austria.\\
\rm\textit{Email address}: \texttt{Lukas.Neumann@uibk.ac.at, Michael.Oberguggenberger@uibk.ac.at}}}
\address{\flushleft \parbox{12cm}{Manas R. Sahoo and Abhrojyoti Sen \newline
School of Mathematical Sciences\\
National Institute of Science Education and Research, HBNI\\
Jatni, Kurda, Bhubaneswar 752050, India.\\
\rm\textit{Email address}: \texttt{manas@niser.ac.in, abhrojyoti.sen@niser.ac.in}}}
\subjclass[2010]{ 35D30, 35F61, 35L67, 35Q35, 76N15}
\keywords{Pressureless gas dynamics; initial-boundary value problem; measure-valued solutions; generalized potentials; generalized characteristic curves}

\begin{abstract}
The paper considers the system of pressureless gas dynamics in one space dimension. The question of solvability of the initial-boundary value problem is addressed. Using the method of generalized potentials and characteristic triangles, extended to the boundary value case, an explicit way of constructing measure-valued solutions is presented. The prescription of boundary data is shown to depend on the behavior of the generalized potentials at the boundary. We show that  the constructed solution satisfies an entropy condition and it conserves mass, whereby mass may accumulate at the boundary. Conservation of momentum again depends on the behavior of the generalized boundary potentials. There is a large amount of literature where the initial value problem for the pressureless gas dynamics model has been studied. To our knowledge, this paper is the first one which considers the initial-boundary value problem.
\end{abstract}

\maketitle
\numberwithin{equation}{section}
\numberwithin{equation}{section}
\newtheorem{theorem}{Theorem}[section]
\newtheorem{remark}[theorem]{Remark}
\newtheorem{lem}[theorem]{Lemma}
\newtheorem{definition}[theorem]{Definition}
\newtheorem{corollary}[theorem]{Corollary}

\section{Introduction}
This paper addresses the solvability of the initial-boundary value problem for the system of pressureless gas dynamics
\begin{equation}
\begin{aligned}
&\rho_t+(\rho u)_x=0\\
&(\rho u)_t+(\rho u^2)_x=0
\end{aligned}
\label{e1.1}
\end{equation}
in one space dimension. Here $\rho$ denotes the density and $u$ the velocity. The two lines in equation \eqref{e1.1} express conservation of mass and of momentum, respectively. We adjoin initial data
\begin{equation}
\rho(x,0) = \rho_0(x),\quad u(x,0) =  u_0(x),\quad x > 0,
\label{e1.2}
\end{equation}
and ask under what conditions and in what sense boundary data
\begin{equation}\label{e1.3}
u(0,t)=u_b(t),\quad (\rho u) (0,t) = (\rho_b u_b)(t), \quad t>0,
\end{equation}
can be prescribed. It is assumed that the data $u_0$ and $u_b$ are bounded measurable functions with $u_b > 0$. Further, $\rho_0$ and $\rho_b $ are positive locally bounded measurable functions.

The initial value problem \eqref{e1.1}, \eqref{e1.2} has been intensively studied in the literature. The key issue is that, in general, $\rho$ is no longer a function, but a measure. This led to the introduction of various strongly related notions of weak solutions, such as measure solutions \cite{Bouchut94}, duality solutions \cite{BouchutJames99} (based on \cite{BouchutJames98}), duality solutions obtained by vanishing viscosity \cite{Boudin00},
mass and momentum potentials \cite{BrenierGrenier98,d3,Huang05}, together with generalized characteristics \cite{Wang97},
generalized potentials and variational principles \cite{ERykovSinai96,Wang01,WHD97}. In this paper, we shall extend the approach of \cite{Wang01,WHD97} to the boundary value problem.

Let us begin by discussing what is meant by a generalized solution to the system of differential equations \eqref{e1.1}. We shall construct locally bounded measurable functions $m(x,t)$, $u(x,t)$ such that for almost all $t$, $m(x,t)$ is of locally bounded variation with respect to  $x$. Thus, for almost all $t$, the distributional derivative $m_x$ defines a Radon measure $\rho$. In addition, $u$ is measurable with respect to $\rho$. Following \cite[Definition 1.1]{WHD97}, the pair $(\rho,u)$ is viewed as a generalized solution to \eqref{e1.1}, if
\begin{equation}\label{system_m}
\begin{array}{lcr}
    \displaystyle\iint \varphi_t m\, dx\, dt - \iint \varphi u\, dm\, dt = 0\\[8pt]
    \displaystyle\iint \big(\psi_t u + \psi_x u^2\big)\, dm\, dt = 0
\end{array}
\end{equation}
for all test functions $\varphi,\psi\in \cD(\mathbb{R}^2_+)$. The construction of the solution $(m,u)$ to \eqref{system_m} will be based on the method of generalized potentials and characteristic triangles from \cite{WHD97}. However, differently from \cite{WHD97}, we will need two types of generalized potentials (initial and boundary potential) and their relation, as well as different types of characteristic triangles, depending on the location of their apex.

More precisely, the initial and boundary potentials are defined by
\begin{align}
&F(y,x,t)=\int_{0}^{y}[tu_0(\eta)+\eta-x]\rho_0(\eta)d\eta,\label{e2.1}\\
&G(\tau, x, t)=\int_{0}^{\tau} [x-u_b(\eta)(t-\eta)]\rho_b(\eta)u_b(\eta)d\eta\,.
\label{e2.2}
\end{align}
Further,
\begin{equation}
F(x,t)=\min_{y\in [0, \infty)}F(y,x,t),
\label{e2.6}
\end{equation}
\begin{equation}
G(x,t)=\displaystyle{\min_{{\tau\in[0,\infty)}}}G(\tau, x, t).
\label{e2.5}
\end{equation}
The characteristic triangles with apex $(x,t)$ will depend on whether $F(x,t) < G(x,t)$, $F(x,t) > G(x,t)$ or $F(x,t) = G(x,t)$. For $x=0$, the respective relation between $F(0,t)$ and $G(0,t)$ will also decide about the assumption of the boundary data.

To further clarify the solution concept, we wish to show that $(\rho,u)$ actually is a weak solution to system \eqref{e1.1} in its proper sense.
Let us recall the measure theoretic point of view and the distributional point of view (for simplicity in the one-dimensional case). If $m$ is a function of locally bounded variation, it defines a Lebesgue-Stieltjes measure $dm$. On the other hand, its derivative in the sense of distributions defines a Radon measure $\rho = m_x$. The two objects are the same, identified by the chain of equalities
\[
  \int_\mathbb{R}\varphi(x)m(dx) = \langle\rho,\varphi\rangle = - \langle m,\varphi_x\rangle = -\int_\mathbb{R} \varphi_x(x)m(x) dx
\]
for $\varphi\in \cD(\mathbb{R})$. The first equality can be extended to $\varphi\in\cC(\mathbb{R})$ with compact support. What is more, the Lebesgue-Stieltjes integral can be extended to all functions $\varphi$ which are integrable with respect to $\rho$. This a priori makes no sense at the other end of the chain of equalities, but allows us to \emph{define} the product of the measure $\rho$ with the bounded, $\rho$-measurable function $u$ as the distribution given by
\[
   \langle \rho u,\varphi\rangle = \int_\mathbb{R} \varphi(x)u(x)m(dx)
\]
for $\varphi\in \cD(\mathbb{R})$.

Using this identification, the second line in \eqref{system_m} means
\[
  0 = \iint \big(\psi_t u + \psi_x u^2\big)\, dm\, dt = \langle \rho u,\psi_t\rangle + \langle \rho u^2,\psi_x\rangle,
\]
which is exactly the distributional meaning of the second line of \eqref{e1.1}. To obtain the first line of \eqref{e1.1}, one has to insert $\varphi_x$ in place of $\varphi$
in \eqref{system_m} to obtain
\[
   0 = \iint \varphi_{xt} m\, dx\, dt - \iint \varphi_x u\, dm\, dt = - \langle \rho, \varphi_t\rangle - \langle \rho u, \varphi_x\rangle.
\]
The actual proof of \eqref{system_m} will be done on yet a higher level. Apart from the mass potential $m(x,t)$, momentum and energy potentials $q(x,t)$ and $E(x,t)$ will be constructed, both bounded measurable functions which are in addition of bounded variation in $x$ for almost all $t$. Further, the Lebesge-Stieltjes measures $dq$ and $dE$ are absolutely continuous with respect to $dm$, namely
\[
  dq = u dm,\quad dE = \tfrac12 u^2 dm,
\]
and they satisfy the system
\begin{equation}\label{System_mqE}
\begin{array}{lcr}
    m_t + q_x = 0\\[4pt]
    q_t + (2E)_x = 0
\end{array}
\end{equation}
in the sense of distributions. By similar arguments as above, this system is equivalent with \eqref{system_m}.

What concerns the initial data, we will show that \eqref{e1.2} is satisfied in the sense that $\rho$ and $u$ are continuous functions of time with values in $\cD'(\R)$. Actually for almoust all $x$ we show  $\lim_{t\to 0} u(x,t)= u_0(x)$, and $\lim_{t\to 0}m(x,t) = \int_0^x \rho_0(y) dy$.

We turn to the assumption of the boundary data \eqref{e1.3}. As is well known from the theory of conservation laws \cite{bar, j3, JosephSahoo12,Le1}, one cannot arbitrarily prescribe boundary data, because a priori there is no control of the sign of $u(0+,t)$, except in the case when $u_0$ is positive and hence $u(x,t) > 0$ everywhere (recall that $u_b$ was assumed to be positive).

We will show the following: If $t>0$ is a Lebesgue point of $u_b$ and $\rho_b$ and $F(0,t) > G(0,t)$, then $\lim_{x\to 0+} u(x,t) = u_b(t)$. If in addition $u_b$ is continuously differentiable and $\rho_b$ is locally Lipschitz continuous, then $\lim_{x\to 0+} \rho(x,t)u(x,t) = \rho_b(t)u_b(t)$. If $F(0,t) < G(0,t)$ then $u(0+,t) < 0$ and the boundary condition \eqref{e1.3} cannot be fulfilled. Rather, it may happen that mass accumulates at the boundary in the form of $\delta\cdot (m(0+,t) - m(0,t))$. However, the solution we construct conserves total mass. Momentum is conserved at the points of time $t$ for which $F(0,t)\geq G(0,t)$, while it satisfies an inequality otherwise.
For further aspects of boundary conditions for systems involving measure solution, see \cite{ma2,NNOS17}.

The plan of exposition is as follows: Section 2 is devoted to the construction of the solution. In Section 3 it will be shown that the constructed solution satisfies system \eqref{system_m}, and hence \eqref{e1.1}. Section 4 addresses the assumption of initial and boundary values, as well as conservation of mass and momentum. In Section 5, it will be shown that the solution satisfies Oleinik's entropy condition. Finally, Section 6 contains a number of examples illustrating some of the possibly occurring effects.

\section{Construction of solution}
In this section, we construct the solution for the initial-boundary value problem. Recall the definition of the initial and boundary potentials \eqref{e2.1}, \eqref{e2.2},
\begin{align*}
&F(y,x,t)=\int_{0}^{y}[tu_0(\eta)+\eta-x]\rho_0(\eta)d\eta\,,\\
&G(\tau, x, t)=\int_{0}^{\tau} [x-u_b(\eta)(t-\eta)]\rho_b(\eta)u_b(\eta)d\eta\,.
\end{align*}

Given $(x,t)$, let $\tau^{*}(x,t)$ and $\tau_{*}(x,t)$ be the uppermost and lowermost points on the $t$-axis such that
\begin{equation*}
\min_{\tau \geq 0}G(\tau, x, t)= G(\tau^{*}(x,t), x, t)=G(\tau_{*}(x,t), x, t)\,.
\end{equation*}
Similarly, let $y_{*}(x,t)$ and $y^* (x,t)$ be the leftmost and rightmost points  respectively on the $x$-axis such that \begin{equation*}
\min_{y\geq 0}F(y,x,t)= F(y_{*}(x,t), x,t)= F(y^{*}(x,t), x,t)\,.
\end{equation*}
Note that these minima exist as real numbers, because $u_0$ is bounded from above and below and $u_b$ is positive. The following lemma collects some properties of the minimizers.
\begin{lem}\label{lnew}
With our assumptions on the initial and boundary data we have
\begin{enumerate}
\item $\tau_{*}(x,t)$ and $\tau^{*}(x,t)$ are, for fixed $x$, monotonically increasing in $t$ and for fixed $t$ monotonically decreasing in $x$. Moreover, we have for $t_1<t_2$ that $\tau^{*}(x,t_1)\leq\tau_{*}(x,t_2)$ and for $x_1<x_2$ that $\tau_{*}(x_1,t)\geq\tau^{*}(x_2,t)$.
\item $y_{*}(x,t)$ and $y^{*}(x,t)$ are, for fixed $t$, monotonically increasing in $x$ and for $x_1<x_2$ we have $y^{*}(x_1,t)\leq y_{*}(x_2,t)$.
\item $\tau_{*}(0,t)=\tau^{*}(0,t)=t$.
\item $y_{*}(x,t)$ is lower semicontinuous and $y^{*}(x,t)$ is upper semicontinuous.
\item $\tau_{*}(x,t)$ is lower semicontinuous and $\tau^{*}(x,t)$ is upper semicontinuous.
\end{enumerate}
\end{lem}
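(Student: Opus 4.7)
The engine behind claims (1) and (2) is a Monge-type second-difference identity for the potentials. Differentiating \eqref{e2.1} and \eqref{e2.2} once in the minimization variable and once in the parameter, one finds for $\tau_1<\tau_2$ and $t_1<t_2$
\[
[G(\tau_2,x,t_2)-G(\tau_1,x,t_2)] - [G(\tau_2,x,t_1)-G(\tau_1,x,t_1)] = -(t_2-t_1)\int_{\tau_1}^{\tau_2} u_b(\eta)^2\rho_b(\eta)\,d\eta,
\]
together with the analogous formulas $(x_2-x_1)\int_{\tau_1}^{\tau_2}\rho_b u_b\,d\eta$ for the mixed $(\tau,x)$-difference of $G$ and $-(x_2-x_1)\int_{y_1}^{y_2}\rho_0\,d\eta$ for the mixed $(y,x)$-difference of $F$. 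By the a.e.\ positivity of $\rho_0$, $\rho_b$, and $u_b$, each right-hand side has a strict sign whenever the integration interval is nondegenerate.

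From each identity the corresponding cross inequality in (1)--(2) follows by a standard crossing argument. For example, to prove $\tau^*(x,t_1)\le\tau_*(x,t_2)$, assume the contrary and set $\tau_1=\tau_*(x,t_2)$, $\tau_2=\tau^*(x,t_1)$; minimality gives $G(\tau_2,x,t_1)-G(\tau_1,x,t_1)\le 0$ and $G(\tau_2,x,t_2)-G(\tau_1,x,t_2)\ge 0$, making the left-hand side of the displayed identity nonnegative, contradicting its strict negativity. The $x$-comparison for $\tau$ and the $x$-comparison for $y$ are proved symmetrically from the remaining two identities. The per-variable monotonicities for $\tau_*,\tau^*,y_*,y^*$ in (1) and (2) then follow by combining these cross inequalities with the trivial $\tau_*\le\tau^*$ and $y_*\le y^*$.

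Claim (3) is a direct computation: at $x=0$ the boundary potential collapses to $G(\tau,0,t)=-\int_0^\tau u_b(\eta)^2(t-\eta)\rho_b(\eta)\,d\eta$, whose $\tau$-derivative $-u_b(\tau)^2(t-\tau)\rho_b(\tau)$ is negative on $(0,t)$ and positive on $(t,\infty)$, so the absolutely continuous function $\tau\mapsto G(\tau,0,t)$ attains its unique minimum at $\tau=t$.

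For (4) and (5) I would invoke the standard closed-graph argument for argmin. Given $(x_n,t_n)\to(x_0,t_0)$ and $y_n=y_*(x_n,t_n)$, the sequence is locally bounded: once $y$ exceeds $x+t\|u_0\|_\infty$ the integrand in \eqref{e2.1} is nonnegative, so $F(\cdot,x,t)$ is nondecreasing there, trapping all minimizers in a fixed compact set as $(x,t)$ varies in a bounded region. Extract a subsequence $y_{n_k}\to L=\liminf_n y_*(x_n,t_n)$; continuity of $F$ in all three arguments turns the minimality inequality $F(y_{n_k},x_{n_k},t_{n_k})\le F(\tilde y,x_{n_k},t_{n_k})$, valid for every fixed $\tilde y$, into $F(L,x_0,t_0)\le F(\tilde y,x_0,t_0)$ in the limit, so $L$ is a minimizer and therefore $L\ge y_*(x_0,t_0)$. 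Upper semicontinuity of $y^*$ and the two statements for $\tau_*,\tau^*$ follow by the same template. The only place requiring genuine care is checking that the strict inequalities in the Monge identities persist under merely measurable data; this is exactly the role of the a.e.\ positivity of $\rho_0,\rho_b,u_b$, after which the lemma reduces to bookkeeping on the explicit formulas \eqref{e2.1}--\eqref{e2.2}.
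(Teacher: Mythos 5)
Your proof is correct and follows essentially the same route as the paper: the Monge-type crossing argument for (1)--(2) is just the paper's device of adding the two minimality inequalities (the $x$-case and the $F$-case being the parts the paper delegates to Lemma~2.1 of \cite{WHD97}), the computation for (3) matches the paper's ``obvious'' step, and the argmin closed-graph argument for (4)--(5) is the standard proof the paper cites from Lemma~2.2 of \cite{WHD97}. No gaps; you merely write out explicitly the details the paper outsources.
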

\begin{proof}
(1) Let $x,\,t_1,\,t_2>0$ be arbitrary but fixed and $\tau_1$ a minimizer of $G(\tau,x,t_1)$ and $\tau_2$ one of $G(\tau,x,t_2)$. Now we have
\[
0\leq G(\tau_2,x,t_1)-G(\tau_1,x,t_1)\,,\ \text{and } 0\leq G(\tau_1,x,t_2)-G(\tau_2,x,t_2)\,.
\]
Summing the two inequalities results in
\[
0 \leq (t_2-t_1)\int_{\tau_1}^{\tau_2}\rho_b(\eta){u_b}^2(\eta) d \eta\,.
\]
Since the term in the integral is positive by our assumptions we conclude that the minimizers have to be increasing in $t$. \\
Now on the other hand fixing $t,\,x_1,\,x_2$ and denoting by $\tau_1$ a minimizer of $G(\tau,x_1,t)$ and by $\tau_2$ one of $G(\tau,x_2,t)$ we derive in the same way
\[
0\leq (x_1-x_2)\int_{\tau_1}^{\tau_2}\rho_b(\eta)u_b(\eta) d \eta\,.
\]
From this one can conclude that the minimizers are decreasing in $x$.\\
(2) is Lemma 2.1 in \cite{WHD97}, from which also the proof of (1) is adopted. (3) is obvious, (4) see Lemma 2.2 in \cite{WHD97}.
(5) is proved along the lines of (4).
\end{proof}
\begin{remark}\label{rem:const}
If $\min_{\tau\geq 0} G(\tau,x,t)$ is constant on an interval $[x_1,x_2]\times \{t\}$ one can argue similarly to the proof of (1) above:
\[
0\leq G(\tau_2,x_1,t)-G(\tau_1,x_1,t)=G(\tau_2,x_1,t)-G(\tau_2,x_2,t)=(x_1-x_2)\!\!\int_0^{\tau_2}\!\!\!\!\!\rho_b(\eta)u_b(\eta) d \eta
\]
Now since $\rho_b$ and $u_b$ are assumed to be strictly positive we conclude that any minimizer $\tau_2$ of $G(\tau,x_2,t)$ has to be zero and thus $\min_{\tau\geq0} G(\tau,x_2,t)=0$. Since $G$ is constant on the interval it has to be equal to zero on the whole interval. Note also that the minimizers on the whole interval have to be zero (uniquely) because one can replace $x_2$ by any point in the interval in the estimate above.\\
This situation will correspond to the case when the solution contains a rarefaction wave starting at the origin.
\end{remark}
We first quote the following result, that was established by Wang, Huang and Ding \cite{WHD97} in their study of the initial value problem. This will be central also in our work for parts of the solution depending only on the initial data.
\begin{lem}\label{l1}
For fixed $(x,t)$, let the minimum ${\min_{y\in [0, \infty)}}F(y,x, t)$ be attained at $y(x,t)$. Then for any given point  $(x', t')$ on the line segment joining $(y(x,t), 0)$ and  $(x,t)$,
we have $F(y, x', t') > F(y(x,t), x', t' )$ for $y\neq y(x,t)$.
\end{lem}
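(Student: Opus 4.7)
The plan is to parametrize the segment by a single parameter $s\in[0,1]$ and exhibit the difference $F(y,x',t')-F(y(x,t),x',t')$ as a sum of two non-negative pieces, one of which is strictly positive whenever $y\neq y(x,t)$. The conceptual content is that the functional $F(y,\cdot,\cdot)$ transports its minimizer along the characteristic line $\{(y(x,t)+s(x-y(x,t)),st):s\in[0,1]\}$, so everything should follow from an algebraic identity that relates the integrand at $(x',t')$ with the integrand at $(x,t)$.

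Concretely, writing $\bar y:=y(x,t)$ and parametrizing the segment by $(x',t')=\bigl((1-s)\bar y+sx,\,st\bigr)$, I would start from
$$F(y,x',t')-F(\bar y,x',t')=\int_{\bar y}^{y}\bigl[t'u_0(\eta)+\eta-x'\bigr]\rho_0(\eta)\,d\eta$$
and split $\eta=s\eta+(1-s)\eta$ to obtain the pointwise identity
$$t'u_0(\eta)+\eta-x'=s\bigl[tu_0(\eta)+\eta-x\bigr]+(1-s)(\eta-\bar y).$$
Integration then yields
$$F(y,x',t')-F(\bar y,x',t')=s\bigl[F(y,x,t)-F(\bar y,x,t)\bigr]+(1-s)\int_{\bar y}^{y}(\eta-\bar y)\rho_0(\eta)\,d\eta.$$

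The first bracket is $\geq 0$ by the minimality of $\bar y$ at $(x,t)$. Since $\rho_0$ is strictly positive, the second integrand has the sign of $\eta-\bar y$, so the integral is strictly positive whenever $y\neq\bar y$ (both in the case $y>\bar y$ and, by reversing the limits, in the case $y<\bar y$). Therefore, as long as $s<1$, i.e., $(x',t')\neq(x,t)$, the right-hand side is strictly positive and the lemma follows.

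The main delicate point I anticipate is the endpoint $s=1$: there the convex combination collapses and the argument only recovers the weak inequality intrinsic to the definition of $\bar y$, so the statement should be understood with $(x',t')$ distinct from the apex (or one appeals to uniqueness of the minimizer when available). Apart from recognising the algebraic identity displayed above—which is precisely what expresses the transport of the minimum property along the characteristic—I expect no further obstacle.
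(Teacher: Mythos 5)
Your proof is correct and is essentially the argument the paper relies on: the paper simply defers to Lemma~2.3 of \cite{WHD97} (and Lemma~2.4 of \cite{Wang01}), and your identity
$F(y,x',t')-F(\bar y,x',t')=s\bigl[F(y,x,t)-F(\bar y,x,t)\bigr]+(1-s)\int_{\bar y}^{y}(\eta-\bar y)\rho_0(\eta)\,d\eta$
with $(x',t')=((1-s)\bar y+sx,\,st)$ is exactly that convexity/transport decomposition, the same device the paper itself uses for the boundary analogue in Lemma~\ref{l2} (nonnegative term from minimality at the apex plus a strictly positive term from $\rho_0>0$). Your caveat about $s=1$ matches the intended reading of the statement, where strictness at the apex itself is not claimed (compare the exclusion $(\bar x,\bar t)\neq(x,t)$ in Lemma~\ref{l2}).
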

\begin{proof}
The proof follows directly from the proof of Lemma~2.3. in \cite{WHD97}. It also follows from the poof of Lemma~2.4 in \cite{Wang01}, noting that we assumed $\rho_0$ to be strictly positive (at least in the $L_\infty$-sense).
\end{proof}
Now we establish a similar result for the part of the solution depending on the boundary data.
\begin{lem}\label{l2}
For fixed $(x, t)$, $x, t >0$, let $\tau= \tau_{1}$ be a point which minimizes the functional
$G(\tau, x, t)$. Let $(\bar x, \bar t )\neq(x,t)$ be any point on the line segment joining $(x,t)$ and
$(0, t_{1})$. Then the minimizer of $G(\tau,\bar x, \bar t)$ is unique and is $\tau_{1}$.
\end{lem}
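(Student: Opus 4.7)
The plan is to reduce the minimization question at $(\bar x,\bar t)$ to the one already solved at $(x,t)$ by writing the integrand of $G(\tau,\bar x,\bar t)$ as a convex combination of the integrand of $G(\tau,x,t)$ and an extra term whose sign we can read off directly. First I would parametrize the segment from $(0,\tau_1)$ to $(x,t)$ as
$$(\bar x,\bar t)=\bigl(sx,\, st+(1-s)\tau_1\bigr),\qquad s\in[0,1],$$
so that the hypothesis $(\bar x,\bar t)\neq(x,t)$ means $s<1$.

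The key step is the pointwise identity
$$\bar x - u_b(\eta)(\bar t-\eta) = s\bigl[x-u_b(\eta)(t-\eta)\bigr] + (1-s)\,u_b(\eta)(\eta-\tau_1),$$
which one verifies by substituting $\bar x=sx$ and $\bar t-\eta=s(t-\eta)+(1-s)(\tau_1-\eta)$ and cancelling. Multiplying by $\rho_b(\eta)u_b(\eta)$ and integrating from $\tau_1$ to $\tau$ then gives the clean decomposition
$$G(\tau,\bar x,\bar t)-G(\tau_1,\bar x,\bar t) = s\bigl[G(\tau,x,t)-G(\tau_1,x,t)\bigr] + (1-s)\!\int_{\tau_1}^{\tau}(\eta-\tau_1)\,\rho_b(\eta)\,u_b(\eta)^2\,d\eta.$$
The first bracket on the right is nonnegative since $\tau_1$ minimizes $G(\cdot,x,t)$. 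For $\tau\neq\tau_1$, the remaining integral is strictly positive: its integrand $(\eta-\tau_1)\rho_b u_b^2$ has the same sign as $\tau-\tau_1$ throughout $(\min(\tau,\tau_1),\max(\tau,\tau_1))$ because $\rho_b u_b^2>0$ by assumption, and the sign of $d\eta$ compensates when $\tau<\tau_1$. Since $s<1$, the two terms together yield $G(\tau,\bar x,\bar t)>G(\tau_1,\bar x,\bar t)$ for every $\tau\neq\tau_1$, which proves both that $\tau_1$ is a minimizer at $(\bar x,\bar t)$ and that it is the \emph{unique} one.

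The only delicate point is getting the algebraic identity in the second display exactly right, since the entire argument hinges on the fact that the correction term $(1-s)u_b(\eta)(\eta-\tau_1)$ changes sign at precisely $\eta=\tau_1$. Importantly, because we never need to differentiate $G$ in $\tau$, the argument makes no use of the Euler-Lagrange relation $x=u_b(\tau_1)(t-\tau_1)$ and works uniformly whether $\tau_1$ is an interior minimizer or the boundary value $\tau_1=0$, and independently of any continuity of $\rho_b$ or $u_b$ at $\tau_1$.
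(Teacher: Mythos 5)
Your proof is correct and is essentially the paper's own argument: writing $s=\bar x/x$, your identity is exactly the paper's decomposition $G(\tau,\bar x,\bar t)-G(\tau_1,\bar x,\bar t)=\tfrac{\bar x}{x}\bigl[G(\tau,x,t)-G(\tau_1,x,t)\bigr]+\bar x\int_{\tau_1}^{\tau}u_b^2(\eta)\rho_b(\eta)(\tau_1-\eta)\bigl[\tfrac1x-\tfrac1{\bar x}\bigr]d\eta$, with the same sign analysis of the two terms. The only minor difference is that your parametrization never divides by $\bar x$, so it also covers the endpoint $\bar x=0$ without extra comment.
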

\begin{proof}
We want to show that for $\tau\neq\tau_1\colon$
\begin{equation*}
G(\tau, \bar{x}, \bar{t})-G(\tau_1, \bar{x}, \bar{t})>0\,.
\end{equation*}
By definition we have
\begin{multline*}
G(\tau, \bar{x}, \bar{t})-G(\tau_1, \bar{x}, \bar{t})=\int_{\tau_1}^{\tau}
\left[\bar{x}-u_b(\eta)(\bar{t}-\eta)\right]\rho_b(\eta)u_b(\eta)d\eta = \\
=\bar x\int_{\tau_1}^{\tau} \left[1-u_b(\eta)\frac{\bar{t}-\eta}{\bar{x}}\right]\rho_b(\eta)u_b(\eta)d\eta=\\
=\bar x\int_{\tau_1}^{\tau} \left[1-u_b(\eta)\frac{\bar{t}-\tau_1}{\bar{x}} -u_b (\eta) \frac{\tau_1-\eta}{\bar{x}}\right]\rho_b(\eta)u_b(\eta)d\eta\,.
\end{multline*}
Since $(\bar x,\bar t)$ lies on the line connecting $(x,t)$ and $(0,\tau_1)$ we conclude
\begin{multline*}
G(\tau, \bar{x}, \bar{t})-G(\tau_1, \bar{x}, \bar{t})=\bar x \int_{\tau_1}^{\tau} \left[1-u_b(\eta)\frac{t-\tau_1}{x} -u_b (\eta) \frac{\tau_1-\eta}{\bar{x}}\right]\rho_b(\eta)u_b(\eta)d\eta= \notag\\
=\bar x\int_{\tau_1}^{\tau} \left[1-u_b(\eta)\frac{t-\eta}{x}\right]\rho_b(\eta)u_b(\eta)d\eta+ \bar x\int_{\tau_1}^{\tau} u^2_b (\eta)\rho_b(\eta) (\tau_1 -\eta)\left[\frac{1}{x}-\frac{1}{\bar{x}}\right] d \eta\,.
\end{multline*}
Now the first term in the sum is $\tfrac{\bar x}{x}\left[G(\tau, x, t)-G(\tau_1, x, t)\right]$, which is non-negative by assumption. For the second term observe that $\bar x<x$. Thus it is strictly positive if $\tau_1<\tau$ but (considering the direction of integration) also if $\tau_1>\tau$.
\end{proof}
The minima of the initial and boundary potentials, respectively, were introduced in \eqref{e2.6} and \eqref{e2.5} as
\begin{equation*}
F(x,t)=\min_{y\in [0, \infty)}F(y,x,t)\,,
\end{equation*}
\begin{equation*}
G(x,t)=\min_{\tau\in[0,\infty)}G(\tau, x, t)\,.
\end{equation*}
Observe that for a fixed $t>0$ the function $F(x,t)$ is monotonically decreasing in $x$ while $G(x,t)$ is monotonically increasing.\\
\begin{lem}\label{l0}
The function $[0,\infty[\times[0,\infty[\rightarrow \mathbb{R}\colon  (x,t)\mapsto F(x,t)$ is locally Lipschitz continuous and the same holds for $G(x,t)$.
\end{lem}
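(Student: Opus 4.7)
The plan is to combine a local a priori bound on the minimizers with the standard envelope-type comparison that makes the value of a parametric minimization problem as regular as the integrand allows. First I would fix an arbitrary compact rectangle $K=[0,X]\times[0,T]$ and show that on $K$ the minimizers of $F(\cdot,x,t)$ and of $G(\cdot,x,t)$ lie in a bounded set. For $F$, if $\eta>y_0:=X+T\|u_0\|_\infty$ then $tu_0(\eta)+\eta-x>0$ uniformly for $(x,t)\in K$; since $\rho_0$ is strictly positive, $F(y,x,t)$ is strictly increasing in $y$ beyond $y_0$, so $y_*(x,t),y^*(x,t)\in[0,y_0]$. For $G$, whenever $\eta>t$ the integrand equals $[x+u_b(\eta)(\eta-t)]\rho_b(\eta)u_b(\eta)>0$, so $G(\tau,x,t)$ is strictly increasing in $\tau$ past $t$, giving $\tau_*(x,t),\tau^*(x,t)\in[0,T]$.

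With those bounds in hand, the Lipschitz estimate is a direct computation. Picking any two points $(x_i,t_i)\in K$, $i=1,2$, and letting $y_2$ be a minimizer of $F(\cdot,x_2,t_2)$, the definition of $F(x_1,t_1)$ as an infimum yields
\begin{align*}
F(x_1,t_1)-F(x_2,t_2)&\leq F(y_2,x_1,t_1)-F(y_2,x_2,t_2)\\
&=\int_0^{y_2}\bigl[(t_1-t_2)u_0(\eta)-(x_1-x_2)\bigr]\rho_0(\eta)\,d\eta\\
&\leq\bigl(\|u_0\|_\infty|t_1-t_2|+|x_1-x_2|\bigr)\int_0^{y_0}\rho_0(\eta)\,d\eta.
\end{align*}
Interchanging the two points gives the matching lower bound, and local boundedness of $\rho_0$ ensures $\int_0^{y_0}\rho_0<\infty$. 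The same argument applied to $G$, using a minimizer $\tau_2\in[0,T]$, yields
\[
|G(x_1,t_1)-G(x_2,t_2)|\leq \bigl(|x_1-x_2|+\|u_b\|_\infty|t_1-t_2|\bigr)\int_0^{T}\rho_b(\eta)u_b(\eta)\,d\eta,
\]
which is finite since $\rho_b$ is locally bounded and $u_b$ is bounded.

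The only nontrivial step is the a priori localization of the minimizers, which is precisely where the strict positivity of $\rho_0$ (respectively of $\rho_b u_b$) is used; without it a minimizing sequence could escape to infinity and $F$, $G$ could fail to be even continuous. Once the minimizers are confined to a compact interval depending only on $K$, the Lipschitz bound follows from the affine dependence of $F(y,x,t)$ and $G(\tau,x,t)$ on $(x,t)$, and this also explains why the Lipschitz constant on $K$ depends only on $\|u_0\|_\infty$, $\|u_b\|_\infty$, $\int_0^{y_0}\rho_0$ and $\int_0^T\rho_b u_b$.
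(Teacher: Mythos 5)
Your proof is correct and follows essentially the same route as the paper: both arguments compare $F(x_1,t_1)-F(x_2,t_2)$ by evaluating the potential at a minimizer for one of the points and using the affine dependence of $F(y,\cdot,\cdot)$ and $G(\tau,\cdot,\cdot)$ on $(x,t)$, with a Lipschitz constant controlled by $\int_0^{y_*}\rho_0$ resp.\ $\int_0^{\tau_*}\rho_b u_b$. The only difference is cosmetic: you vary $(x,t)$ simultaneously and spell out the a priori confinement of the minimizers to a compact set, which the paper simply invokes as local boundedness of $y_*(x,t)$.
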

\begin{proof}
Let $U$ be a bounded open subset of $[0,\infty[\times[0,\infty[$.  Since  $y_* (x, t)$ is  locally bounded in  $[0,\infty[\times[0,\infty[$,  there exists an  $M>0$   such that
\[
\Big|\int_0 ^{y_* (x, t)} \rho_0 (\eta) d\eta \Big| < M
\]
for all $(x,t)\in U$. For $(x_1, t), (x_2, t) \in U$ we have
\begin{multline*}
F(x_1 ,t)-F(x_2, t)= F(y_* (x_1,t), x_1, t)-  F(y_* (x_2,t), x_2, t)=\\
=  [F(y_* (x_1,t), x_1, t)- F(y_* (x_1,t), x_2, t)] +[ F(y_* (x_1,t), x_2, t)-  F(y_* (x_2,t), x_2, t)]\,.
\end{multline*}
Since the second term is non-negative we infer that
\[
F(x_1 ,t)-F(x_2, t)\geq(x_2-x_1) \int_0 ^{y_* (x_1, t)} \rho_0 (\eta) d\eta\,.
\]
Similarly we get
\[
F(x_1 ,t)-F(x_2, t) \leq  (x_2-x_1) \int_0 ^{y_* (x_2, t)} \rho_0 (\eta) d\eta\,.
\]
Combining the inequalities above results in
\[
| F(x_1 ,t)-F(x_2, t)| \leq M |x_1- x_2|\,.
\]
On the other hand varying $t$ we obtain in a similar manner for  $(x, t_1), (x, t_2) \in U$,
\[
| F(x ,t_1)-F(x, t_2)| \leq M |t_1- t_2|\,.
\]
Therefore, for $(x_1, t_1), (x_2, t_2) \in U$ we conclude
\[
| F(x_1 ,t_1)-F(x_2, t_2)| \leq M\big( |x_1- x_2|+ |t_1-t_2|\big)\,.
\]
Lipschitz continuity of $G$ can be checked in a similar manner.
\end{proof}

Next we define the characteristic triangle associated to a point $(x,t)$. We will later show that this triangle contains all the initial or boundary information, respectively, necessary to give the solution at point $(x,t)$.
\begin{definition}\label{d1}
Let $x\geq 0$, $t>0$ and $F(x,t)$, $G(x,t)$ be given by equations \eqref{e2.6}, \eqref{e2.5}.
\begin{enumerate}
\item[1.] For $F(x,t)< G(x,t)$ and $x>0$ we define the characteristic triangle at the point $(x,t)$ as the convex hull generated by the points $(x,t), (y_{*} (x,t), 0)$ and $(y^{*} (x,t), 0)$.\\
\item[2.] For $F(x,t)>G(x,t)$ we define the characteristic triangle at the point $(x,t)$ as the convex hull generated by the points $(x,t), (0, \tau_{*} (x,t),$ and $(0, \tau^{*} (x,t))$.\\
\item[3.] For $F(x,t)=G(x,t)$ we define the characteristic triangle at the point $(x,t)$ as the convex hull generated by the points
 $(x,t)$, $(y^{*} (x,t),0)$, $(0, \tau^{*} (x,t))$ and $(0,0)$.
\item[4.] For $x=0$ and $F(0,t)<G(0,t)$ we define the characteristic triangle as the convex hull generated by the points $(0,t)$, $(0,0)$ and $(y^{*} (x,t), 0)$.
\end{enumerate}
We denote the characteristic triangle associated with the point $(x,t)$ by $\Delta(x,t)$.
\end{definition}
Note that the characteristic triangle may collapse to a line segment or even to a single point (Case 2 with $x=0$). Figure~\ref{bildtriangle} serves as an illustration of possible cases for characteristic triangles.

\begin{figure}[htb]
\centering
\begin{tikzpicture}[scale=0.8]
\draw[->] (0,0) -- (9,0) node[anchor=north] {$x$};
\draw[->] (0,0) -- (0,6) node[anchor=east] {$t$};
\draw	(7,-0.05) node[anchor=north] {$\rho_0$, $u_0$};
\node [rotate=90] at (-.3,5) {$\rho_b$, $u_b$};
\draw (0,4) node[anchor=east]{$T$};
\draw[dashed] (0,4) -- (9,4);
\draw (2,0) node[anchor=north] {$ $}
			(1,4.5) node[anchor=south] {$G<F$}
			(3,4.5) node[anchor=south] {$G=F$}
			(5,4.5) node[anchor=south] {$G>F$};
\draw (1.5,4) node[anchor=south] {$x_1$};
\draw	(1.5,4) circle[radius=1.5pt];
\fill (1.5,4) circle[radius=1.5pt];
\draw[pattern=north west lines, pattern color=black] (0,2) to (1.5,4) to (0,3) to (0,2);
\draw (0,2) node[anchor=east] {$\tau_*(x_1,T)$};
\draw	(0,2) circle[radius=1.5pt];
\draw (0,3) node[anchor=east] {$\tau^*(x_1,T)$};
\draw	(0,3) circle[radius=1.5pt];
\draw (3,4) node[anchor=south] {$x_2$};
\draw	(3,4) circle[radius=1.5pt];
\fill (3,4) circle[radius=1.5pt];
\draw[pattern=horizontal lines, pattern color=black] (0,1) to (3,4) to (2,0) to (0,0) to (0,1);
\draw (0,1) node[anchor=east] {$\tau^*(x_2,T)$};
\draw	(0,1) circle[radius=1.5pt];
\node [anchor=center,rotate=90] at (2,-1) {$y^*(x_2,T)$};
\draw	(2,0) circle[radius=1.5pt];
\draw (5,4) node[anchor=south] {$x_3$};
\draw	(5,4) circle[radius=1.5pt];
\fill (5,4) circle[radius=1.5pt];
\draw[pattern=north east lines, pattern color=black] (3,0) to (5,4) to (5,0) to (3,0);
\node [anchor=center,rotate=90] at (3,-1) {$y_*(x_3,T)$};
\draw	(3,0) circle[radius=1.5pt];
\node [anchor=center,rotate=90] at (5,-1) {$y^*(x_3,T)$};
\draw	(5,0) circle[radius=1.5pt];
\end{tikzpicture}
\vspace*{-.3cm}\caption{Illustration of characteristic triangles}\label{bildtriangle}
\end{figure}
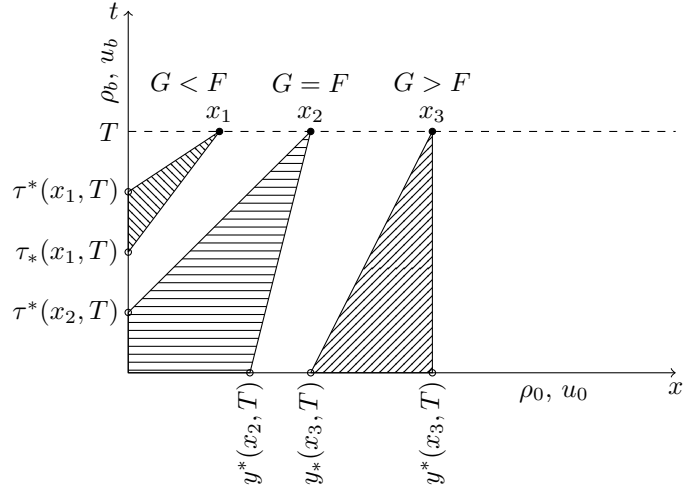
Note that $G=F$ can happen on an interval in $x$ for fixed $t$. Since however $F$ is decaying in $x$ and $G$ is increasing in $x$ this can only happen if both, $F$ and $G$, are constants. We denote this closed interval by
\[
I(t)=\{x|F(x,t)=G(x,t)\}=[l(t),r(t)]\,.
\]
The following Lemma gives a characterization of the set where $F=G$.
\begin{lem}\label{lemF=G}
With the notation as above, let $t$ be such that $\overset{\circ}{I}(t)\neq \emptyset$. Then:
\begin{enumerate}
\item For all $x\in I(t)$ it holds that $F(x,t)=G(x,t)=0$.
\item $\tau_*(x,t)=\tau^*(x,t)=0$ on $]l(t),r(t)]$ and $\tau_*(l(t),t)=0$.
\item $y_*(x,t)=y^*(x,t)=0$ on $[l(t),r(t)[$ and $y_*(r(t),t)=0$.
\item For all $t'<t$ we have $\overset{\circ}{I}(t')\neq 0$.
\item The set $\bigcup_{0\leq t'\leq t} I(t)$ is star-shaped with respect to $(0,0)$.
\end{enumerate}
\end{lem}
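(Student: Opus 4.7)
The plan is to prove the five assertions in order, reducing everything to the variational nature of $F$ and $G$ together with the monotonicities collected in Lemma~\ref{lnew}. The key inputs will be Remark~\ref{rem:const} for the constancy and uniqueness statements of (1)--(3), and Lemmas~\ref{l1} and \ref{l2} for the cone structure in (4)--(5).

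For (1), I would observe that on $I(t)=[l(t),r(t)]$ the function $F(\cdot,t)$ is monotonically decreasing and $G(\cdot,t)$ monotonically increasing, yet the two coincide there, so a simple pinch between the monotonicities forces both to be constant on $I(t)$. Since $\overset{\circ}{I}(t)\neq\emptyset$ by hypothesis, Remark~\ref{rem:const} applies and gives $G(x,t)=0$ on $I(t)$, whence $F=G=0$ on $I(t)$. For (2) and (3), the unifying observation is that $G(0,x,t)\equiv 0$ and $F(0,x,t)\equiv 0$. Combined with (1), this shows that $\tau=0$ and $y=0$ realize the minima of $G(\cdot,x,t)$ and $F(\cdot,x,t)$ respectively for every $x\in I(t)$. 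Since $\tau_*$ is the lowermost and $y_*$ the leftmost minimizer, I conclude $\tau_*(x,t)=0$ and $y_*(x,t)=0$ on all of $I(t)$, covering in particular the one-sided endpoint claims. For the uppermost/rightmost minimizers, the uniqueness clause of Remark~\ref{rem:const} applied with $x_1=l(t)$ and $x_2$ ranging over $\,]l(t),r(t)]$ gives $\tau^*(x,t)=0$ on $\,]l(t),r(t)]$. A symmetric argument for $F$, or alternatively the monotonicity relation $y^*(x_1,t)\le y_*(x_2,t)$ from Lemma~\ref{lnew}(2) paired with $y_*(x_2,t)=0$ for some $x_2\in\,]x,r(t)]$, yields $y^*(x,t)=0$ on $[l(t),r(t)[$.

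For (5), and hence (4), the core is geometric. Given any $x_0\in\overset{\circ}{I}(t)$, items (2) and (3) force the characteristic triangle at $(x_0,t)$ from Case~3 of Definition~\ref{d1} to collapse to the line segment joining $(0,0)$ and $(x_0,t)$, because $\tau^*(x_0,t)=0$ and $y^*(x_0,t)=0$. Lemma~\ref{l1} applied with minimizer $y(x_0,t)=0$ then shows that at every $(x',t')$ on this segment the minimizer of $F(\cdot,x',t')$ is uniquely $0$, so $F(x',t')=F(0,x',t')=0$; Lemma~\ref{l2} applied with $\tau_1=0$ symmetrically yields $G(x',t')=0$. Hence the whole segment lies in $\bigcup_{0\le s\le t}I(s)$. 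Taking the union over $x_0\in\overset{\circ}{I}(t)$ delivers the star-shapedness with respect to the origin asserted in (5). For (4), the scaling $\lambda=t'/t\in(0,1)$ shows that the non-degenerate interval $(\lambda l(t),\lambda r(t))$ is contained in $I(t')$, so $\overset{\circ}{I}(t')\neq\emptyset$.

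The main bookkeeping subtlety I anticipate lies in (2)--(3): Remark~\ref{rem:const} only furnishes uniqueness of the zero minimizer on a \emph{half-open} sub-interval of $I(t)$, so the statements at the opposite endpoints, namely $\tau_*(l(t),t)=0$ and $y_*(r(t),t)=0$, must be extracted separately from the fact that $\tau=0$ and $y=0$ always realize the value $0$, combined with the lowermost/leftmost definitions of $\tau_*$ and $y_*$. Once this distinction is cleanly made, the collapse-of-triangle argument via Lemmas~\ref{l1} and \ref{l2} in the final step should be essentially immediate.
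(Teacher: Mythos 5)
Your proposal is correct and follows essentially the same route as the paper: (1)--(3) via the monotone pinch, Remark~\ref{rem:const}, and the observation that $\tau=0$, $y=0$ are minimizers (with the same endpoint bookkeeping via Lemma~\ref{lnew}), and (4)--(5) via Lemmas~\ref{l1} and \ref{l2} applied along line segments to the origin. The only difference is cosmetic: you derive (5) first and scale to get (4), whereas the paper proves (4) with two segments from interior points and notes that (5) follows from the same argument.
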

\begin{proof}
(1) and (2) are direct consequences of Remark~\ref{rem:const}.\\
For the proof of (3) note that, since $F(x,t)=0$, clearly $y=0$ is a minimizer of $F(y,x,t)$ and thus $y_*(x,t)=0$ on $I(t)$. The statement for $y^*$ then follows from the second point in Lemma~\ref{lnew}.\\
To prove (4) let $x_1<x_2\in\overset{\circ}{I}(t)$ and consider the line segments joining $(x_i,t)$ and $(0,0)$. Now denote the points on these line segments at time $t'<t$ by $(x_1',t')$ and $(x_2',t')$. Then from Lemma~\ref{l1} and Lemma~\ref{l2}
\[
\tau_*(x_1',t')=\tau_*(x_2',t')=y_*(x_1',t')=y_*(x_2',t')=0\,,
\]
and thus $\forall x\in [x_1',x_2']\colon F(x,t')=G(x,t')$, leading to $\overset{\circ}{I}(t')\neq \emptyset$.\\
(5) follows from the proof of (4) immediately.
\end{proof}
\begin{corollary}\label{cor:lr}
If for some $t^\prime>0$ we have that $l(t^\prime)=r(t^\prime)$, then
\[
 \forall t>t^\prime\colon l(t)=r(t)\,.
\]
\end{corollary}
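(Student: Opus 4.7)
The plan is to derive the corollary directly as the contrapositive of Lemma~\ref{lemF=G}(4). First I would rephrase both the hypothesis and the conclusion in terms of the interior of $I$: since by definition $I(t)=[l(t),r(t)]$ with $l(t)\leq r(t)$, the statement $l(t)=r(t)$ is equivalent to $\overset{\circ}{I}(t)=\emptyset$. So the hypothesis reads $\overset{\circ}{I}(t')=\emptyset$, and the conclusion to be proved reads $\overset{\circ}{I}(t)=\emptyset$ for every $t>t'$.

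With this translation in hand, I would argue by contradiction: suppose some $t>t'$ satisfies $l(t)<r(t)$, i.e.\ $\overset{\circ}{I}(t)\neq\emptyset$. Lemma~\ref{lemF=G}(4), applied at this $t$, then yields $\overset{\circ}{I}(s)\neq\emptyset$ for every $s<t$; in particular, $\overset{\circ}{I}(t')\neq\emptyset$, which contradicts the hypothesis $l(t')=r(t')$. Hence no such $t$ exists and the corollary follows.

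Since all the genuine work has already been done inside Lemma~\ref{lemF=G}(4), whose proof transports nonemptiness of the interior backwards in time along line segments through the origin via Lemmas~\ref{l1} and \ref{l2}, no real obstacle remains here. The only point requiring a moment's thought is the bookkeeping equivalence $l(t)=r(t)\Leftrightarrow\overset{\circ}{I}(t)=\emptyset$, which is routine once one recalls the defining identity $I(t)=[l(t),r(t)]$.
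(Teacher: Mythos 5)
Your proof is correct and is exactly the intended argument: the paper states the corollary without a separate proof precisely because it is the contrapositive of Lemma~\ref{lemF=G}(4), which is what you spell out. The bookkeeping equivalence via $\overset{\circ}{I}(t)=\emptyset$ is the right way to handle the degenerate (or empty) cases, so nothing is missing.
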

Figure~\ref{bildrarefaction} illustrates the proof of Lemma~\ref{lemF=G} as well as the set G = F, including the characteristic triangles, in the situation of Corollary~\ref{cor:lr}.
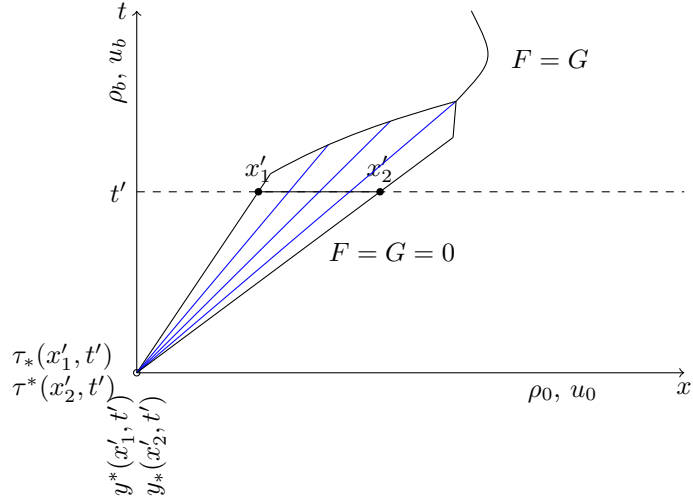
\begin{figure}[htb]
\centering
\begin{tikzpicture}[scale=0.8]
\draw[->] (0,0) -- (9,0) node[anchor=north] {$x$};
\draw[->] (0,0) -- (0,6) node[anchor=east] {$t$};
\draw	(7,-0.05) node[anchor=north] {$\rho_0$, $u_0$};
\node [rotate=90] at (-.3,5) {$\rho_b$, $u_b$};
\draw (0,3) node[anchor=east]{$t'$};
\draw[dashed] (0,3) -- (9,3);
\draw (2,3) -- (4,3);

\draw (2,3) node[anchor=south] {$x_1'$};
\draw	(2,3) circle[radius=1.5pt];
\fill (2,3) circle[radius=1.5pt];
\draw (4,3) node[anchor=south] {$x_2'$};
\draw	(4,3) circle[radius=1.5pt];
\fill (4,3) circle[radius=1.5pt];
\draw	(0,0) circle[radius=1.5pt];
\node [anchor=center] at (-0.8,0) {\parbox{2cm}{$\tau_*(x_1',t')$\\$\tau^*(x_2',t')$}};
\node [anchor=center,rotate=90] at (0,-0.8) {\parbox{2cm}{$y^*(x_1',t')$\\$y_*(x_2',t')$}};
\draw (2*1.1,3*1.1) -- (0,0);
\draw [blue] (2.5*1.26,3*1.26) -- (0,0);
\draw [blue] (3*1.39,3*1.39) -- (0,0);
\draw [blue] (3.5*1.5,3*1.5) -- (0,0);
\draw (4*1.3,3*1.3) -- (0,0);
\draw (3.5*1.5,3*1.5) -- (4*1.3,3*1.3);
\draw (2*1.1,3*1.1) .. controls (2.5*1.3,3*1.3) and (3*1.4,3*1.4) .. (3.5*1.5,3*1.5);
\draw (3.5*1.5,3*1.5) .. controls (5.9,5.2 ).. (5.5,6);
\draw (6,5.2) node[anchor=west] {$F=G$};
\draw (3,2) node[anchor=west] {$F=G=0$};
\end{tikzpicture}
\vspace*{-.3cm}
\caption{Characteristic triangles (blue lines) corresponding to a rarefaction wave emanating from the origin}\label{bildrarefaction}
\end{figure}

\noindent Our next goal is to show that, as anticipated in Figure~\ref{bildtriangle}, characteristic triangles associated with different positions at the same time do not intersect. For two triangles reaching only the initial data this is clear from Lemma~\ref{l1}. The same holds true if both triangles only reach the boundary data by Lemma~\ref{l2}. We still need to study the case when one of the triangles is as defined in points (3) or (4) of Definition~\ref{d1}. For this purpose we prove the following lemma.
\begin{lem}\label{lem3}
Let $t>0$ be fixed, and $x_1$, $x_2>0$, $x_1\neq x_2$ but arbitrary. Then the characteristic triangles associated to $(x_1,t)$ and $(x_2,t)$ do not intersect in the interior of $\mathbb{R}_+^2$.
\end{lem}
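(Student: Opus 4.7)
Assume without loss of generality that $x_1<x_2$. The plan is to show that at each height $\sigma\in(0,t)$ the rightmost point of $\Delta(x_1,t)$ (the largest $\xi$ with $(\xi,\sigma)\in\Delta(x_1,t)$) lies strictly to the left of the leftmost point of $\Delta(x_2,t)$. Since each triangle is convex, its horizontal slice at height $\sigma$ is a closed interval, and strict separation of these intervals on $\sigma\in(0,t)$ forces the intersection of the triangles to be disjoint from the interior of $\mathbb{R}_+^2$.

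I would begin with a case reduction based on the types of $(x_1,t)$ and $(x_2,t)$ from Definition~\ref{d1}. The monotonicity of $F(\cdot,t)$ (decreasing) and $G(\cdot,t)$ (increasing) in $x$ rules out all type combinations except the six pairs (1,1), (2,1), (2,2), (2,3), (3,1), (3,3). The pairs (1,1) and (2,2) are already settled in the paragraph preceding the lemma, via Lemmas~\ref{l1} and~\ref{l2} together with the monotonicity of minimizers $y^{*}(x_1,t)\le y_{*}(x_2,t)$ and $\tau_{*}(x_1,t)\ge \tau^{*}(x_2,t)$ from Lemma~\ref{lnew}. For each of the remaining mixed pairs (2,1), (2,3), (3,1), I would write out the rightmost edge of $\Delta(x_1,t)$ (the segment from $(x_1,t)$ to either $(y^{*}(x_1,t),0)$ or $(0,\tau_{*}(x_1,t))$, according to its type) and the leftmost edge of $\Delta(x_2,t)$ (to either $(y_{*}(x_2,t),0)$ or $(0,\tau^{*}(x_2,t))$), parametrize both as affine functions of $\sigma$, and verify using the monotonicity bounds from Lemma~\ref{lnew} combined with $x_1<x_2$ that their difference is nonnegative at both endpoints of the common $\sigma$-interval and strictly positive at $\sigma=t$. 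Affinity then forces strict positivity on the open interior, which is exactly the required separation.

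The genuinely delicate subcase is (3,3): the rightmost edge of $\Delta(x_1,t)$ terminates on the $x$-axis while the leftmost edge of $\Delta(x_2,t)$ terminates on the $t$-axis, and a direct affine comparison is not available because the difference at $\sigma=0$ is $-y^{*}(x_1,t)$, which could a priori be negative. I would resolve it through Lemma~\ref{lemF=G}: the equalities $F(x_i,t)=G(x_i,t)$ for $i=1,2$ combined with the opposite monotonicities force $F=G=0$ constant on $[x_1,x_2]\subset I(t)$; since then $x_1<r(t)$ and $x_2>l(t)$, the lemma yields $y^{*}(x_1,t)=0$ and $\tau^{*}(x_2,t)=0$. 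Both edges therefore collapse into the rays from $(x_i,t)$ to the origin, so the rightmost point of $\Delta(x_1,t)$ at height $\sigma>0$ equals $\sigma x_1/t$, strictly less than the leftmost point $\sigma x_2/t$ of $\Delta(x_2,t)$. This rarefaction subcase is the main obstacle: without the structural input of Lemma~\ref{lemF=G} the two relevant boundary edges would actually cross, so isolating the fact that both must terminate at the origin is the crux of the whole argument; every other subcase reduces to a routine affine comparison fed by the monotonicity of minimizers.
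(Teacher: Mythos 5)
Your proposal is correct, but it takes a genuinely different route from the paper's. The paper argues by contradiction at a hypothetical interior intersection point $p$: such a $p$ would lie on a characteristic segment of each triangle, and Lemmas~\ref{l1} and~\ref{l2} (uniqueness of the minimizer along such segments) then force two distinct minimizers of $F$ or $G$ at $p$; the case split is on the sign of $F-G$ at $x_2$, with the mixed pair of a boundary triangle at $x_1$ versus an initial-data triangle at $x_2$ settled only with the remark that intersection is ``not possible by definition.'' You instead avoid Lemmas~\ref{l1} and~\ref{l2} inside the proof altogether and separate the triangles directly, slicing at each height and comparing the rightmost edge of $\Delta(x_1,t)$ with the leftmost edge of $\Delta(x_2,t)$ as affine functions, fed only by the minimizer orderings $y^*(x_1,t)\leq y_*(x_2,t)$ and $\tau_*(x_1,t)\geq\tau^*(x_2,t)$ of Lemma~\ref{lnew}, and by Lemma~\ref{lemF=G} in the $(3,3)$ case to force both separating edges into the segment through the origin; your list of admissible type pairs is the right one, and the endpoint checks do close in every case. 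One point deserves an explicit sentence: when $\Delta(x_2,t)$ hugs the $t$-axis (pairs $(2,2)$ and $(2,3)$ with $\tau_*(x_1,t)=\tau^*(x_2,t)$), the two triangles may still share the axis point at that height, so the separation at heights in $(0,t)$ is only non-strict there; this is harmless because that point is not interior and $\Delta(x_1,t)$ is empty below that height, but as stated your opening claim of strict slice separation on all of $(0,t)$ is slightly too strong. As for what each approach buys: the paper's proof is shorter per case because the heavy lifting sits in the uniqueness lemmas, while yours is more elementary coordinate geometry that treats the mixed pairs uniformly and transparently (in particular the boundary-versus-initial pair the paper dispatches by fiat) and makes the rarefaction case explicit; the price is the bookkeeping of identifying the correct extreme edge of each triangle type (rightmost edge to the lower vertex $(0,\tau_*)$, leftmost edge to the upper vertex $(0,\tau^*)$) and the boundary corner case just mentioned.
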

\begin{proof}
Let $t$ be fixed and assume w.l.o.g. that $x_2>x_1$. Since $F(x,t)$ is decreasing and $G(x,t)$ is increasing in $x$ we only have the following cases:\\
\underline{Case 1}, ${G(x_2,t)<F(x_2,t)}$: Then automatically also $G(x_1,t)<F(x_1,t)$. Now assume that the two characteristic triangles intersect.
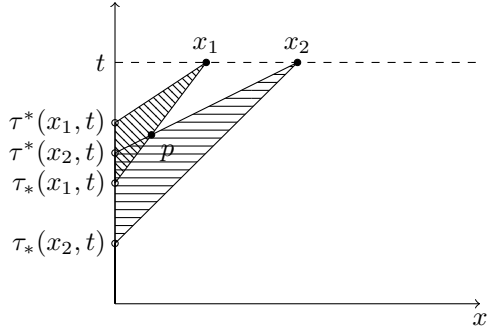
\begin{figure}[htb]
\centering
\begin{tikzpicture}[scale=0.8]
\draw[->] (0,0) -- (6,0) node[anchor=north] {$x$};
\draw[->] (0,0) -- (0,5) ;
\draw (0,4) node[anchor=east] {$t$};
\draw[dashed] (0,4) -- (6,4);
\draw (1.5,4) node[anchor=south] {$x_1$};
\draw	(1.5,4) circle[radius=1.5pt];
\fill (1.5,4) circle[radius=1.5pt];
\draw[pattern=north west lines, pattern color=black] (0,2) to (1.5,4) to (0,3) to (0,2);
\draw (0,2) node[anchor=east] {$\tau_*(x_1,t)$};
\draw	(0,2) circle[radius=1.5pt];
\draw (0,3) node[anchor=east] {$\tau^*(x_1,t)$};
\draw	(0,3) circle[radius=1.5pt];
\draw (3,4) node[anchor=south] {$x_2$};
\draw	(3,4) circle[radius=1.5pt];
\fill (3,4) circle[radius=1.5pt];
\draw[pattern=horizontal lines, pattern color=black] (0,2.5) to (3,4) to (0,1) to (0,0) to (0,2.5);
\draw (0,2.5) node[anchor=east] {$\tau^*(x_2,t)$};
\draw	(0,2.5) circle[radius=1.5pt];
\draw (0,1) node[anchor=east] {$\tau_*(x_2,t)$};
\draw	(0,1) circle[radius=1.5pt];
\draw (0.6,2.8) circle[radius=1.5pt];
\fill (0.6,2.8) circle[radius=1.5pt];
\draw (0.6,2.8) node[anchor=north west] {$p$};
\end{tikzpicture}
\caption{Intersecting triangles for boundary potential}\label{bildboundary}
\end{figure}
Since the point of intersection $p=(x_p,t_p)$ lies on the line segment joining $(x_1,t)$ to $(0,\tau_*(x_1,t))$ we know from Lemma~\ref{l2}, that $G(\tau,x_p,t_p)$ attains its minimum for $\tau=\tau_*(x_1,t)$. Since $p$ is also on the line segment joining $(x_2,t)$ to $(0,\tau^*(x_2,t))$ we have that $G(\tau^*(x_2,t),x_p,t_p)=G(\tau_*(x_1,t),x_p,t_p)$. This however contradicts Lemma~\ref{l2}, because the minimizer is not unique.\\
\underline{Case 2}, ${G(x_2,t)=F(x_2,t)}$: If also $G(x_1,t)=F(x_1,t)$ then on an interval the characteristic triangles are lines and do not intersect (see Figure~\ref{bildrarefaction}). Otherwise, if $G(x_1,t)\neq F(x_1,t)$, denote by $y_2$ a minimizer $F(x_2,t)=F(y_2,x_2,t)$ and by $\tau_2$ one with $G(x_2,t)=G(\tau_2,x_2,t)$. Then we have for $y>0$ arbitrary
\[
F(y,x_1,t)>F(y,x_2,t)\geq F(y_2,x_2,t)=G(\tau_2,x_2,t)>G(\tau_2,x_1,t)\,,
\]
and we conclude $G(x_1,t)<F(x_1,t)$. Thus we are in the situation depicted in Figure~\ref{bildinter}.
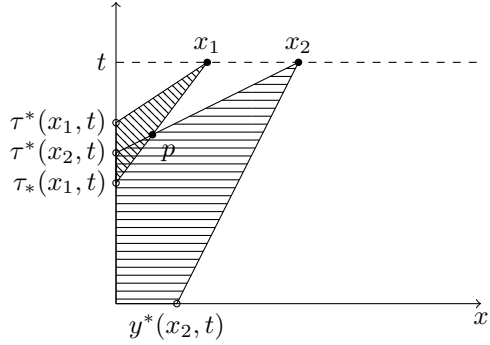
\begin{figure}[h!]
\centering
\begin{tikzpicture}[scale=0.8]
\draw[->] (0,0) -- (6,0) node[anchor=north] {$x$};
\draw[->] (0,0) -- (0,5) ;
\draw (0,4) node[anchor=east] {$t$};
\draw[dashed] (0,4) -- (6,4);
\draw (1.5,4) node[anchor=south] {$x_1$};
\draw	(1.5,4) circle[radius=1.5pt];
\fill (1.5,4) circle[radius=1.5pt];
\draw[pattern=north west lines, pattern color=black] (0,2) to (1.5,4) to (0,3) to (0,2);
\draw (0,2) node[anchor=east] {$\tau_*(x_1,t)$};
\draw	(0,2) circle[radius=1.5pt];
\draw (0,3) node[anchor=east] {$\tau^*(x_1,t)$};
\draw	(0,3) circle[radius=1.5pt];
\draw (3,4) node[anchor=south] {$x_2$};
\draw	(3,4) circle[radius=1.5pt];
\fill (3,4) circle[radius=1.5pt];
\draw[pattern=horizontal lines, pattern color=black] (0,2.5) to (3,4) to (1,0) to (0,0) to (0,2.5);
\draw (0,2.5) node[anchor=east] {$\tau^*(x_2,t)$};
\draw	(0,2.5) circle[radius=1.5pt];
\draw (1,0) node[anchor=north] {$y^*(x_2,t)$};
\draw	(1,0) circle[radius=1.5pt];
\draw (0.6,2.8) circle[radius=1.5pt];
\fill (0.6,2.8) circle[radius=1.5pt];
\draw (0.6,2.8) node[anchor=north west] {$p$};
\end{tikzpicture}
\caption{Intersecting triangles}\label{bildinter}
\end{figure}
Again we can conclude as in Case 1 that an intersection is impossible.\\
\underline{Case 3}, ${G(x_2,t)>F(x_2,t)}$:
Here we distinguish three more cases, namely
\begin{enumerate}
\item $G(x_1,t)>F(x_1,t)$. This case is the same as Case 1 but using Lemma~\ref{l1}.
\item $G(x_1,t)=F(x_1,t)$. The argument follows along the lines of Case 2, using Lemma~\ref{l1}.
\item $G(x_1,t)<F(x_1,t)$. Here intersection is not possible by definition.
\end{enumerate}
\end{proof}

\begin{lem}\label{lem4}
Let $t>0$ be fixed, and $x_1>0$. Then the characteristic triangles associated to $(0,t)$ and $(x_1,t)$ do not intersect in the interior of $\mathbb{R}_+^2$.
\end{lem}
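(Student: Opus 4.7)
The plan is to distinguish the two cases $F(0,t) > G(0,t)$ and $F(0,t) \leq G(0,t)$.

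In the first case, Definition~\ref{d1}(2) describes $\Delta(0,t)$ as the convex hull of $(0,t)$, $(0,\tau_*(0,t))$ and $(0,\tau^*(0,t))$, and Lemma~\ref{lnew}(3) gives $\tau_*(0,t) = \tau^*(0,t) = t$, so $\Delta(0,t)$ collapses to the single point $\{(0,t)\}$ on the $t$-axis and the claim is vacuous.

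In the second case, the first step is to show that necessarily $F(x_1,t) < G(x_1,t)$ (strictly). From the monotonicity of $F$ (decreasing in $x$) and of $G$ (increasing in $x$) we obtain the chain $F(x_1,t) \leq F(0,t) \leq G(0,t) \leq G(x_1,t)$. If equality held throughout, both $F$ and $G$ would be constant on $[0,x_1]$, placing $[0,x_1] \subseteq I(t)$ and giving $\overset{\circ}{I}(t) \neq \emptyset$; Lemma~\ref{lemF=G}(1) would then force $G(0,t) = 0$, contradicting the direct estimate
\[
G(0,t) \leq G(t,0,t) = -\int_0^t (t-\eta)\,\rho_b(\eta)\,u_b(\eta)^2\, d\eta < 0
\]
implied by the strict positivity of $u_b$ and $\rho_b$. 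Therefore $F(x_1,t) < G(x_1,t)$, and Definition~\ref{d1}(1) exhibits $\Delta(x_1,t)$ as the triangle with vertices $(x_1,t)$, $(y_*(x_1,t),0)$, $(y^*(x_1,t),0)$, while Definition~\ref{d1}(3) or~(4), together with $\tau^*(0,t) = t$, exhibits $\Delta(0,t)$ as the (possibly degenerate) triangle with vertices $(0,0)$, $(0,t)$, $(y^*(0,t),0)$.

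The geometric conclusion now follows from Lemma~\ref{lnew}(2), which gives $y^*(0,t) \leq y_*(x_1,t)$. Denote by $H_0$ the segment from $(0,t)$ to $(y^*(0,t),0)$, which bounds $\Delta(0,t)$ on the right, and by $L_1$ the segment from $(x_1,t)$ to $(y_*(x_1,t),0)$, which bounds $\Delta(x_1,t)$ on the left. Linear interpolation gives
\[
x_{L_1}(t') - x_{H_0}(t') = \bigl(y_*(x_1,t)-y^*(0,t)\bigr)\bigl(1-t'/t\bigr) + x_1\, t'/t,
\]
non-negative for $t'\in[0,t]$ and strictly positive for $t' > 0$. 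Since the interior of $\Delta(0,t)$ lies strictly to the left of $H_0$ and the interior of $\Delta(x_1,t)$ strictly to the right of $L_1$, the two triangles cannot intersect in the interior of $\mathbb{R}_+^2$. The main obstacle is the exclusion of the equality $F(x_1,t) = G(x_1,t)$ under the hypothesis $F(0,t)\leq G(0,t)$; this step forces one to combine the rarefaction description of $I(t)$ from Lemma~\ref{lemF=G} with the explicit negativity of $G(0,t)$, after which the geometric separation parallels the interior argument already used in the proof of Lemma~\ref{lem3}.
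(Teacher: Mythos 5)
Your proof is correct and takes essentially the same route as the paper: monotonicity of $F$ and $G$ in $x$ reduces the non-trivial situation to the one where $\Delta(x_1,t)$ is an initial-data triangle, and the separation then follows from $y^*(0,t)\leq y_*(x_1,t)$ given by Lemma~\ref{lnew}(2). The only difference is bookkeeping: the paper dismisses $F(0,t)\geq G(0,t)$ as trivial, whereas you treat the borderline case $F(0,t)=G(0,t)$ explicitly by ruling out $F(x_1,t)=G(x_1,t)$ through Lemma~\ref{lemF=G} and the strict negativity of $G(0,t)$, and you spell out the geometric separation by linear interpolation — welcome extra detail, but not a different argument.
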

\begin{proof}
The proof is trivial if $F(0,t)\geq G(0,t)$. Thus let us assume that  $F(0,t)<G(0,t)$. Then, by monotonicity, we have $F(x,t)<G(x,t)$ for all $x>0$. Thus the characteristic triangle at $(x_1,t)$ is the convex hull of $(x_1,t)$, $(y_*(x_1,t),0)$ and $(y^*(x_1,t),0)$. Since by point (2) of Lemma~\ref{lnew} we have $y^*(0,t)\leq y_*(x_1,t)$ the assertion follows.
\end{proof}
Now we are in the position to state the first theorem that combines the results for fixed $t$ and the lemmas before.
\begin{theorem}\label{l4}
If two characteristic triangles intersect in $\mathbb{R}_+^2$, then one is contained in the other. Moreover, if they intersect on the boundary in more than one point, then also one has to be contained in the other.
\end{theorem}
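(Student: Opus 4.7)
The plan is to reduce the general statement to the same-height Lemmas~\ref{lem3} and~\ref{lem4} by slicing at the lower of the two apex heights. Write $\Delta_i=\Delta(x_i,t_i)$ and assume, without loss of generality, $t_1\leq t_2$. If $t_1=t_2$ and $x_1=x_2$, the triangles coincide; otherwise Lemmas~\ref{lem3} and~\ref{lem4} forbid any interior intersection, so we may assume $t_1<t_2$ and fix an intersection point $p=(x_p,t_p)$ with $x_p,t_p>0$. From $p\in\Delta_i$ we have $0<t_p\leq t_1<t_2$.

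Since $\Delta_2$ is the convex hull of $(x_2,t_2)$ and a base lying on the axes, the ray from $(x_2,t_2)$ through $p$ meets that base in a uniquely determined point $B$, which is either of the form $(b,0)$ with $b\in[y_*(x_2,t_2),y^*(x_2,t_2)]$ or of the form $(0,\tau^\circ)$ with $\tau^\circ\in[\tau_*(x_2,t_2),\tau^*(x_2,t_2)]$. Let $q$ be the intersection of the segment from $B$ to $(x_2,t_2)$ with the horizontal line $\{t=t_1\}$. Applying Lemma~\ref{l1} or Lemma~\ref{l2} to this segment shows that the label of $B$ is the \emph{unique} minimizer of the corresponding potential at $q$, so that $\Delta(q)$ collapses to the segment from $q$ to $B$. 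Since the $t$-ordering along this segment is $B,\,p,\,q,\,(x_2,t_2)$, we obtain $p\in\Delta(q)\cap\Delta_1$.

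The key step is to identify $q$ with $(x_1,t_1)$. Otherwise $\Delta(q)$ and $\Delta_1$ would be characteristic triangles attached to two distinct points at the same height $t_1$ whose intersection contains the interior point $p\in\mathbb{R}_+^2$, contradicting Lemma~\ref{lem3} or Lemma~\ref{lem4}. Hence $(x_1,t_1)=q$ lies on the segment from $B$ to $(x_2,t_2)$, so in particular $(x_1,t_1)\in\Delta_2$. A final application of Lemma~\ref{l1} or~\ref{l2} at the apex $(x_1,t_1)$ forces $B$ to be the unique minimizer there as well, so that $\Delta_1$ itself degenerates to the segment from $(x_1,t_1)$ to $B$, which lies inside the segment from $(x_2,t_2)$ to $B$ and hence in $\Delta_2$; this yields $\Delta_1\subset\Delta_2$.

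The main obstacle will be the bookkeeping across the four cases of Definition~\ref{d1}: when $\Delta_2$ or $\Delta(q)$ is of type~3 (a quadrilateral) or type~4, one has to verify that the segment from the apex to $B$ really lies inside the characteristic triangle, invoking Lemma~\ref{lemF=G} to handle the collapse of the minimizer intervals on the rarefaction set $F=G$. For the second assertion of the theorem, if $\Delta_1\cap\Delta_2$ contains more than one point of $\partial\mathbb{R}_+^2$, convexity yields a nontrivial common segment on one axis; Lemmas~\ref{l1} and~\ref{l2} then force the two apex-to-base segments of $\Delta_1$ and $\Delta_2$ to share a common characteristic running through that segment, and the same degeneration argument as above delivers the containment.
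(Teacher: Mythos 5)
The decisive step of your argument is invalid. You apply Lemmas~\ref{l1} and~\ref{l2} to the segment joining the apex $(x_2,t_2)$ to the point $B$ where the ray through $p$ meets the base of $\Delta_2$. Those lemmas, however, only concern segments joining the apex to a point whose label actually \emph{minimizes} $F(\cdot,x_2,t_2)$ resp.\ $G(\cdot,x_2,t_2)$; by Definition~\ref{d1} the base of $\Delta_2$ is the convex hull of the extreme minimizers $y_*,y^*$ (resp.\ $\tau_*,\tau^*$), and interior points of that base are in general \emph{not} minimizers. Hence nothing forces the label of $B$ to be the unique minimizer at $q$, $\Delta(q)$ does not collapse to the segment from $q$ to $B$, the inclusion $p\in\Delta(q)$ fails, and therefore the pivotal identification $q=(x_1,t_1)$ and the final conclusion that $\Delta_1$ degenerates to a segment are both false. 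A concrete counterexample uses the data of Figure~\ref{fig:raref} ($\rho_0\equiv 1$, $u_0=2$ for $x<2$, $u_0=-2$ for $x>2$, $\rho_b=u_b=1$): here $\Delta_2=\Delta(2,1)$ is the hull of $(2,1),(0,0),(4,0)$ and $\Delta_1=\Delta(2,\tfrac12)$ is the hull of $(2,\tfrac12),(1,0),(3,0)$; they intersect in interior points and indeed $\Delta_1\subset\Delta_2$, yet both are nondegenerate. Taking the intersection point $p=(\tfrac32,\tfrac14)$, the ray from $(2,1)$ through $p$ meets the base at $B=(\tfrac43,0)$, which is not a minimizer of $F(\cdot,2,1)$ (the minimizers are $y=0$ and $y=4$); one finds $q=(\tfrac53,\tfrac12)\neq(2,\tfrac12)$, and $\Delta(q)$ is the segment from $(\tfrac53,\tfrac12)$ to $(\tfrac23,0)$, which neither ends at $B$ nor contains $p$. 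So your chain of deductions, if valid, would prove statements contradicted by the simplest delta-shock configuration; the error is precisely the unwarranted use of Lemmas~\ref{l1} and~\ref{l2} along a ray to a non-minimizing base point.

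The mechanism the theorem actually rests on (and what the paper's terse proof appeals to) is that the \emph{lateral edges} of a characteristic triangle run from the apex to genuine minimizers $y_*,y^*,\tau_*,\tau^*$. If neither triangle contained the other while they met at a point of the open quadrant, then, since the remaining edges lie on the coordinate axes, convexity would force a lateral edge of one to cross a lateral edge of the other in the open quadrant, and Lemmas~\ref{l1} and~\ref{l2}, used exactly as in the proofs of Lemmas~\ref{lem3} and~\ref{lem4} (including the mixed $F$/$G$ comparison of Case~2 there), rule out such a crossing. Your idea of slicing at the level $t_1$ can be salvaged along these lines, e.g.\ by tracking where the lateral edges of $\Delta_2$ meet $\{t=t_1\}$ and invoking Remark~\ref{rem:limitstaustar} and Lemma~\ref{l5}, but not via the ray through an arbitrary interior intersection point. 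Finally, in the boundary statement your claim that two common boundary points yield ``a nontrivial common segment on one axis'' is inaccurate when the two points lie on different axes; in that case the chord between them passes through the open quadrant and the first part of the theorem applies instead.
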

\begin{proof}
The result follows from the direct application of Lemmas~\ref{l1}--\ref{lem4}.
\end{proof}
The properties of characteristic triangles established so far allow us to derive additional properties of $\tau_*,\tau^*,y_*,y^*$, which will be used frequently later. We collect them in a remark.
\begin{remark}\label{rem:limitstaustar}
(a) At fixed $t$, the function $x\to \tau_*(x,t)$ is right continuous and $x\to \tau^*(x,t)$ is left continuous. Further, $\tau_*(x+,t) = \tau_*(x,t) = \tau^*(x+,t)$ for all $x\geq 0$ and $\tau_*(x-,t) = \tau^*(x,t) = \tau^*(x-,t)$ for $x>0$.

(b) At fixed $t$, the function $x\to y_*(x,t)$ is left continuous and $x\to y^*(x,t)$ is right continuous. Further, $y_*(x+,t) = y^*(x,t) = y^*(x+,t)$ for all $x\geq 0$.

Indeed, to verify (a), combine the semicontinuity properties stated in Lemma~\ref{lnew} with the fact that $\tau(x,t)$ is decreasing in $x$. In particular, $\tau_*(x,t) = \tau_*(x+,t)$. By Lemma~\ref{lem3}, $\tau^*(x+,t) \leq \tau_*(x,t)$. On the other hand, $\tau_*(x+,t) \leq \tau^*(x+,t)$. Combining the inequalities leads to $\tau_*(x+,t) = \tau_*(x,t) = \tau^*(x+,t)$. The second assertion and item (b) is proved in the same way.
\end{remark}
The following lemma states that the domain of interest is indeed covered by characteristic triangles.
\begin{lem}\label{l5}
For any time $t_0 >0$ we have
\[
\bigcup_{x \in [0, \infty)}\Delta(x, t_0)=\{(x,t)| x \in [0, \infty), 0 \leq t \leq t_0\}\,.
\]
\end{lem}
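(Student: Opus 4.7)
The forward inclusion $\bigcup_{x\geq 0}\Delta(x,t_0)\subseteq\{(x,t):x\geq 0,\,0\leq t\leq t_0\}$ is immediate from Definition~\ref{d1}: every vertex of every characteristic triangle lies in this strip, and each triangle is its convex hull. So the content is the reverse inclusion.

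Fix $(x',t')$ with $x'\geq 0$ and $0\leq t'\leq t_0$; the case $t'=t_0$ being trivial (take $x=x'$), assume $t'<t_0$. My plan is a sweeping argument. For each $x\geq 0$ the horizontal slice $S(x):=\Delta(x,t_0)\cap\{t=t'\}$ is a closed, possibly empty or degenerate, interval by convexity; denote its endpoints $\ell(x)\leq r(x)$. I plan to verify four properties of $\ell$ and $r$ and then conclude by a one-parameter sweep over $x$. \emph{First,} $r$ is non-decreasing in $x$: by Theorem~\ref{l4} two triangles with distinct apices $(x_1,t_0)\neq(x_2,t_0)$ on the top edge cannot contain one another, so their interiors are disjoint and their slices at the intermediate level $t'$ are ordered. \emph{Second,} $r(x)\to\infty$ as $x\to\infty$: the $L^\infty$-bounds on $u_0$ and $u_b$ make the slopes of the edges of $\Delta(x,t_0)$ uniformly bounded, so $|r(x)-x|$ stays bounded on any fixed time interval. \emph{Third,} $r$ is right-continuous, $\ell$ is left-continuous, and crucially $r(x-)=\ell(x)$ for every $x>0$; this rests on expressing $\ell,\,r$ explicitly in terms of $y_*,\,y^*,\,\tau_*,\,\tau^*$ and invoking the one-sided limit identities $y^*(x-,t_0)=y_*(x,t_0)$ and $\tau_*(x-,t_0)=\tau^*(x,t_0)$ from Remark~\ref{rem:limitstaustar}, combined with the Lipschitz continuity of $F,\,G$ (Lemma~\ref{l0}) to control the transitions between the four cases of Definition~\ref{d1}. \emph{Fourth,} $\ell(x)\to 0$ as $x\to 0^+$: if $F(0,t_0)\leq G(0,t_0)$ then $\Delta(0,t_0)$ contains the segment $\{0\}\times[0,t_0]$ (case~3 or 4 of Definition~\ref{d1}) and $(0,t')\in S(0)$ directly; if instead $F(0,t_0)>G(0,t_0)$ then by Remark~\ref{rem:limitstaustar}(a) together with Lemma~\ref{lnew}(3) one has $\tau^*(x,t_0)\to t_0$ as $x\to 0^+$, which forces $t'<\tau^*(x,t_0)$ and hence $\ell(x)=0$ on a right-neighbourhood of $0$.

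Given these four properties the conclusion is immediate: set $x^*:=\inf\{x\geq 0 : r(x)\geq x'\}$, a finite non-negative number by the second property. Right-continuity of $r$ yields $r(x^*)\geq x'$, and the matching $r(x-)=\ell(x)$ combined with the fourth property yields $\ell(x^*)\leq x'$; hence $x'\in[\ell(x^*),r(x^*)]=S(x^*)$ and $(x',t')\in\Delta(x^*,t_0)$, as required. The main obstacle is the third property, in particular the identity $r(x-)=\ell(x)$ at values of $x$ where the type of the characteristic triangle in Definition~\ref{d1} changes, that is, where $F(x,t_0)-G(x,t_0)$ changes sign, or where $\{F=G\}$ contains an interval corresponding to a rarefaction emanating from the origin. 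Here the natural tool is the structural description of the set $\{F=G\}$ in Lemma~\ref{lemF=G}, which allows one to compute the slices explicitly on rarefaction intervals and verify the matching by direct inspection.
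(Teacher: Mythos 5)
Your forward inclusion and the general sweeping idea are reasonable, and in fact close in spirit to the paper's own proof (which also takes an infimum over apex positions at time $t_0$ and exploits the one-sided limits of Remark~\ref{rem:limitstaustar}). But there is a genuine gap in your properties (3) and (4), and it sits exactly where the lemma actually requires work. When $\Delta(x,t_0)$ is of boundary type (case 2 of Definition~\ref{d1}), its lowest point is $(0,\tau_*(x,t_0))$, so the slice $S(x)$ at level $t'$ is \emph{empty} whenever $\tau_*(x,t_0)>t'$; thus $\ell$ and $r$ are undefined on a whole interval of $x$. Your justification of (4) is then incorrect: if $F(0,t_0)>G(0,t_0)$, then not only $\tau^*(x,t_0)\to t_0$ but also $\tau_*(x,t_0)\to t_0$ as $x\to 0^+$ (Remark~\ref{rem:limitstaustar}(a) with Lemma~\ref{lnew}(3)), so for all small $x>0$ one has $\tau_*(x,t_0)>t'$ and $S(x)=\emptyset$. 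The condition $t'<\tau^*(x,t_0)$ alone does not give $\ell(x)=0$; you need $\tau_*(x,t_0)\le t'\le\tau^*(x,t_0)$, i.e.\ $(0,t')\in\Delta(x,t_0)$. For the same reason the matching $r(x-)=\ell(x)$ is false (indeed undefined) at the left edge of the set where slices are nonempty --- for instance at the apex lying on the trajectory of a Dirac delta that has detached from the boundary (Figure~\ref{bildInBd}), where the slice jumps from empty to an interval of the form $[0,r(x)]$. Consequently your key step ``$\ell(x^*)\le x'$'' is unproved precisely when $x^*$ is this left edge, which is the main situation the lemma must handle (covering points near the $t$-axis at times $t'<t_0$). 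A smaller point: Remark~\ref{rem:limitstaustar}(b) states $y_*(x+,t)=y^*(x,t)$, not the left-limit identity $y^*(x-,t)=y_*(x,t)$ you quote; the latter is true but needs the analogous one-line argument.

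The repair is essentially the paper's argument. In the boundary regime set $x^*:=\inf\{z:\tau_*(z,t_0)\le t'\}$; right continuity of $\tau_*$ gives $\tau_*(x^*,t_0)\le t'$, while the left-limit identity $\tau_*(x^*-,t_0)=\tau^*(x^*,t_0)$ gives $\tau^*(x^*,t_0)\ge t'$, so $(0,t')\in\Delta(x^*,t_0)$ and the slice at $x^*$ reaches down to $0$, i.e.\ $\ell(x^*)=0$. The paper phrases this with $x_1=\inf\{z:\tau^*(z,t_0)\le\tau_*(x,t)\}$ for the given target point $(x,t)$ and concludes the stronger statement $\Delta(x,t)\subset\Delta(x_1,t_0)$, treating the case where $\{F=G\}$ has nonempty interior via Lemma~\ref{lemF=G} (as you propose) and the case $I(t_0)=\emptyset$ via $\Delta(0,t_0)$. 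With your (3)--(4) replaced by this dichotomy --- either the matching $r(x-)=\ell(x)$ holds, or the slices immediately to the left are empty and $\ell(x)=0$ --- your sweep goes through; properties (1), (2) and the right continuity of $r$ are fine as sketched.
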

\begin{proof}\underline{Case $I(t_0)\neq\emptyset$:}
Assume first that $I(t_0)$ consists of the single point $x_0 = l(t_0) = r(t_0)$. Let $(x,t)$ be a point which lies left of $\Delta(x_0, t_0)$ (see Figure~\ref{bildl5}). Consider points $(z,t_0)$ on the horizontal line segment joining $(0,t_0)$ with $(x_0,t_0)$. As $z$ decreases to $0$, both $\tau_*(z,t_0)$ and $\tau^*(z,t_0)$ converge to $(0,t_0)$ (Remark \ref{rem:limitstaustar}). Let
\[
    x_1 = \inf\{z: \tau^*(z,t_0) \leq \tau_*(x,t)\}.
\]
Then $\tau^*(x_1+,t_0) \leq \tau_*(x,t)$, and by Remark~\ref{rem:limitstaustar}, $\tau^*(x_1+,t_0) = \tau_*(x_1,t_0)$, so that $\tau_*(x_1,t_0) \leq \tau_*(x,t)$. Whenever $z < x_1$, we have $\tau^*(z,t_0) >\tau_*(x,t)$ and consequently, by the non-intersection property, $\tau^*(z,t_0) \geq \tau^*(x,t)$ as well. Again by Remark~\ref{rem:limitstaustar}, $\tau^*(x_1,t_0) = \tau^*(x_1-,t_0) \geq \tau^*(x,t)$. Thus $\Delta(x,t)\subset\Delta(x_1,t_0)$, as desired.

\begin{figure}[htb]
\centering
\begin{tikzpicture}[scale=1]
\draw[->] (0,0) -- (6,0) node[anchor=north] {$x$};
\draw[->] (0,0) -- (0,6) node[anchor=east] {$t$};
\draw	(4.5,-0.05) node[anchor=north] {$\rho_0$, $u_0$};
\node [rotate=90] at (-.3,5) {$\rho_b$, $u_b$};
\draw (0,4) node[anchor=east]{$t_0$};
\draw (0,3.5) node[anchor=east]{$t$};
\draw[dashed] (0,4) -- (6,4);
\draw[dashed] (0,3.5) -- (6,3.5);
\draw[->] (2,4) -- (1.5,4);
\draw (2,4) node[anchor=south] {$z$};
\draw	(2,4) circle[radius=1pt];
\fill (2,4) circle[radius=1pt];
\draw (1,3.5) node[anchor=south] {$x$};
\draw	(1,3.5) circle[radius=1.5pt];
\fill (1,3.5) circle[radius=1.5pt];
\draw[pattern=north west lines, pattern color=black] (0,2) to (1,3.5) to (0,3) to (0,2);

\draw (0,2) node[anchor=east] {$\tau_*(x,t)$};
\draw	(0,2) circle[radius=1.5pt];
\draw (0,3) node[anchor=east] {$\tau^*(x,t)$};
\draw	(0,3) circle[radius=1.5pt];
\draw (3,4) node[anchor=south] {$x_0$};
\draw	(3,4) circle[radius=1.5pt];
\fill (3,4) circle[radius=1.5pt];
\draw[pattern=horizontal lines, pattern color=black] (0,1) to (3,4) to (2,0) to (0,0) to (0,1);
\draw (0,1) node[anchor=east] {$\tau^*(x_0,t_0)$};
\draw	(0,1) circle[radius=1.5pt];
\node [anchor=north] at (2,0) {$y^*(x_0,t_0)$};
\draw	(2,0) circle[radius=1.5pt];
\end{tikzpicture}
\caption{Illustration of Lemma~\ref{l5}}\label{bildl5}
\end{figure}
\noindent
Second, if $l(t_0) \neq r(t_0)$, then $\tau_*(l(t_0),t_0) = 0$ and $y_*(r(t_0),t_0) = 0$ by Lemma~\ref{lemF=G}. We may apply the same arguments to points $(x,t)$ lying to the left of the segment joining $(0,\tau^*(l(t_0),t_0))$ with $(l(t_0),t_0)$ or to the right of the segment joining $(y^*(r(t_0),t_0))$ with $(r(t_0),t_0)$, respectively. If $(x,t)$ lies between those segments, then $x\in I(t)$ and Figure~\ref{bildrarefaction} applies.\\
\underline{Case $I(t_0) =\emptyset$:} Then $\Delta(0,t_0)$ contains the line segment connecting $(0,t_0)$ and $(y^*(0,t_0),0)$ as well as all points to the left of it. For points to the right of the line segment the same argument as in the first case ensures that they are contained in the characteristic triangle of a point $(z,t_0)$.
\end{proof}

\begin{lem}\label{lem:curves}
Let $t_1$ be strictly positive. Each point $(x_1, t_1)$ uniquely determines a curve $x=X(t)$, for $t\geq t_1$, with $x_1= X(t_1)$ such that the characteristic triangles associated to points on the curve form an increasing family of sets.
This curve is Lipschitz continuous as a function of $t\in[t_1,\infty[$. At every $t\geq t_1$, and $(x,t)$ on the curve we have the following:
\begin{enumerate}[label=(\roman*)]
\item \label{it1} If $F(x,t)<G(x,t)$ then
\begin{equation}\label{e2.7}
\lim_{t'', t'\searrow t}\frac{X(t'')-X(t')}{t''-t'}=
\left\{
\begin{aligned}
&\frac{x-y_*(x,t)}{t} &&\text{if}&&y_*(x,t)=y^*(x,t)\,\\
&\displaystyle{\frac{\int_{y_{*}(x,t)}^{y^{*}(x,t)}\rho_0 u_0}{\int_{y_*(x,t)}^{y^*(x,t)}\rho_0} }&&\text{if}&&y_*(x,t)<y^*(x,t)\,.
\end{aligned}
\right.
\end{equation}
\item \label{it2} If $F(x,t)>G(x,t)$ then
\begin{equation}\label{e2.8}
\lim_{t'', t'\searrow t}\frac{X(t'')-X(t')}{t''-t'}=
\left\{
\begin{aligned}
&\frac{x}{t-\tau_*(x,t)} &&\text{if}&&\tau_{*}(x,t)=\tau^{*}(x,t)\,\\
&\displaystyle{\frac{\int_{\tau_*(x,t)}^{\tau^*(x,t)}\rho_b u^2_b}{\int_{\tau_*(x,t)}^{\tau^*(x,t)}u_b\rho_b} }&&\text{if}&&\tau_{*}(x,t)<\tau^{*}(x,t)\,.
\end{aligned}
\right.
\end{equation}
\item \label{it3} If $F(x,t)=G(x,t)$ and $y^*(x,t)\neq0$ or $\tau^*(x,t)\neq 0$ then
\begin{equation}\label{e2.9}
\lim_{t'', t'\searrow t}\frac{X(t'')-X(t')}{t''-t'}=
\frac{\int_{0}^{y^*(x,t)}\rho_0 u_{0}+\int_{0}^{\tau ^*(x,t)}\rho_b u^2_{b}}{\int_{0}^{y^*(x,t)}\rho_0+\int_{0}^{\tau^*(x,t)}u_b\rho_b}\,.
\end{equation}
\item \label{it4} If $F(x,t)=G(x,t)$ and $y^*(x,t)=\tau^*(x,t)=0$, then
\begin{equation*}
\lim_{t'', t'\searrow t}\frac{X(t'')-X(t')}{t''-t'}=\frac{x}{t}
\end{equation*}
\item \label{it5} If $x=0$, $t>0$ and $F(0,t)<G(0,t)$, then
\begin{equation*}
\lim_{t'', t'\searrow t}\frac{X(t'')-X(t')}{t''-t'}=0
\end{equation*}
\end{enumerate}
\end{lem}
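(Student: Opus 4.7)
My plan is in three stages: construct the curve as a selection from the nested family of characteristic triangles passing through $(x_1,t_1)$, deduce its Lipschitz regularity from the bounded slopes of those triangles' edges, and derive the right-derivative formulas by differentiating barycentric identities arising from the minimum conditions at the apex. For the construction, Lemma~\ref{l5} guarantees that $(x_1,t_1)$ lies in at least one characteristic triangle whose apex is at height $t$ for each $t\geq t_1$, and Theorem~\ref{l4} orders any two such triangles by inclusion; I let $X(t)$ be the apex of the smallest such triangle, with a right-continuous selection at the countable exceptional set where more than one apex is compatible. Lipschitz continuity follows because in each of the five cases of Definition~\ref{d1} the edges of the containing triangle have slopes bounded on compact time intervals by constants depending only on $\|u_0\|_\infty$, $\|u_b\|_\infty$ and $\inf u_b$; explicitly, if $\Delta(X(t+\delta),t+\delta)$ is of type~1 with base $[\tilde y_*,\tilde y^*]$, the condition $(X(t),t)\in\Delta(X(t+\delta),t+\delta)$ gives $|X(t+\delta)-X(t)|\le(\delta/t)\max(|X(t)-\tilde y_*|,|X(t)-\tilde y^*|)$, with analogous bounds in the other cases.

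The heart of the argument is Case~\ref{it1}, $F(x,t)<G(x,t)$. For $t'>t$ close to $t$ the triangle $\Delta(X(t'),t')$ has type~1 base $[\tilde y_*(t'),\tilde y^*(t')]$ on the initial line, and the fact that both endpoints minimize $F(\cdot, X(t'),t')$ gives the identity
\begin{equation*}
\int_{\tilde y_*(t')}^{\tilde y^*(t')}\bigl[t'u_0(\eta) + \eta - X(t')\bigr]\rho_0(\eta)\, d\eta = 0,
\end{equation*}
which rearranges to the barycentric representation
\begin{equation*}
X(t')\int_{\tilde y_*(t')}^{\tilde y^*(t')}\rho_0(\eta)\, d\eta = \int_{\tilde y_*(t')}^{\tilde y^*(t')}\bigl[t'u_0(\eta)+\eta\bigr]\rho_0(\eta)\,d\eta.
\end{equation*}
The nesting $\tilde y_*(t')\leq y_*(x,t)\leq y^*(x,t)\leq\tilde y^*(t')$ combined with the semicontinuity of Lemma~\ref{lnew} applied to $(X(t'),t')\to(x,t)$ forces $\tilde y_*(t')\to y_*(x,t)$ and $\tilde y^*(t')\to y^*(x,t)$. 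Differentiating the barycentric identity at $t'=t$, the contributions from the drifting endpoints carry the factor $[tu_0(\eta)+\eta-x]\rho_0(\eta)$ at $\eta=y_*(x,t),y^*(x,t)$, whose integral over $[y_*(x,t),y^*(x,t)]$ vanishes by the $t$-analogue of the identity; only the interior term survives, producing the ratio \eqref{e2.7} in the shock subcase $y_*(x,t)<y^*(x,t)$. In the subcase $y_*(x,t)=y^*(x,t)=:y$, nesting and semicontinuity force $\tilde y_*(t')=\tilde y^*(t')=y$ on a right neighborhood of $t$, so the triangle collapses to the single characteristic from $(y,0)$ through $(x,t)$ and the slope is $(x-y)/t$.

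Cases~\ref{it2}--\ref{it5} follow by entirely analogous arguments. In Case~\ref{it2} the minimum identity for $G$ at $(X(t'),t')$ gives $X(t')\int\rho_b u_b\, d\eta = \int(t'-\eta)\rho_b(\eta)u_b(\eta)^2\,d\eta$ over $[\tilde\tau_*(t'),\tilde\tau^*(t')]$, whose $t'$-derivative at $t$ yields \eqref{e2.8}, with the single-minimizer subcase giving the slope $x/(t-\tau_*(x,t))$ of the line from $(0,\tau_*(x,t))$ through $(x,t)$. Case~\ref{it3} combines both minimum identities for $F$ and $G$, both minima being zero by Remark~\ref{rem:const}, and adding the two barycentric relations produces the combined ratio \eqref{e2.9}. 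Case~\ref{it4} is the degenerate subcase of \ref{it3} with both base intervals collapsing to $\{0\}$, so the apex evolves along the ray through the origin with slope $x/t$. In Case~\ref{it5} the continuity of $F,G$ from Lemma~\ref{l0} keeps $F(0,t')<G(0,t')$ on a right neighborhood of $t$, so the apex remains $X(t')=0$ there and the right derivative vanishes. The main obstacle throughout is the limit passage in Cases~\ref{it1}--\ref{it3}, where I must both control the drift of the minimizer endpoints as $t'\searrow t$ and verify that their first-order variations contribute no boundary term to the differentiated barycentric identity; this is precisely where the minimum-value characterization combines with the semicontinuity of $y_*,y^*,\tau_*,\tau^*$ to yield the clean formulas advertised in the lemma.
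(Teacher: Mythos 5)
Your construction of the curve and the convergence of the minimizer endpoints along it are in the spirit of the paper, but the core analytic step --- ``differentiating the barycentric identity at $t'=t$'' --- does not work as stated and is where the proof genuinely breaks. First, it is circular: differentiating $X(t')\int_{\tilde y_*(t')}^{\tilde y^*(t')}\rho_0 = \int_{\tilde y_*(t')}^{\tilde y^*(t')}[t'u_0(\eta)+\eta]\rho_0(\eta)\,d\eta$ in $t'$ presupposes differentiability of $X$ (the very thing being proved) and of the endpoint maps $\tilde y_*,\tilde y^*$ (resp.\ $\tilde\tau_*,\tilde\tau^*$), which are only monotone and semicontinuous and may jump. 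Second, the data are merely $L^\infty$, so there is no first-order condition at the minimizers; the boundary terms created by the moving limits of integration are endpoint evaluations of $[tu_0(\eta)+\eta-x]\rho_0(\eta)$ multiplied by (nonexistent) endpoint velocities, and your stated reason for discarding them --- that ``their integral over $[y_*,y^*]$ vanishes'' --- does not address these terms at all. Third, the lemma asserts the two-parameter limit over $t'',t'\searrow t$, which is strictly stronger than a right derivative at $t$; a pointwise differentiation cannot yield it. The paper avoids all three issues by never differentiating: it compares minimum values at two nearby points of the curve (each minimizer is admissible but suboptimal for the other point, e.g.\ $G(\tau^*(x'',t''),x'',t'')\leq G(\tau_*(x',t'),x'',t'')$ together with the symmetric inequality), which sandwiches the difference quotient $(X(t'')-X(t'))/(t''-t')$ between two ratios of integrals over intervals shrinking to $[\tau_*(x,t),\tau^*(x,t)]$; passing to the limit with the monotonicity and semicontinuity of the minimizers gives \eqref{e2.7}--\eqref{e2.9} and the Lipschitz bound in one stroke.

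Two further claims are incorrect. In the subcase $y_*(x,t)=y^*(x,t)=y$ you assert that nesting forces $\tilde y_*(t')=\tilde y^*(t')=y$ on a right neighborhood of $t$; nesting gives the opposite inclusion $\tilde y_*(t')\leq y\leq\tilde y^*(t')$ for $t'>t$, and a shock may form instantly at $(x,t)$, so the later triangles need not be degenerate --- the correct argument squeezes the quotient between the slopes of the two edges of the later triangle (the analogue of \eqref{e2.10}) and uses only the convergence $\tilde y_*(t''),\tilde y^*(t'')\to y$. And in case \ref{it3} the common value $F(x,t)=G(x,t)$ is in general \emph{not} zero: Remark~\ref{rem:const} and Lemma~\ref{lemF=G} give $F=G=0$ only in the rarefaction region, which is case \ref{it4}. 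What is available in case \ref{it3} is the single equality $F(y^*(x,t),x,t)=G(\tau^*(x,t),x,t)$ along the curve, and the paper exploits it through cross-inequalities mixing $F$ at one point with $G$ at the neighboring point, not through two separate identities each set equal to zero; as written, your derivation of \eqref{e2.9} rests on a false premise.
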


\begin{proof}
The statement of \ref{it1} corresponds to Lemma~2.4 in \cite{WHD97} and the proof can be found there.

For the proof of \ref{it2} let $t''>t'>t$ and $X(t'')=x'', X(t')=x'$. The non-intersecting property of characteristic triangles implies the chain of inequalities
\[
\tau_*(x'',t'') \leq \tau_*(x',t') \leq \tau_*(x,t) \leq \tau^*(x,t)  \leq \tau^*(x',t')  \leq \tau^*(x'',t''),
\]
and the semicontinuity then gives that $\tau_*(x'',t'') \to \tau_*(x,t)$ and $\tau^*(x'',t'') \to \tau^*(x,t)$ as $t''\to t$.
Consider the case when $\tau ^{*}(x,t)=\tau_{*}(x,t)$. From Figure~\ref{Bildderivative} it is straightforward to see the following inequality on inclinations:
\begin{figure}[htb]
\centering
\begin{tikzpicture}[scale=0.8]
\draw[->] (0,0) -- (6,0) node[anchor=north] {$x$};
\draw[->] (0,0) -- (0,5) node[anchor=east] {$t$};
\draw plot [smooth] coordinates {(0,1) (3,3) (3.5,3.3) (4,3.5) (6,4) };
\draw[dashed] (0,1) -- (3,3);
\draw (0,1) node[anchor=east] {$\tau_*(x,t)=\tau^*(x,t)$};
\draw	(0,1) circle[radius=1.5pt];
\fill (0,1) circle[radius=1.5pt];

\draw (3,3) node[anchor=west] {$(x,t)$};
\draw	(3,3) circle[radius=1.5pt];
\fill (3,3) circle[radius=1.5pt];

\draw (3.5,3.3) node[anchor=east] {$(x',t')$};
\draw	(3.5,3.3) circle[radius=1.5pt];
\fill (3.5,3.3) circle[radius=1.5pt];

\draw (4,3.5) node[anchor=south] {$(x'',t'')$};
\draw	(4,3.5) circle[radius=1.5pt];
\fill (4,3.5) circle[radius=1.5pt];

\draw[dashed] (4,3.5) -- (0,3);
\draw (0,3) node[anchor=east] {$\tau^*(x'',t'')$};
\draw	(0,3) circle[radius=1.5pt];
\fill (0,3) circle[radius=1.5pt];

\draw[dashed] (4,3.5) -- (0,0.2);
\draw (0,0.2) node[anchor=east] {$\tau_*(x'',t'')$};
\draw	(0,0.2) circle[radius=1.5pt];
\fill (0,0.2) circle[radius=1.5pt];
\draw (6,4) node[anchor=north] {$X(t)$};
\end{tikzpicture}
\caption{Bounds on slope}\label{Bildderivative}
\end{figure}
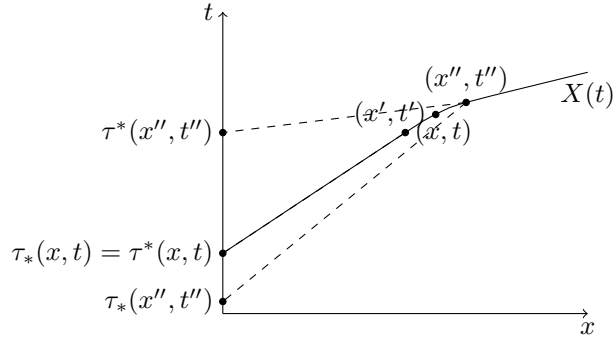
\begin{equation}\label{e2.10}
\frac{x''}{t''-\tau^{*}(x'', t'')} \geq \frac{x''-x'}{t''-t'} \geq \frac{x''}{t''-\tau_{*}(x'', t'')}.
\end{equation}
Passing to the limit as $t'', t' \searrow t$ leads to the first identity of equation \eqref{e2.8}.\\
Now consider $\tau_{*}(x,t)<\tau ^{*}(x,t)$. From the definitions of the boundary functional $G(\tau, y, x, t)$, $\tau_*$ and $\tau^*$, we have
\begin{equation}\label{e2.11}
\begin{split}
G\big(\tau^{*}(x'', t''), x'', t''\big) -&G\big(\tau_{*}(x', t'), x'', t''\big)\leq 0\\
\leq\, &G\big(\tau^{*}(x'', t''), x', t'\big)- G\big(\tau_{*}(x', t'),  x', t'\big).
\end{split}
\end{equation}
After simplification inequality \eqref{e2.11} yields
\begin{equation*}
\int\displaylimits_{\tau_{*}(x', t')} ^ {\tau^{*}(x'', t'')} \!\!\![x''-u_b (\eta)(t''-\eta)] \rho_b (\eta)u_b(\eta) d \eta \leq \int\displaylimits_{\tau_{*}(x', t')} ^ {\tau^{*}(x'', t'')} \!\!\![x'-u_b (\eta)(t'-\eta)] \rho_b (\eta)u_b(\eta) d \eta\,.
\end{equation*}
Thus we can conclude
\begin{equation}\label{e2.13}
\frac{x''- x'}{t''- t'} \leq
\frac{\int_{\tau_{*}(x', t')} ^ {\tau^{*}(x'', t'')}u^2_b (\eta)\rho_b (\eta)d \eta}{\int_{\tau_{*}(x', t')} ^ {\tau^{*}(x'', t'')}u_b(\eta)\rho_b (\eta)d \eta}\,.
\end{equation}
On the other hand considering the inequality
\begin{equation*}
\begin{split}
G\big(\tau_{*}(x'', t''), x'', t''\big) -&G\big(\tau^{*}(x', t'), x'', t''\big) \leq 0 \\
 \leq \,&G\big(\tau_{*}(x'', t''), x', t'\big)- G\big(\tau^{*}(x', t'),x',t'\big),
\end{split}
\end{equation*}
we get, using $\tau^*(x',t') \geq \tau_*(x'',t'')$ that
\begin{equation}\label{e2.14}
\frac{x''- x'}{t''- t'} \geq
\frac{\int_{\tau_{*}(x'', t'')} ^ {\tau^{*}(x', t')}u^2_b (\eta)\rho_b (\eta)d \eta}{\int_{\tau_{*}(x'', t'')} ^ {\tau^{*}(x', t')}u_b(\eta)\rho_b (\eta)d \eta}\,.
\end{equation}
Now passing to the limit as $t'', t'\searrow t$ in equations \eqref{e2.13}  and  \eqref{e2.14}, we proved the second identity of \eqref{e2.8}.

Now to verify the statement of \ref{it3} we assume $F(x,t)=G(x,t)$. Then by definition of the curve and the characteristic triangles, $F(X(t'),t') = G(X(t'),t')$ for all $t' \geq t$. Using the minimizing properties we have the following inequality.
\begin{align*}
&F\big(y^{*}({x'', t''}), x'', t''\big)-G\big(\tau^{*}(x',t') , x'', t''\big)= \\
=\,&G\big(\tau^{*}({x'', t''}), x'', t''\big)-G\big(\tau^{*}(x',t') , x'', t''\big)\leq 0  \\
\leq \,&F\big(y^{*}(x'', t''), x', t'\big)-F\big(y^ {*}(x',t'),  x', t'\big)=\\
=\,&F\big(y^{*}(x'', t''), x', t'\big)-G\big(\tau^ {*}(x',t'),  x', t'\big)\,.
\end{align*}
This inequality implies
\begin{equation*}
\begin{split}
\int_{0}^{y^{*}({x'', t''})}\rho_0(\eta)&u_0(\eta)(t''-t')+\rho_0(\eta)(x'-x'')d\eta  \\
&\leq \int_{0}^{\tau^{*}(x',t')}(x''-x')\rho_b(\eta)u_b(\eta)+ u_b^{2}(\eta)\rho_b(\eta)(t'-t'')d\eta\,,
\end{split}
\end{equation*}
and simplification leads to
\begin{equation}\label{e2.17}
\frac{\int_{0}^{y^{*}({x'', t''})}\rho_0 u_{0}+\int_{0} ^{\tau^{*}(x',t')}\rho_b u^2_{b}}{\int_{0}^{y^{*}(x'', t'')}\rho_0+\int_{0}^{\tau^{*}(x',t')}u_b\rho_b}\leq \frac{x''-x'}{t''-t'}\,.
\end{equation}
Now in the same way starting from the inequality
\begin{equation*}
\begin{split}
G\big(\tau^{*}(x'', t''), x'', t'' \big)-&F\big(y^{*}({x', t'}), x'', t''\big)\leq 0\\
\leq\,& G\big(\tau^ {*}(x'', t''), x', t'\big)- F\big(y^{*}(x', t'), x', t'\big)
\end{split}
\end{equation*}
and simplifying as before we get
\begin{equation}\label{e2.18}
\frac{\int_{0}^{y^{*}({x', t'})}\rho_0 u_{0}+\int_{0} ^{\tau^{*}(x'',t'')}\rho_b u^2_{b}}{\int_{0}^{y^{*}({x', t'})}\rho_0+\int_{0}^{\tau^{*}(x'',t'')}u_b\rho_b}\geq\frac{x''-x'}{t''-t'}\,.
\end{equation}
Passing to the limit as $t'', t'\searrow t$  in \eqref{e2.17} and \eqref{e2.18} completes the proof of \ref{it3}.\\
To prove \ref{it4} observe that in this case the curve $X(t')$ for $t'\leq t$ is just a straight line with inclination $x/t$.\\
In case \ref{it5} the statement is obvious by the definition of characteristic triangles.

Finally, the Lipschitz continuity follows from the fact that, whenever $t'' > t' > t$, the differences $X(t'') - X(t')$ are bounded by a constant times $t'' - t'$, according to \eqref{e2.10}, \eqref{e2.13}, \eqref{e2.14}, \eqref{e2.17} and \eqref{e2.18}.
\end{proof}

\begin{remark}
Note that from \ref{it5} it follows that when such a curve $X(t)$ reaches (with increasing time) $x=0$ it stays there until $F$ and $G$ become equal and then leaves $x=0$ according to \ref{it3}.
\end{remark}
We conclude this sections by \emph{defining} the functions $u(x,t)$ and $m(x,t)$. In the next section we will prove that these are indeed solutions of system \eqref{system_m}.
\begin{definition}\label{d2}
For $x,t>0$ we define the real valued function $u(x,t)$ by
\begin{equation*}
u(x,t)\!=\!\left\{
\begin{aligned}
&\frac{x-y_{*}(x,t)}{t}&&\text{if } F(x,t)<G(x,t) \text{ and } y_{*}(x,t)=y^{*}(x,t)\\
&\dfrac{\int_{y_{*}(x,t)}^{y^{*}(x,t)} \rho_0 u_0 }{\int_{y_{*}(x,t)}^ {y^{*}(x,t)} \rho_0} &&\text{if } F(x,t)< G(x,t)  \text{ and }y_{*}(x,t)<y^{*}(x,t)\\
&\frac{x}{t-\tau_{*}(x,t)}&&\text{if } F(x,t)> G(x,t) \text{ and }\tau_{*}(x,t)=\tau^{*}(x,t)\\
&\dfrac {\int_{\tau_{*}(x,t)}^{\tau^{*}(x,t)} \rho_b u^2_b}{\int_{\tau_{*}(x,t)}^ {\tau^{*}(x,t)} \rho_b u_b} &&\text{if } F(x,t)>G(x,t) \text{ and }\tau_{*}(x,t)<\tau^{*}(x,t)\\
&\frac{\int_{0} ^ {\tau^{*}(x,t)}\rho_b u^2_b+\int_{0} ^ {y^{*}(x,t)}\rho_0 u_0}{\int_{0} ^ {\tau^{*}(x,t)}\rho_b u_b +\int_{0} ^ {y^{*}(x,t)}\rho_0} &&\text{if } F(x,t)=G(x,t)\text{ and  }\!\!\left\{\begin{aligned} &y^*(x,t)\neq0\text{ or}\\ &\tau^*(x,t)\neq0\end{aligned}\right.\\
&\frac{x}{t}&&\text{if } F(x,t)=G(x,t) \text{ and }\!\!\left\{\begin{aligned} &y^*(x,t)=0 \text{ and}\\ &\tau^*(x,t)=0\,.\end{aligned}\right.\\
\end{aligned}
\right.
\end{equation*}
For $x=0$ and $t>0$ we define $u=u_b$ if $F(0,t)> G(0,t)$ and $u=0$ if $F(0,t)<G(0,t)$. For $F(0,t)=G(0,t)$ we use the same definition as for $x>0$.
\end{definition}
More specifically, when $x = 0$ and $F(0,t) = G(0,t)$, the second to last formula applies because $\tau^*(0,t) = t$ always.
\begin{definition}\label{d3}
For $t>0$ and $x\geq0$ we define the real valued function $m(x,t)$ by
\begin{equation*}
m(x,t)=\left\{
\begin{aligned}
&\int_{0}^{y_*(x,t)}\rho_0(\eta)d\eta&&\text{if }  F(x,t)\leq G(x,t)\ \text{ and }x>0\\
-&\int_{0}^{\tau_*(x,t)}\rho_b(\eta)u_b(\eta)d\eta&&\text{if }F(x,t)>G(x,t)\ \text{ or }x=0\,.\\
\end{aligned}\right.
\end{equation*}
\end{definition}
\begin{remark}
Since only (weak) derivatives of $m$ are of interest later, the definition at isolated points is not important apart from $x=0$, where we will use the height of the jump from $m(0,t)$ to $\lim_{x\searrow 0} m(x,t)$ times delta as the mass concentrated at $x=0$. If we have a whole area with $F=G$ then $m$ will be zero, since in this case $F=G=0$ and $y^*=\tau^*=0$ (c.f. Lemma~\ref{lemF=G} ). This is also consistent with tracing back along the triangles (lines) in the rarefaction wave.
\end{remark}
Note that in areas along the $t$-axis $x=0$ where $F\leq G$ we can \emph{not} ask to fulfill the boundary conditions \eqref{e1.3} locally. For all other regions on the boundary, the boundary conditions will be shown to hold, at least under some mild regularity conditions on the boundary data (cf. Section~\ref{sect:bcond}).

\section{Existence of generalized solution}
In this section we are going to show that $(u,m)$ as described in Definitions~\ref{d2} and \ref{d3} satisfy the system of equations
\eqref{system_m}.
For that purpose we first show how to extend the curves defined in Lemma~\ref{lem:curves} to the initial- or boundary manifold.\\
\begin{lem}\label{lem:contcurves}
There is a countable set $S$ of points on the $x$- and $t$-axis with the following properties.
\begin{enumerate}[label=(\roman*)]
\item \label{defcurvesX}For all $(\eta,0)\not\in S$ there is a unique Lipschitz continuous curve $x=X(\eta,t)$, $t\geq0$, such that $X(\eta,0)=\eta$ and the characteristic triangles associated to points on the curve form an increasing family of sets.
\item \label{defcurvesY}For all $(0,\eta)\not\in S$  there is a unique Lipschitz continuous curve $x=Y(\eta,t)$, $t\geq\eta$, such that $Y(\eta,\eta)=0$ and the characteristic triangles associated to points on the curve form an increasing family of sets.
\end{enumerate}
Further, for all $\eta>0$ such that $(\eta,0)$ and $(0,\eta)$ does not belong to $S$,
\begin{equation*}
\begin{aligned}
\tfrac{\partial}{\partial t}X(\eta, t)&= u(X(\eta, t), t) &&\text{for almost all $t>0$} \\
\tfrac{\partial}{\partial t}Y(\eta, t)&= u(Y(\eta, t), t) &&\text{for almost all $t>\eta$}\,,
\end{aligned}
\end{equation*}
where the right hand side is a measurable function.
\end{lem}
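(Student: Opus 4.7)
The plan is to build the curves $X(\eta,\cdot)$ and $Y(\eta,\cdot)$ by extending the forward curves of Lemma~\ref{lem:curves} down to the initial and boundary manifolds. I sketch the argument for $X(\eta,\cdot)$; the treatment of $Y(\eta,\cdot)$ is parallel, using $G$ and Lemma~\ref{l2} in place of $F$ and Lemma~\ref{l1}. Fix $\eta>0$. The idea is to look for some $(x_0,t_0)$ with $t_0>0$ for which $\eta$ minimizes $F(\cdot,x_0,t_0)$: by Lemma~\ref{l1}, the segment from $(\eta,0)$ to $(x_0,t_0)$ then lies in $\Delta(x_0,t_0)$, and every intermediate point $(x',t')$ has a degenerate characteristic triangle (the subsegment from $(\eta,0)$ to $(x',t')$). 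I define $X(\eta,t)$ on $[0,t_0]$ to be this segment and on $[t_0,\infty)$ to be the forward curve from $(x_0,t_0)$ supplied by Lemma~\ref{lem:curves}. The nesting structure from Theorem~\ref{l4} ensures that if two admissible choices $(x_0,t_0)$ exist, they lie on the same forward curve, so the construction is consistent.

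The exceptional set $S$ collects those $(\eta,0)$ or $(0,\eta)$ for which the procedure above genuinely fails to produce a unique curve. Such failure can only occur at (a) the origin, which may sit at the vertex of a rarefaction fan as in Lemma~\ref{lemF=G} and Figure~\ref{bildrarefaction}, and (b) ``shock-formation'' endpoints --- values $\eta$ which are the endpoints $y_*(x_0,t_0)$ or $y^*(x_0,t_0)$ of a non-degenerate initial base interval and which never sit in the strict interior of any such interval (with the analogous statements on the $t$-axis for $\tau_*,\tau^*$). The key observation is that any $\eta$ strictly in the interior of some non-degenerate base interval has its curve uniquely specified by the construction above, so such points do not enter $S$. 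Countability is then obtained as follows: at each rational time $q>0$, the non-degenerate base intervals associated with apices at time $q$ are pairwise disjoint (triangles with apex at the same time have disjoint interior, by Theorem~\ref{l4}), hence countable; taking the union over rational $q$ and using that every shock, once formed, has a non-empty interior base at every strictly later rational time, shows that the set of first-appearance endpoints is countable.

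Lipschitz continuity of $X(\eta,\cdot)$ follows from Lemma~\ref{lem:curves} for the forward portion and is immediate for the prepended line segment, the global Lipschitz constant being controlled by $\|u_0\|_\infty$ and $\|u_b\|_\infty$. For the derivative formula, Rademacher's theorem supplies differentiability almost everywhere, and at any such $t>0$ the one-sided limit in Lemma~\ref{lem:curves} equals $\partial_tX(\eta,t)$. A direct case-by-case comparison shows that formulas \eqref{e2.7}--\eqref{e2.9} match term-by-term the corresponding cases in Definition~\ref{d2} of $u(x,t)$. On the line-segment prefix $t\in(0,t_0)$ we are in the degenerate case $y_*(X(\eta,t),t)=y^*(X(\eta,t),t)=\eta$ of Definition~\ref{d2}, so
\[
u(X(\eta,t),t)=\frac{X(\eta,t)-\eta}{t}=\frac{x_0-\eta}{t_0},
\]
matching the slope of the segment. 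Measurability of $t\mapsto u(X(\eta,t),t)$ follows from its a.e.\ equality with $\partial_tX(\eta,t)$, which is bounded by the Lipschitz constant. The argument for $Y(\eta,\cdot)$ on $[\eta,\infty)$ is analogous, with $(0,\eta)$ replacing $(\eta,0)$ and $\tau$-minimizers replacing $y$-minimizers.

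The main obstacle I anticipate is establishing the countability of $S$ cleanly: the monotonicity and semi-continuity properties in Lemma~\ref{lnew}, combined with the non-intersection result in Theorem~\ref{l4}, furnish the right structure to reduce the count to a disjoint-intervals argument, but one must carefully distinguish the genuinely ambiguous (first-appearance) endpoints from those that later sit inside a larger base interval and are thus absorbed into the interior-point construction.
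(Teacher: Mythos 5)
Your construction has two genuine gaps, both traceable to a mischaracterization of the exceptional set. First, the set $S$ must consist of the points on the $x$- and $t$-axis from which a \emph{rarefaction fan} emanates, and these are not confined to the origin: any $(\eta,0)$ where $u_0$ jumps upward (and any $(0,\xi)$ where $u_b$ jumps suitably) is such a point. At such an $\eta$ there is a whole fan of rays from $(\eta,0)$, along each of which the (degenerate) characteristic triangles form an increasing family, so uniqueness fails; yet $\eta$ is neither the origin nor a ``shock-formation endpoint'' in your sense, so it escapes your $S$. At exactly these points your consistency claim --- that any two admissible apices $(x_0,t_0)$ lie on the same forward curve by Theorem~\ref{l4} --- is false: the two triangles may share only the single base point $(\eta,0)$, Theorem~\ref{l4} then gives no nesting, and the forward curves differ. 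Conversely, the points you do place in $S$ (base endpoints that never become interior) are in fact unproblematic, so your countability argument, even if sound, counts the wrong set. Second, your construction requires $\eta$ to be a \emph{minimizer} of $F(\cdot,x_0,t_0)$ for some apex with $t_0>0$, but a point $\eta$ lying strictly inside a non-degenerate base $[y_*(x_0,t_0),y^*(x_0,t_0)]$ need not be a minimizer at all (e.g.\ the center of a shock forming instantly from a downward jump of $u_0$ at $\eta$ is interior to the base for every $t>0$ and is never a minimizer), so no admissible $(x_0,t_0)$ exists and your procedure produces no curve, while your ``key observation'' implicitly identifies membership in the base interval with being a minimizer. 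You also never invoke the comparison $F(x_0,t_0)\le G(x_0,t_0)$, without which minimizing $F$ says nothing about the shape of $\Delta(x_0,t_0)$.

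The paper's proof avoids both problems by defining the curve pointwise in time rather than by prepending a segment: for fixed $\eta$ it sets $a^-(\eta,t)=\sup\{x:F\le G,\ y_*(x,t)<\eta\}$, $a^+(\eta,t)=\inf\{x:F\le G,\ y_*(x,t)>\eta\}$ and puts $X(\eta,t)=a^+(\eta,t)$; this is well defined for every $\eta$, automatically equals the free characteristic before absorption and the shock position afterwards, and the increasing-triangle property follows from Lemma~\ref{l1} together with the non-intersection property. The exceptional set is then precisely $\{\eta:\ a^-(\eta,t)\ne a^+(\eta,t)\ \text{for some }t\}$ (plus the boundary analogue with $b^\pm$), i.e.\ the rarefaction centers; its countability comes from the essential disjointness of the intervals $[a^-(\eta,t),a^+(\eta,t)]$ for distinct $\eta$ at fixed $t$, combined with the fact that $S_a(t)$ increases as $t$ decreases, so that $S=\bigcup_n S(1/n)$. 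Your Lipschitz and a.e.-derivative arguments (Rademacher plus matching Lemma~\ref{lem:curves} with Definition~\ref{d2}) are fine and essentially the paper's, but they only apply once the curve has been correctly defined for all $\eta$ outside a genuinely countable exceptional set; as it stands, the existence and uniqueness part of your argument does not go through.
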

\begin{proof}
Our proof follows the arguments found in \cite{Wang01} and \cite{WHD97} and extends them to include the boundary points.
We introduce $a^{\pm}(\eta, t)$ and $b^{\pm}(\xi,t)$ to consider all the cases simultaneously. For a fixed point  $(0,\xi)$ on the $t$-axis and $t>0$  we define
\begin{equation*}
\begin{aligned}
&B^-(\xi,t)= \{x\in [0,\infty[\colon F(x,t)\geq G(x,t)\,\, \textnormal{and}\,\, \tau_\ast(x,t) <\xi \}\\
& B^+(\xi,t) =\{x\in [0,\infty[\colon F(x,t)\geq G(x,t)\,\, \textnormal{and}\,\, \tau_\ast(x,t) >\xi\}\,,
\end{aligned}
\end{equation*}
and
\begin{equation*}
\begin{aligned}
b^-(\xi,t)=\begin{cases}
\sup B^-(\xi,t)\,, \,\,& B^-(\xi,t)\neq\emptyset\\
0\,,\,\, &B^-(\xi,t)=\emptyset\,,
\end{cases}
\end{aligned}
\end{equation*}
as well as
\begin{equation*}
\begin{aligned}
b^+(\xi,t)=\begin{cases}
\sup B^+(\xi,t)\,, \,\,& B^+(\xi,t)\neq\emptyset\\
0,\,\, &B^+(\xi,t)=\emptyset\,.
\end{cases}
\end{aligned}
\end{equation*}
Now for a fixed point $(\eta,0)$ on x-axis and $t>0$ we define similarly
\begin{equation*}
\begin{aligned}
&A^-(\eta,t)= \{x\in]0,\infty[\colon F(x,t)\leq G(x,t)\,\, \textnormal{and}\,\, y_\ast(x,t) <\eta \}\\
& A^+(\eta,t) =\{x\in]0,\infty[\colon F(x,t)\leq G(x,t)\,\, \textnormal{and}\,\, y_\ast(x,t) >\eta\}\,,
\end{aligned}
\end{equation*}
and
\[
a^-(\eta,t)=\sup A^-(\eta,t)\,,\quad    a^+(\eta,t)=\inf A^+(\eta,t)\,.
\]
Let us denote
\begin{align*}
&S_{b}(t)=\{(0,\xi)\colon \xi\in (0,\infty), b^-(\xi,t)\neq b^+(\xi,t)\}\\
&S_{a}(t)=\{(\eta,0)\colon \eta\in (0,\infty),  a^-(\eta,t)\neq a^+(\eta,t)\}\,.
\end{align*}

We define the set $S(t)$ by
\[
   S(t) =S_a(t)\cup S_b(t)\,.
\]
Then for any fixed $t>0$ the set $S(t)$ is countable. This follows from the fact that the intervals $[a^-(\eta,t),a^+(\eta,t)]$ and $[a^-(\eta',t),a^+(\eta',t)],$ can not intersect for $\eta\neq\eta'$ except at the endpoints, and the same holds true for $[b^-(\xi,t),b^+(\xi,t)]$ and $[b^-(\xi',t),b^+(\xi',t)]$ for $\xi\neq\xi'$. Indeed, let $\eta'>\eta$ and assume that $a^-(\eta',t) < a^+(\eta,t)$. This means that
\[
\sup\{x\in\mathbb{R}\colon F(x,t)\leq G(x,t)\,\, \textnormal{and}\,\, y_\ast(x,t) < \eta'\} < a^+(\eta,t)\,.
\]
Therefore, for all $x$ between
$a^-(\eta',t)$ and $a^+(\eta,t)$ we have $y_\ast(x,t) \geq \eta' > \eta$, contradicting the definition of $a^+(\eta,t)$. A similar proof works for the other intervals.\\
Observe that for decreasing $t$ the sets $S_a(t)$ form an increasing family of sets. To see this let $0<t' < t$ and $\eta\in S_a(t)$. Then necessarily $y_\ast(x,t) = \eta$ for all $x\in [a^-(\eta,t),a^+(\eta,t)]$. Let $L(x,t)$ be the line connecting $(x,t)$ and $(\eta,0)$ (for such $x$). By Lemma 2.3 of \cite{WHD97} we have that $y_\ast = \eta$ along this line. In particular, the line segment cut out by the bounding lines $L(a^-(\eta,t),t)$ and $L(a^+(\eta,t),t)$ at height $t'$ is contained in $[a^-(\eta,t'),a^+(\eta,t')]$.
Thus $\eta\in S_a(t')$. In particular, for every $t$ there is $n\in\mathbb{N}$ such that $S_a(t)\subset S_a(\frac1n)$. A similar reasoning can be applied to $S_b(t)$.
Therefore, the set
\[
   S = \bigcup_{t>0}S(t)
=\bigcup_{n\in\mathbb{N}} S\big(\tfrac1n\big)
\]
is countable.

Next we discuss the definition of generalized characteristics according to \cite{Wang01}.
For $(\eta,0)\notin S$, the generalized characteristic is defined by
\[
   X(\eta,t) = a^+(\eta,t)\,,\  t>0\,,
\]
with $X(\eta,0) = \eta$. Since $(\eta,0)\not\in S$, $a^-(\eta,t) = a^+(\eta,t)$ for all $t>0$.\\

Let $x=X(\eta,t)$ be the generalized characteristic. We claim that the characteristic triangles at the point $(X(\eta,t), t)$ form an increasing family of sets, that is if $(x_1,t_1)$ and $(x_2,t_2)$ are two points on the curve with $t_1<t_2$, then $\Delta(x_1,t_1)\subset \Delta(x_2,t_2).$
Indeed, in the case when both characteristic triangles are given by $(x_i,t_i)$, $(y_*(x_i,t_i),0)$, $(y^*(x_i,t_i),0)$, consider the line segment
joining the point $(x_2,t_2)$ to $(y^*(x_2,t_2),0)$ and assume that segment intersects $t=t_1$ at the point $(x_3,t_1)$. Then from Lemma~\ref{l1} we have $y^*(x_2,t_2)= y_*(x_3,t_1).$ Then by definition of $X(\eta,t)$, we find
\[
X(\eta,t_1)\leq x_3\,.
\]
On the other hand, assume that the the line segment
joining the points $(x_2,t_2)$ and $(y_*(x_2,t_2), 0)$ intersects the line $t=t_1$ at $(\bar{x}_3,t_1).$ Then we claim
\[
\bar{x}_3\leq X(\eta, t_1)\,.
\]
Indeed, the map $x\to y_*(x,t_2)$ is lower semicontinuous and increasing, hence left continuous. Further, $x_2 = \sup\{x:F(x,t_2)\leq G(x,t_2) {\rm\ and\ } y_*(x,t_2) < \eta\}$. Thus $y_*(x_2,t_2) = \lim_{x\to x_2-}y_*(x,t_2) \leq \eta$, and consequently $\bar{x}_3\leq \eta$ as well.
Combining the two arguments shows that $(x_1,t_1)\in \Delta(x_2,t_2)$ and hence using the non-intersection property of characteristic triangles we have
\[
\Delta(x_1,t_1)\subset \Delta(x_2,t_2)\,.
\]
In the case when either of the characteristic triangles is given by the edges $(x_i,t_i)$, $(0, \tau^*(x_2,t_2))$, $(y^*(x_i,t_i),0)$, the inclusion $\Delta(x_1,t_1)\subset \Delta(x_2,t_2)$ follows directly from the first argument above and the non-intersection property.

Now we turn to the time derivative of the curves $X(\eta,t)$.
It is obvious that $\eta\to X(\eta,t)$ is an increasing function at fixed $t>0$. Therefore, it is Borel (and Lebesgue) measurable.
Lemma~\ref{lem:curves} together with Definition~\ref{d2} mean that
\begin{equation}\label{eq:derivative}
\frac{\partial}{\partial t}X(\eta, t) = u(X(\eta,t),t)
\end{equation}
in the sense of a right derivative, at least for $(\eta,0)\not\in S$. Since $X(\eta,t)$ is Lipschitz continuous, it is differentiable almost everywhere and its derivative satisfies \eqref{eq:derivative}. Further, $u(X(\eta,t),t)$ is a limit of difference quotients of measurable functions, and thus measurable.

Similarly, for $(0,\xi)\notin S$ the generalized characteristic is defined by
\[
Y(\xi,t)=b^+(\xi,t), t>0\,.
\]
An analogous assertion as above is true for the characteristic triangles, and
\[
\frac{\partial}{\partial t}Y(\xi,t)=u(Y(\xi,t),t)
\]
holds for almost all $t> \xi$.
\end{proof}
\begin{remark}
The exceptional set $S$ corresponds to points on the $x$- or $t$-axis from which a rarefaction wave starts. Indeed, if $(\eta,0)\not\in S_a(t)$ and $t>0$, then $y_*(x,t) = \eta = y^*(x,t)$ whenever
$x\in [a_-(\eta,t), a_+(\eta,t)]$. One could define $X(\eta, t)$ by $X(\eta-, t)$ as is done in \cite{Wang97} or by $X(\eta+, t)$ (to include $\eta = 0$). However, to state and prove the results of the present paper, it is not required to assign  a value to $X(\eta,t)$ at the exceptional points.
\end{remark}
\begin{definition}\label{def:qE}
We define, for $x,t>0$ the momentum and the kinetic energy associated to equation~\ref{system_m} by
\begin{equation}\label{e3.1}
q(x,t)=\left\{
 \begin{aligned}
&\int_{0}^{y_*(x,t)}\rho_0(\eta)u_0(\eta)d\eta\,, &\text{if } F(x,t)&\leq G(x,t)\\
-&\int_{0}^{\tau_*(x,t)}\rho_b(\eta)u^2_b (\eta)d\eta\,,&\text{if } F(x,t)&>G(x,t)\,,\\
\end{aligned}\right.
\end{equation}
and
\begin{equation}\label{e3.2}
E(x,t)=\left\{
\begin{aligned}
&\tfrac{1}{2}\int_{0}^{y_*(x,t)}\rho_0(\eta)u_0(\eta)u(X(\eta,t),t)d\eta\,,&\text{if }F(x,t)&\leq G(x,t)\\
-&\tfrac{1}{2}\int_{0}^{\tau_*(x,t)}\rho_b(\eta)u^2_b (\eta)u(Y(\eta,t),t)d\eta\,, &\text{if }F(x,t)&>G(x,t)\,.\\
\end{aligned}\right.
\end{equation}
\end{definition}
The following lemma will make our physical interpretation more precise.
\begin{lem}\label{l51}
In the sense of Radon-Nikodym derivatives in $x$, the following holds in the interior of $\mathbb{R}_+^2$:
(i) $dq=udm$,
(ii) $dE=\frac{1}{2}u^2dm$.
\end{lem}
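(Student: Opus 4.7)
The plan is to fix $t>0$, decompose $(0,\infty)$ into the three disjoint sets $R_1=\{x:F(x,t)<G(x,t)\}$, $R_2=\{x:F(x,t)>G(x,t)\}$ and the rarefaction interval $I(t)$, and verify both (i) and (ii) on each piece. On the interior of $I(t)$, Lemma~\ref{lemF=G} gives $y^*=\tau^*=0$ and hence $m=q=E=0$, so both identities are trivial there. For the two nontrivial regions the strategy is to rewrite the Lebesgue--Stieltjes measures $dm$, $dq$ and $dE$ as pushforwards by the generalized characteristics $X(\cdot,t)$, $Y(\cdot,t)$ of Lemma~\ref{lem:contcurves} and then to compare their integrands.

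For $R_1$, I would approximate any continuous compactly supported $\varphi$ by step functions on intervals $[x_1,x_2]\subset R_1$ whose endpoints lie in the dense set of continuity points of $y_*(\cdot,t)$. On such intervals the defining formulas of $m$, $q$ and $E$ give
\[
\int_{[x_1,x_2]}\!\varphi\,dm=\int_{y_*(x_1,t)}^{y_*(x_2,t)}\!\varphi(X(\eta,t))\rho_0(\eta)\,d\eta,\quad \int_{[x_1,x_2]}\!\varphi\,dq=\int_{y_*(x_1,t)}^{y_*(x_2,t)}\!\varphi(X(\eta,t))\rho_0(\eta)u_0(\eta)\,d\eta,
\]
and the analogous identity for $dE$ with the extra factor $u(X(\eta,t),t)$ already built in from Definition~\ref{def:qE}. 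It therefore suffices to show that on these pushforward integrals the factor $u_0(\eta)$ may be replaced by $u(X(\eta,t),t)$. On the non-shock set $\{\eta:y_*=y^*\text{ at }(X(\eta,t),t)\}$, the classical characteristic relation $X(\eta,t)=\eta+tu_0(\eta)$ combined with the first branch of Definition~\ref{d2} makes this a pointwise identity. On the complementary shock set, the whole interval $[y_*(x_*,t),y^*(x_*,t)]$ of $\eta$-values collapses onto a single point $x_*$, and the averaging formula in the second branch of Definition~\ref{d2} delivers exactly the integrated equality $\int_{y_*}^{y^*}\rho_0 u_0=u(x_*,t)\int_{y_*}^{y^*}\rho_0$. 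For $E$ the same dichotomy works: on the shock set the factor $u(X(\eta,t),t)$ is constantly $u(x_*,t)$ and can be pulled out of the integral, turning $dE$ into $\tfrac12 u^2\,dm$.

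The region $R_2$ is handled by the completely parallel argument with $Y(\xi,t)$, the measures $\rho_b u_b\,d\xi$ and $\rho_b u_b^2\,d\xi$, and the third and fourth branches of Definition~\ref{d2}, noting that along a non-shock boundary characteristic $u(Y(\xi,t),t)=Y(\xi,t)/(t-\xi)=u_b(\xi)$. The last points to check are the possible atoms of $dm$ at the interface points $l(t)$ and $r(t)$ of $I(t)$. At $l(t)$, Lemma~\ref{lemF=G} gives $y^*(l(t),t)=0$, so the fifth branch of Definition~\ref{d2} degenerates to $u(l(t),t)=\int_0^{\tau^*(l(t),t)}\rho_b u_b^2\big/\int_0^{\tau^*(l(t),t)}\rho_b u_b$; multiplying by the jump $\int_0^{\tau^*(l(t),t)}\rho_b u_b$ of $m$ precisely recovers the jump of $q$, and an analogous squared computation recovers the jump of $E$. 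The point $r(t)$ is symmetric with the roles of initial and boundary data exchanged.

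The main technical difficulty I expect lies in the rigorous derivation of the pushforward formulas for $dm$, $dq$, $dE$ in the combined presence of shocks (jumps of $y_*$, $\tau_*$, producing atoms) and rarefactions (plateaus, which contribute nothing since $m=0$ on $I(t)$). Once these formulas are established on a dense family of intervals with continuity-point endpoints, a standard density argument extends the equalities to all test functions, and the Radon--Nikodym identities (i) and (ii) follow at once.
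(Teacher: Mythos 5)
Your route is genuinely different from the paper's. The paper proves the lemma locally: it computes the Radon--Nikodym derivatives as limits of increment ratios $\bigl(q(x_1,t)-q(x_2,t)\bigr)/\bigl(m(x_1,t)-m(x_2,t)\bigr)$ and $\bigl(E(x_2,t)-E(x_1,t)\bigr)/\bigl(q(x_2,t)-q(x_1,t)\bigr)$ as $x_1\nearrow x\swarrow x_2$, using the minimization inequalities for $G$ together with Remark~\ref{rem:limitstaustar}, and quotes the known result for the region $F<G$. You instead globalize at fixed $t$, writing $dm$, $dq$, $dE$ as pushforwards of $\rho_0\,d\eta$, $\rho_0u_0\,d\eta$ (respectively $\rho_bu_b\,d\eta$, $\rho_bu_b^2\,d\eta$) under the generalized characteristics and matching densities via the averaging formulas in Definition~\ref{d2}. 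This is viable and has the merit of exhibiting the sticky-particle structure directly, but it requires two ingredients the paper's route avoids: the pushforward identities themselves (which you correctly flag as the main technical work), and the first-order conditions $X(\eta,t)=\eta+tu_0(\eta)$ and $Y(\xi,t)=u_b(\xi)(t-\xi)$ on the non-shock sets. For merely bounded measurable $u_0,u_b$ these hold only at Lebesgue points, i.e.\ almost everywhere, and need a short separate derivation from the minimization; ``pointwise identity'' is too strong as stated, though a.e.\ validity suffices for your integral identities.

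There is, however, a concrete gap at the interface. Your treatment of the atoms at $l(t)$ and $r(t)$ invokes Lemma~\ref{lemF=G}, which is stated and proved only when $I(t)$ has nonempty interior. In the generic interface situation $l(t)=r(t)=x_*$ --- precisely the case of a delta shock separating the boundary-fed region from the initial-data-fed region, cf.\ Figures~\ref{bildInBd} and~\ref{fig:twoDel} --- one typically has \emph{both} $y^*(x_*,t)>0$ and $\tau^*(x_*,t)>0$, so the claim ``$y^*(l(t),t)=0$'' fails and neither degenerate form of the fifth branch applies. By Remark~\ref{rem:limitstaustar} the atom of $dm$ at $x_*$ has mass $\int_0^{y^*(x_*,t)}\rho_0+\int_0^{\tau^*(x_*,t)}\rho_bu_b$ and mixes contributions from both families; the identity $dq=u\,dm$ at this atom is exactly the full fifth branch of Definition~\ref{d2}, and for $dE$ one additionally needs $u(X(\eta,t),t)=u(x_*,t)$ for $\eta\in[0,y^*(x_*,t))$ and $u(Y(\eta,t),t)=u(x_*,t)$ for $\eta\in[0,\tau^*(x_*,t))$, which follows from the fact that $\Delta(x_*,t)$ contains $[0,y^*(x_*,t)]\times\{0\}$ and $\{0\}\times[0,\tau^*(x_*,t)]$ together with the non-intersection property. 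This isolated-point case is the one the paper singles out and proves explicitly, and it is the genuinely new boundary-value feature of the lemma; as written, your argument omits it. The repair fits entirely inside your own framework, but it must be added for the proof to be complete.
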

\begin{proof}
If $F(x,t)< G(x,t)$, then the result is Lemma~2.8. in \cite{Wang01}.\\
Let $(x, t)$ be a point where $G(x,t)<F(x,t)$.
If $\tau_*$ is constant in some neighborhood of $(x,t)$, then the above quantities are constant and the lemma holds trivially. Now suppose $\tau_*(x,t)$ is not constant in a neighborhood of $(x,t)$ and assume
$\tau_*(x,t)= \tau^ *(x,t)=\tau(x,t)$. Let $x_1 < x < x_2$, then by definition
\begin{align*}
G(\tau_{*}(x_1,t), x_1, t)&= \int_{0} ^ {\tau_{*}(x_1,t)}[x_1-(t-\eta)u_b(\eta)]\rho_b(\eta)u_b(\eta)d\eta\\
G(\tau_{*}(x_2,t), x_1 ,t)&= \int_{0} ^ {\tau_{*}(x_2,t)}[x_1-(t-\eta)u_b(\eta)]\rho_b(\eta)u_b(\eta)d\eta\,.
\end{align*}
By the minimizing properties we have we have
\[
G(\tau_{*}(x_1,t), x_1, t) \leq G(\tau_{*}(x_2,t), x_1 ,t)\,,
\]
and using the definitions and the fact that $\tau_{*}(x_2,t) < \tau_{*}(x_1,t)$ leads to
\begin{equation}\label{e3.5}
\frac{x_1}{t-\tau_{*}(x_2,t)} \leq \frac{\int_{\tau_{*}(x_2,t)}^{\tau_{*}(x_1,t)}\big(\frac{t-\eta}{t-\tau_{*}(x_2,t)}\big)u^2_b(\eta)\rho_b(\eta)d\eta}{\int_{\tau_{*}(x_2,t)}^{\tau_{*}(x_1,t)}\rho_b(\eta)u_b(\eta)d\eta}\,.
\end{equation}
Now since, in the upper integral,
\[
t - \eta \leq t - \tau_{*}(x_2,t),
\]
and since $\tau_*(x,t) = \tau^*(x,t) = \tau(x,t)$ is continuous at $(x,t)$ we can take the limits $x_1\nearrow x$ and $x_2\searrow x$ in \eqref{e3.5} to derive
\begin{equation}\label{e3.6}
\frac{x}{t-\tau(x,t)}\leq \lim_{x_2, x_1 \to x}\frac{q(x_1,t)-q(x_2,t)}{m(x_1,t)-m(x_2,t)}\,.
\end{equation}
Similarly, considering the inequality
\begin{equation*}
G(\tau_{*}(x_2,t), x_2, t) \leq G(\tau_{*}(x_1,t), x_2 ,t)
\end{equation*}
and following the analysis as above we get
\begin{equation}\label{e3.7}
\frac{x}{t-\tau(x,t)}\geq \lim_{x_2, x_1 \to x}\frac{q(x_1,t)-q(x_2,t)}{m(x_1,t)-m(x_2,t)}\,.
\end{equation}
From equations \eqref{e3.6} and \eqref{e3.7} and Definition~\eqref{d2}, we conclude $dq= u dm$.\\
If  $\tau_{*}(x,t)< \tau^{*}(x,t)$, then
\[
\lim_{x_2, x_1 \to x}\frac{q(x_2,t)-q(x_1,t)}{m(x_2,t)-m(x_1,t)}
=\lim_{x_2, x_1 \to x}\frac{\int_{\tau_{*}(x_2,t)} ^ {\tau_{*}(x_1,t)}\rho_b u_b^2}{\int_{\tau_{*}(x_2,t)} ^ {\tau_{*}(x_1,t)}\rho_b u_b}
= \frac{\int_{\tau_{*}(x,t)} ^ {\tau^{*}(x,t)}\rho_b u^2_b}{\int_{\tau_{*}(x,t)} ^ {\tau^{*}(x, t)}\rho_bu_b}\,.
\]
Here we used Remark~\ref{rem:limitstaustar}.
Now we consider the remaining case, where $(x, t)$ is a point with $F(x,t)= G(x,t)$. If this happens in an isolated point then we have, for $x_1<x<x_2$
\begin{multline*}
\lim_{x_2, x_1 \to x}\frac{q(x_2,t)-q(x_1,t)}{m(x_2,t)-m(x_1,t)}=\\
=\lim_{x_2, x_1 \to x}\frac{\int_{0} ^ {\tau_{*}(x_1,t)}\rho_b u^2_b+\int_{0} ^ {y_{*}(x_2,t)}\rho_0 u_0}{\int_{0} ^ {\tau_{*}(x_1,t)}\rho_bu_b +\int_{0} ^ {y_{*}(x_2,t)}\rho_0}
= \frac{\int_{0} ^ {\tau^{*}(x,t)}\rho_b u^2_b+\int_{0} ^ {y^{*}(x,t)}\rho_0 u_0}{\int_{0} ^ {\tau^{*}(x,t)}\rho_bu_b +\int_{0} ^ {y^{*}(x,t)}\rho_0}\,,
\end{multline*}
again using Remark~\ref{rem:limitstaustar}.
If on the other hand $F(x,t)=G(x,t)$ in a whole neighborhood of $(x,t)$ then we are in a rarefaction wave emanating from zero and thus $\tau^*(x,t)=y^*(x,t)=0$ in a whole neighborhood. Then $m$, $q$ and $E$ are zero by definition and the proof is finished. In the boundary points of the region $F(x,t)=G(x,t)$, the same proof as in the case $F<G$ or $F>G$ works, on the right and left boundary, respectively.
Thus in all possible cases we derived that $dq= u dm$ in the sense of Radon-Nikodym derivative.

Now we turn our attention to the proof of $dE=\frac{1}{2} u^2 dm$.
First let $(x,t)$ again be a point where $G(x,t) < F(x,t)$ and $\tau_{*}(x,t)=\tau^{*}(x,t)$. Then
\begin{equation}\label{e3.10}
E(x_2, t)-E(x_1, t)= \tfrac{1}{2}\int_{\tau_{*}(x_2, t)} ^ {\tau_{*}(x_1, t)} \rho_b (\eta) u^2_b (\eta) u(Y(\eta, t), t)d\eta\,.
\end{equation}
Note that for  $\tau_{*}(x_2, t) \leq \eta \leq \tau_{*}(x_1, t)$ we have
\begin{equation}\label{e3.11}
\frac{x}{t- \tau_{*}(x_2, t)} \leq u(Y(\eta, t), t) \leq  \frac{x}{t- \tau_{*}(x_1, t)}\,.
\end{equation}
Hence from equation \eqref{e3.10} and \eqref{e3.11} we derive
\[
\frac{1}{2}\frac{x}{t- \tau_{*}(x_2, t)} \leq \frac{E(x_2, t)-E(x_1, t)}{q(x_1, t)-q(x_2, t)} \leq \frac{1}{2} \frac{x}{t- \tau_{*}(x_1, t)}\,.
\]
In the limit $x_1\nearrow x\swarrow x_2$, we have $\frac{E(x_2, t)-E(x_1, t)}{q(x_1, t)-q(x_2, t)} \to \frac{1}{2}u$ and
we know $\frac{dq}{dm}= u$. Combining these two, we get $dE= \frac{1}{2} u^2 dm$.\\
For the final case assume $F(x,t)=G(x,t)$ in an isolated point. Then noting that $(x_1,t)$ lies left of $(x,t)$ and thus $F(x_1,t)>G(x_1,t)$, and the opposite holds true for $(x_2,t)$, we have
\begin{equation*}
\begin{split}
&\lim_{x_1, x_2 \to x}\frac{E(x_2, t)-E(x_1, t)}{q(x_2, t)-q(x_1, t)}=\\
&\ \ \,= \frac{\frac{1}{2}\int_{0}^{y^* (x,t)}\rho_0(\eta)u_0(\eta)u(X(\eta,t),t)d\eta+\frac{1}{2}\int_{0}^{\tau^{*} (x,t)}\rho_b(\eta)u^2_b (\eta)u(Y(\eta,t),t)d\eta}
{\frac{1}{2}\int_{0}^{y^* (x,t)}\rho_0(\eta)u_0(\eta)d\eta+\frac{1}{2}\int_{0}^{\tau^{*} (x,t)}\rho_b(\eta)u^2_b (\eta) d\eta}
\end{split}
\end{equation*}
For $\eta \in [0, y^ * (x,t))]$, we have $u(X(\eta,t),t)=u(x,t)$ and similarly for  $\eta \in [0, \tau^ * (x,t))]$ we know that $u(Y(\eta,t),t)=u(x,t)$.\\
Thus from the equation above we have
\[
\lim_{x_1, x_2 \to x}\frac{E(x_2, t)-E(x_1, t)}{q(x_2, t)-q(x_1, t)}=\frac{1}{2}u(x,t)\,.
\]
Since $dq =u dm$,  we again conclude  $dE=\frac{1}{2} u^2$.
This completes the proof.
\end{proof}

\begin{lem} \label{lem:integrals}
Define
\[
\mu(x,t)= \min (F(x,t), G(x,t))\,,
\]
then the following  holds for $x_1,x_2,t>0\colon$
\begin{equation}\label{e3.13}
\int_{x_1}^{x_2} m(x,t)dx= \mu(x_1,t)-\mu(x_2, t)\,.
\end{equation}
\begin{equation}\label{e3.14}
\int_{t_1}^{t_2} q(x,t) dt =\mu(x, t_2)- \mu (x, t_1)\,.
\end{equation}
\end{lem}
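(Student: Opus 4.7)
The plan is to identify the two integrands as partial derivatives of $\mu = \min(F,G)$ and then conclude by the fundamental theorem of calculus. Specifically, I aim to establish
\[
m(x,t) = -\partial_x\mu(x,t) \qquad\text{and}\qquad q(x,t) = \partial_t\mu(x,t)
\]
Lebesgue almost everywhere in the interior of $\mathbb{R}_+^2$, and then use local Lipschitz continuity of $\mu$ (which follows from Lemma~\ref{l0}) to invoke absolute continuity of the functions $x\mapsto\mu(x,t)$ and $t\mapsto\mu(x,t)$ on compact intervals.

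The main step is an envelope-type computation modeled on the proof of Lemma~\ref{l0}. At a point $(x,t)$ with $F(x,t)<G(x,t)$ and unique minimizer $y_*(x,t)=y^*(x,t)=:y(x,t)$, applying the minimality property twice sandwiches the difference quotient between $-\int_0^{y(x_1,t)}\rho_0$ and $-\int_0^{y(x_2,t)}\rho_0$ for $x_1<x<x_2$ close to $x$. Since the one-sided limits of $y_*$ and $y^*$ coincide at points of uniqueness (by the semicontinuity statements in Lemma~\ref{lnew}), passing to the limit yields $\partial_x F(x,t) = -\int_0^{y(x,t)}\rho_0 = -m(x,t)$; moreover $\mu=F$ in a neighborhood by continuity. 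The analogous computation at points with $F>G$ and unique $\tau_*=\tau^*$ gives $\partial_x G = \int_0^{\tau}\rho_b u_b = -m$. The computations in $t$ are entirely parallel: $\partial_t F = \int_0^y u_0\rho_0 = q$ when $F\leq G$, and $\partial_t G = -\int_0^{\tau}\rho_b u_b^2 = q$ when $F>G$, in accordance with Definition~\ref{def:qE}.

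The remaining points form a null set. At fixed $t$, the monotonicity of $y_*,y^*,\tau_*,\tau^*$ in $x$ (Lemma~\ref{lnew}) makes $\{x:y_*(x,t)<y^*(x,t)\}$ and $\{x:\tau_*(x,t)<\tau^*(x,t)\}$ countable, as jump sets of monotone functions. The locus $\{x:F(x,t)=G(x,t)\}$ is either a collection of isolated points or a closed interval $[l(t),r(t)]$; in the latter case Lemma~\ref{lemF=G} forces $F\equiv G\equiv 0$ on the interval with $y^*=\tau^*=0$ on its interior, so $m\equiv 0$ and $\mu\equiv 0$ there, making $\partial_x\mu = -m = 0$ trivially. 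The same countability/triviality reasoning applies at fixed $x$ in the $t$-variable.

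Finally, since $\mu$ is locally Lipschitz, $x\mapsto\mu(x,t)$ is absolutely continuous on $[x_1,x_2]$, and
\[
\mu(x_2,t) - \mu(x_1,t) = \int_{x_1}^{x_2}\partial_x\mu(x,t)dx = -\int_{x_1}^{x_2}m(x,t)dx,
\]
which rearranges to \eqref{e3.13}; the analogous argument in $t$ produces \eqref{e3.14}. I expect the main obstacle to be the uniform handling of the envelope argument across the three regimes $F<G$, $F=G$, $F>G$ and the transitions between them; this is precisely what Lemmas~\ref{lnew} and \ref{lemF=G} are designed to control.
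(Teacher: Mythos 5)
Your argument for \eqref{e3.13} is sound and is essentially a derivative-form restatement of the paper's proof: the paper establishes the uniqueness-free sandwich \eqref{e3.21}, $(x-x')m(x',t)\leq\mu(x',t)-\mu(x,t)\leq(x-x')m(x,t)$, and sums Riemann sums, while you differentiate and invoke absolute continuity; in the $x$-variable your exceptional-set argument is legitimate, because Lemma~\ref{lnew}(1)--(2) gives monotonicity and interlacing of $y_*,y^*,\tau_*,\tau^*$ \emph{in $x$}, so non-uniqueness occurs at countably many $x$, and Lemma~\ref{lemF=G} controls the set $\{x:F(x,t)=G(x,t)\}$, which is an interval precisely because $F-G$ is monotone in $x$.

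The gap is in \eqref{e3.14}, at the sentence ``the same countability/triviality reasoning applies at fixed $x$ in the $t$-variable.'' It does not. Lemma~\ref{lnew} gives monotonicity of $\tau_*,\tau^*$ in $t$, but \emph{no} monotonicity of $y_*,y^*$ in $t$ at fixed $x$, and none holds in general: a Dirac mass with zero velocity can rest at the position $x$ during a whole time interval, in which case $y_*(x,t)<y^*(x,t)$ for every $t$ in that interval, so the set of non-uniqueness times is not countable and not null. Your envelope computation presupposes a unique minimizer and says nothing on such a set (the identity $\partial_t\mu=q$ does still hold there, because the resting clump has vanishing velocity, forcing $\int_{y_*}^{y^*}\rho_0u_0=0$ so that $\int_0^{y_*}\rho_0u_0=\int_0^{y^*}\rho_0u_0$, but that is an argument you did not give). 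Likewise, Lemma~\ref{lemF=G} describes $\{F=G\}$ as a subset of the $x$-axis at fixed $t$; at fixed $x$ the set $\{t:F(x,t)=G(x,t)\}$ need not be an interval or countable, since $F(x,\cdot)-G(x,\cdot)$ is not monotone in $t$, so the ``triviality'' part does not transfer either. The clean repair is the paper's: prove, for all $t<t'$ and without any uniqueness assumption, the two-sided bound \eqref{eq3.20}, $(t'-t)q(x,t')\leq\mu(x,t')-\mu(x,t)\leq(t'-t)q(x,t)$, by comparing each minimum with the value of the potential at the other time's minimizer (four cases according to which of $F,G$ attains the minimum at $t$ and at $t'$). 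This bound shows $q(x,\cdot)$ is non-increasing, hence continuous off a countable set, and at continuity points the squeeze yields $\partial_t\mu=q$, which feeds into your fundamental-theorem step; alternatively one concludes directly by Riemann sums as in the paper. With that substitution your proof closes; without it, $\partial_t\mu=q$ is unproved on a possibly positive-measure set of times.
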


\begin{proof}
We start with proving the first equality. For that purpose let $t>0$ be fixed and pick any two points $x, x^{\prime} \in [x_1, x_2], \, x< x^{\prime}$. We claim that
\begin{equation}\label{e3.21}
(x-x^{\prime}) m (x^{\prime}, t) \leq \mu(x^{\prime},t)-\mu(x, t)\leq (x-x^{\prime}) m (x, t)\,.
\end{equation}
For $\mu(x, t)$ and $\mu(x^{\prime}, t)$ depending upon the minimization the possible cases are:
\begin{equation}\label{case}
\begin{aligned}
&(a)\, &&\mu(x,t)=F(x,t)\,,\ &\mu(x^{\prime},t)&=F(x^{\prime},t)\\
&(b)\, &&\mu(x,t)=G(x,t)\,,\ &\mu(x^{\prime},t)&=F(x^{\prime},t)\\
&(c)\, &&\mu(x,t)=G(x,t)\,,\ &\mu(x^{\prime},t)&=G(x^{\prime},t)
\end{aligned}
\end{equation}
The proof of the inequality \eqref{e3.21} for the  case $(a)$ in \eqref{case} can be found in  Lemma~2.9. in \cite{Wang01}. In case (b) we have
\begin{multline*}
\mu(x^{\prime},t)-\mu(x,t)=F(y_*(x^{\prime},t), x^{\prime}, t)-G(\tau_*(x,t), x, t)=\\
=[F(y_*(x^{\prime},t), x^{\prime}, t)-F(y_*(x^{\prime},t), x, t)]+[F(y_*(x^{\prime},t), x, t)-G(\tau_*(x,t), x, t)]\,.
\end{multline*}
Since the term in the second bracket is positive, we get
\begin{equation}\label{eq3.17}
\mu(x^{\prime},t)-\mu(x,t)\geq F(y_*(x^{\prime},t), x^{\prime}, t)-F(y_*(x^{\prime},t), x, t).
\end{equation}
One can also write
\begin{multline*}
\mu(x^{\prime},t)-\mu(x,t)=F(y_*(x^{\prime},t), x^{\prime}, t)-G(\tau_*(x,t), x, t)=\\
=[F(y_*(x^{\prime},t), x^{\prime}, t)-G(\tau_*(x,t), x^{\prime}, t)]+[G(\tau_*(x,t), x^{\prime}, t)-G(\tau_*(x,t), x, t)]\,.
\end{multline*}
Since the term in first bracket is negative, we get
\begin{equation}\label{eq3.18}
\mu(x^{\prime},t)-\mu(x,t)\leq G(\tau_*(x,t), x^{\prime}, t)-G(\tau_*(x,t), x, t)\,.
\end{equation}
Combining \eqref{eq3.17}-\eqref{eq3.18} and using the definition of $m$, $F$ and $G$, we conclude \eqref{e3.21}.\\

In case(c) we have
\begin{multline*}
\mu(x^{\prime},t)-\mu(x, t)=G(\tau_{*}(x^{\prime},t), x^{\prime}, t)-G(\tau_{*}(x,t), x, t)\\
=[G(\tau_{*}(x^{\prime},t), x^{\prime}, t)-G(\tau_{*}(x,t), x^{\prime}, t)]+[G(\tau_{*}(x,t), x^{\prime}, t)-G(\tau_{*}(x,t), x, t))]\,.
\end{multline*}
Since the first bracket of the above expression is negative, we get
\begin{equation} \label{e3.17}
 \mu(x^{\prime},t)-\mu(x, t) \leq G(\tau_{*}(x,t), x^{\prime}, t)-G(\tau_{*}(x,t), x, t)\,.
\end{equation}
On the other hand we write
\begin{multline*}
\mu(x^{\prime},t)-\mu(x, t)=G(\tau_{*}(x^{\prime},t), x^{\prime}, t)-G(\tau_{*}(x,t),  x, t)\\
= [G(\tau_{*}(x^{\prime},t), x^{\prime}, t)-G(\tau_{*}(x^{\prime},t), x , t)]+[G(\tau_{*}(x^{\prime},t), x, t)-G(\tau_{*}(x,t), x, t))]\,.
\end{multline*}
Now the second bracket of the expression is positive and thus
\begin{equation}\label{e3.19}
\mu(x^{\prime},t)-\mu(x, t) \geq G(\tau_{*}(x',t), x^{\prime}, t)-G(\tau_{*}(x',t), x, t)\,.
\end{equation}
Again, combining \eqref{e3.17}, \eqref{e3.19} and using the definitions of $G$ and $m$ we have \eqref{e3.21}.

Since $m$ is monotonous in $x$, it is also Riemann integrable. Taking Riemann sums and using \eqref{e3.21}, we get
\[
\int_{x_1}^{x_2} m(x,t)dx= \mu(x_1,t)-\mu(x_2, t)\,,
\]
finishing the proof of \eqref{e3.13}.\\

\noindent For the proof of \eqref{e3.14} let $x>0$ be fixed and pick $t< t^{\prime}$ in $[t_1, t_2]$. We claim that
\begin{equation}\label{eq3.20}
(t'-t)q(x,t')\leq\mu(x,t')-\mu(x,t)\leq(t'-t)q(x,t)\,.
\end{equation}
To verify this,  we distinguish the following possibilities for $\mu(x,t)$ and $\mu(x,t^{\prime})$:
\[
\begin{aligned}
&(a)\,&&\mu(x,t)=F(x,t)\,, \ &\mu(x,t^{\prime})&=F(x,t^{\prime})\\
&(b)\,&&\mu(x,t)=G(x,t)\,,\ &\mu(x,t^{\prime})&=G(x,t^{\prime})\\
&(c)\,&& \mu(x,t)=F(x,t)\,,\ &\mu(x,t^{\prime})&=G(x,t^{\prime})\\
&(d)\,&& \mu(x,t)=G(x,t)\,,\ &\mu(x,t^{\prime})&=F(x,t^{\prime})
\end{aligned}
\]
Note that again the case $(a)$ on $\{x\}\times[t_1,t_2]$ is covered by Lemma~2.9. in \cite{Wang01}.
Since the proofs in all cases are rather similar, we only present the proof of case $(b)$ explicitly. We start by observing that
\begin{multline*}
\mu(x,t^{\prime})-\mu(x, t)=G(\tau_{*}(x,t^{\prime}), x, t^{\prime})-G(\tau_{*}(x,t), x, t)=\\
= [G(\tau_{*}(x,t^{\prime}), x, t^{\prime})-G(\tau_{*}(x,t), x, t^{\prime})]+[G(\tau_{*}(x,t), x, t^{\prime})-G(\tau_{*}(x,t), x, t))]\leq\\
\leq G(\tau_{*}(x,t), x, t^{\prime})-G(\tau_{*}(x,t), x, t))\,,
\end{multline*}
and
\begin{multline*}
\mu(x,t^{\prime})-\mu(x, t)=G(\tau_{*}(x,t^{\prime}), x, t^{\prime})-G(\tau_{*}(x,t), x, t)=\\
=[G(\tau_{*}(x,t^{\prime}), x, t^{\prime})-G(\tau_{*}(x,t^{\prime}), x, t)]+[G(\tau_{*}(x,t^{\prime}), x, t)-G(\tau_{*}(x,t), x, t))]\geq\\
\geq G(\tau_{*}(x,t^{\prime}), x, t^{\prime})-G(\tau_{*}(x,t^{\prime}), x, t)\,.
\end{multline*}
Those two inequalities combined imply \eqref{eq3.20}.

For a fixed $x$ the function $y_*(x,t)$ is monotone in the interval $[t_1,t_2]$ and thus $q(x,t)$ is a function of bounded variation, hence Riemann integrable.
Now following a similar argument as before, identity \eqref{e3.14} follows from \eqref{eq3.20}.
\end{proof}

From the previous lemma we have $\mu_x= -m$ and $\mu_t = q$, and thus we verified the first equation of system \eqref{System_mqE}. As anticipated in the introduction, for a test function $\varphi$ with compact support in $]0,\infty[^2$ we infer using Lemma~\ref{l51}:
\begin{multline}\label{e3.22}
0=\iint [-\varphi_t\mu_x (x, t) + \varphi_x\mu_t(x, t)] dx dt=\\
=\iint [\varphi_t(x,t) m(x,t) + \varphi_x(x,t) q(x,t)] dx dt=\\
=\iint \varphi_t(x,t) m(x,t) dx dt - \iint \varphi(x,t)u(x,t) m(dx,t) dt\,.
\end{multline}
This identity proves that $(u, m)$ satisfies the first equation of the system \eqref{system_m}.\\
To prove the second equation, we use the following notation.
\begin{lem}\label{lem:qEsol}
For $x, t>0$,  let us denote
 \begin{equation*}
 H(x, t)=\left\{
 \begin{aligned}
 &H_1(x,t)=\int_0 ^ {y_{*} (x,t)} \rho_0 (\eta) u_0 (\eta) (X(\eta, t)-x) d\eta\,, &&\text{if }  F(x,t)\leq G(x,t)\\
&H_2(x,t)=-\int_0 ^ {\tau_{*}(x,t)} \rho_b (\eta) u^2_b (\eta) (Y(\eta, t)-x) d\eta\,, &&\text{if } F(x,t)>G(x,t)\,.
 \end{aligned}\right.
 \end{equation*}
Then we have $H_x = - q $ and $H_t = 2 E$ in the weak sense.
\end{lem}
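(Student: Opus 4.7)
The plan is to follow the template of Lemma~\ref{lem:integrals} and establish the two integral identities
\begin{equation*}
\int_{x_1}^{x_2} q(x,t)\,dx = H(x_1,t) - H(x_2,t)\,, \qquad \int_{t_1}^{t_2} 2E(x,t)\,dt = H(x,t_2) - H(x,t_1)\,,
\end{equation*}
from which integration by parts against a test function $\varphi\in\cD(\mathbb{R}_+^2)$ directly yields $H_x=-q$ and $H_t=2E$ in the distributional sense. A preliminary step is to check that $H$ is locally bounded and locally Lipschitz on $\mathbb{R}_+^2$, arguing along the lines of Lemma~\ref{l0} and using the local Lipschitz continuity of the curves $X,Y$ from Lemma~\ref{lem:contcurves} together with the local boundedness of $y_*$ and $\tau_*$.

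For the first identity I fix $t>0$ and take $x<x'$. Following the pattern of \eqref{e3.21}, the goal is to establish bounds of the form $|H(x',t) - H(x,t) + (x'-x)q(x,t)| \leq C(x'-x)(y_*(x',t) - y_*(x,t) + \tau_*(x,t)-\tau_*(x',t))$. The case analysis mirrors \eqref{case}. When $F(x,t)\leq G(x,t)$ and $F(x',t)\leq G(x',t)$, I split
\begin{equation*}
H_1(x',t) - H_1(x,t) = -(x'-x)\!\!\int_0^{y_*(x,t)}\!\!\!\!\rho_0 u_0\,d\eta + \!\!\int_{y_*(x,t)}^{y_*(x',t)}\!\!\!\!\rho_0 u_0\bigl(X(\eta,t)-x'\bigr)d\eta\,.
\end{equation*}
The main term is $-(x'-x)q(x,t)$, and by Lemma~\ref{l1} together with the nested-triangles property of Lemma~\ref{lem:curves}, $X(\eta,t)\in[x,x']$ on the remainder interval, giving the desired control. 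The case $F>G$ at both points is analogous using $H_2$ and Lemma~\ref{l2}, and the mixed case is reduced to the two pure cases by splitting at an intermediate point where $F=G$; there $H_1$ and $H_2$ agree, either because $X(y_*,t)=Y(\tau_*,t)$ coincides with the transition point, or because inside a rarefaction region $y_*=\tau_*=0$ and Lemma~\ref{lemF=G} forces $H_1=H_2=0$. A Riemann-sum argument, using that $y_*,\tau_*$ are monotone in $x$ with at most countably many jumps, then produces the first identity.

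For the second identity I fix $x>0$ and $t_1<t_2$. The decisive input is the relation $\partial_t X(\eta,t) = u(X(\eta,t),t)$ from Lemma~\ref{lem:contcurves}, which gives $2E_1(x,t) = \int_0^{y_*(x,t)}\rho_0 u_0\,\partial_t X(\eta,t)\,d\eta$. Applying Fubini to $\int_{t_1}^{t_2} 2E_1(x,t)\,dt$ (the swap is legitimate since $t\mapsto y_*(x,t)$ is monotone and the integrand is bounded) and invoking the fundamental theorem of calculus for the Lipschitz curve $t\mapsto X(\eta,t)$ yields $\int\rho_0 u_0\bigl(X(\eta,t_2)-X(\eta,t_1)\bigr)\,d\eta$, which matches $H_1(x,t_2)-H_1(x,t_1)$ once the $t$-dependence of the integration limit $y_*(x,t)$ is absorbed via $X(y_*(x,t),t)=x$ (valid off the exceptional set $S$). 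The region $F>G$ is treated symmetrically with $Y$ and $\tau_*$, and the mixed case reduces to these two as in the first identity.

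The main obstacle, as in Lemma~\ref{lem:integrals}, is the mixed case where the definition of $H$ switches between $H_1$ and $H_2$. Consistency at a transition point $x_0$ with $F(x_0,t)=G(x_0,t)$ follows from Lemma~\ref{lemF=G}: either $\{F=G\}$ on the $t$-slice is an isolated point, in which case both formulas tend to the same limit by continuity of $X$ and $Y$ at the relevant endpoints, or it is a nondegenerate interval where $F=G=0$ and $y^*=\tau^*=0$, forcing $H_1=H_2=0$ so that the transition is vacuous.
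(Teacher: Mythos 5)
Your proposal is correct in outline and reaches the same two integral identities as the paper (the paper's \eqref{e3.24} and its time analogue), but the way you prove them inside the pure regions is genuinely different. The paper handles the case $F>G$ by observing that $H_2(x,t)=\min_{\tau\geq 0}H_2(\tau,x,t)$, i.e.\ $H_2$ is itself the minimum of a potential-type functional, and then repeats the bracket-splitting of Lemma~\ref{lem:integrals} to get the two-sided bound $-(x-x')q(x,t)\leq H(x,t)-H(x',t)\leq -(x-x')q(x',t)$, while the case $F\leq G$ is delegated to \cite{WHD97}; you instead decompose the increment of $H_1$ (resp.\ $H_2$) directly and control the remainder by the confinement $X(\eta,t)\in[x,x']$ (resp.\ $Y(\eta,t)\in[x,x']$) for $\eta$ between the two minimizers, which is self-contained, treats both pure cases by one argument, and avoids both the minimization property and the external citation. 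For $H_t=2E$ the paper only says ``similar arguments,'' whereas your Fubini--FTC argument along the Lipschitz curves $t\mapsto X(\eta,t)$, $t\mapsto Y(\eta,t)$ with $\partial_t X=u(X,t)$ is an explicit and arguably cleaner route. The mixed case is treated in the same spirit in both: split at the interface $F=G$ and use that $H$ vanishes there.

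Three details deserve tightening. First, at an isolated transition point $x_0$ with $F(x_0,t)=G(x_0,t)$, the vanishing $H_1(x_0,t)=H_2(x_0,t)=0$ does not follow merely from $X(y_*,t)=Y(\tau_*,t)=x_0$ at the endpoints; you need $X(\eta,t)=x_0$ for \emph{all} $\eta\in[0,y^*(x_0,t)]$ and $Y(\eta,t)=x_0$ for all $\eta\in[0,\tau^*(x_0,t))$, which the paper deduces from the non-intersection of characteristic triangles (and one also needs the one-sided limits $H_2(l(t)-,t)=H_1(r(t)+,t)=0$ via Remark~\ref{rem:limitstaustar} to glue the Riemann sums). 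Second, your justification of Fubini via monotonicity of $t\mapsto y_*(x,t)$ is unnecessary (and not obviously true when $u_0$ changes sign); boundedness and joint measurability suffice. Third, the set $\{t\in[t_1,t_2]:\eta\leq y_*(x,t)\}$ need not be an interval, so the FTC step should either be run on each maximal subinterval, using that $X(\eta,\cdot)=x$ at interior endpoints, or be replaced by the observation that $\eta\mapsto(X(\eta,t)-x)\mathbf{1}_{\{\eta\leq y_*(x,t)\}}$ agrees a.e.\ with $\min(X(\eta,t)-x,0)$, whose $t$-derivative is $\partial_tX\,\mathbf{1}_{\{X<x\}}$ a.e. These are fixable refinements, not gaps in the strategy.
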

\begin{proof}
We will first show that
\begin{equation}\label{e3.24}
-\int_{x_1}^{x_2} q(x,t)dx= H(x_2,t)-H(x_1, t)\,.
\end{equation}
Let $t$ be fixed and $[x_1, x_2]$ be an interval. Assume that $x$ and $x^{\prime}$ are any two points in $[x_1,x_2]$ with $x<x^\prime.$ We will again argue by taking Riemann sums. First depending on the minimization we distinguish the cases
\[
\begin{aligned}
&(a)\,&&H(x_1,t)=H_1(x_1,t)\,,\ &&H(x_2,t)=H_1(x_2,t)\\
&(b)\,&&H(x_1,t)=H_2(x_1,t)\,,\ &&H(x_2,t)=H_2(x_2,t)\\
&(c)\,&&H(x_1,t)=H_2(x_1,t)\,,\ &&H(x_2,t)=H_1(x_2,t)\,.
\end{aligned}
\]
For case $(a)$ it is shown in \cite{WHD97} (what we denote by $H_1$ is denoted by $\theta$ in \cite{WHD97}), that \eqref{e3.24} holds.\\
For studying case $(b)$, we define $H_2(\tau,x,t)=-\int_0 ^ {\tau} \rho_b (\eta) u^2_b (\eta) (Y(\eta, t)-x) d\eta$. Since $Y(\eta, t)$ is decreasing in $\eta$ (by the non-intersecting property of the characteristic triangles) and $Y(\eta, t)=x$ for $\tau_{*}(x, t)< \eta< \tau^{*}(x, t)$, we have
\begin{equation}\label{e3.26}
H_2 (x,t)= \min_{\tau\geq 0} H_2 (\tau,x,t)\,.
\end{equation}
In particular, $H_2(\cdot,t)$ is upper semicontinuous. Note that $F(x,t) > G(x,t)$ and hence $H(x,t) = H_2(x,t)$ for all $x$ in the interval $[x_1,x_2]$.
The rest of the proof is similar to the one of Lemma~\ref{lem:integrals}.
We have, denoting minimizers in the usual way,
\begin{multline*}
H(x, t)-H(x^{\prime}, t)= H_2(x, t)-H_2(x^{\prime}, t)
       = H_2 (\tau_{*}(x,t), x, t)- H_2(\tau_{*}(x^{\prime},t),x^{\prime}, t)=\\
       =[H_2 (\tau_{*}(x,t), x, t)-H_2 (\tau_{*}(x^{\prime},t), x, t)]+ [H_2 (\tau_{*}(x^{\prime},t), x, t)-H_2(\tau_{*}(x^{\prime},t),x^{\prime}, t)]\leq\\
			\leq H_2 (\tau_{*}(x^{\prime},t), x, t)-H_2(\tau_{*}(x^{\prime},t),x^{\prime}, t) =-(x-x^{\prime})q(x^{\prime},t)\,.
\end{multline*}
On the other hand
\begin{multline*}
H(x, t)-H(x^{\prime}, t)= H_2(x, t)-H_2(x^{\prime}, t)
       = H_2 (\tau_{*}(x,t), x, t)- H_2(\tau_{*}(x^{\prime},t),x^{\prime}, t)=\\
       =[H_2 (\tau_{*}(x,t), x, t)-H_2 (\tau_{*}(x,t), x^{\prime}, t)]+ [H_2 (\tau_{*}(x,t), x^{\prime}, t)-H_2(\tau_{*}(x^{\prime},t),x^{\prime}, t)]\geq\\
			\geq H_2 (\tau_{*}(x,t), x, t)-H_2 (\tau_{*}(x,t), x^{\prime}, t)=-(x-x^{\prime})q(x,t)\,.
\end{multline*}
Combining those two inequalities establishes
\begin{equation}\label{e3.29.2}
 -(x- x^{\prime}) q(x, t)\leq H(x, t)-H(x^{\prime}, t) \leq -(x- x^{\prime}) q(x^{\prime}, t)\,.
\end{equation}
Now taking the supremum of the Riemann sums over all partitions of the interval $[x_1, x_2]$, we deduce \eqref{e3.24} on all intervals $[x_1,x_2]$ with $F>G$.

The case $F(x,t) = G(x,t)$ requires special consideration. Recall from Lemma~\ref{lemF=G} that this happens on an interval $I(t) = [l(t),r(t)]$. If $l(t) = r(t) = x$, then $\Delta(x,t)$ contains $[0, y^ * (x,t)]\times\{0\}$ and $\{0\}\times [0, \tau^ * (x,t)]$. Thus by the non-intersecting property, we have $X(\eta,t)=x$ for $\eta \in [0, y^ * (x,t)]$ and $Y(\eta,t) =x$ for $\eta \in [0, \tau^ * (x,t))]$. This implies $H_1 (x,t)=0=H_2(x,t)$.
Note that this is also true if $(x,t)$ lies in a rarefaction wave emanating from $0$. In this case we have $y^ * (x,t)=\tau^ * (x,t)=0$ for all points in the interior of $I(t)$ and thus $H$ as well as $q$ is zero in these points. Further,
$\Delta(l(t),t)$ contains $\{0\}\times [0, \tau^ * (l(t),t)]$ and $\Delta(r(t),t)$ contains $[0, y^ * (r(t),t)]\times\{0\}$, so $H$ is also zero in the boundary points of $I(t)$.\\
The proof in case $(c)$ then follows from these facts.
Since $F(x_1,t) > G(x_1,t)$ and $F(x_2,t) \leq G(x_2,t)$, we have $I(t)\subset ]x_1,x_2]$. Write $x_3 = l(t)$, $x_4 = r(t)$. We split the integral in three (possibly only two) parts and use the results of cases $(a)$ and $(b)$ accordingly,
\begin{multline*}
-\int_{x_1}^{x_2}q(x,t)dx=-\int_{[x_1,x_3[}q(x,t)dx -\int_{[x_3,x_4]}q(x,t)dx -\int_{]x_4,x_2]}q(x,t)dx=\\
=H_2(x_3-,t)-H_2(x_1,t)+ 0 + H_1(x_2,t)-H_1(x_4+,t)=H(x_2,t)-H(x_1,t)\,.
\end{multline*}
Here we used the assertions of Remark~\ref{rem:limitstaustar}, namely $\tau_*(x_3-,t) = \tau^*(x_3,t)$ and $y_*(x_4+,t) = y^*(x_4,t)$. The consideration above then allows us to conclude that $H_2(x_3-,t) = H_1(x_4+,t) = 0$.

Collecting all cases proves that $H_x= -q$ weakly for fixed $t$.
Similar arguments can be used to show $H_t= 2 E$.
\end{proof}
Now we conclude that, again for a test function with compact support in $]0,\infty[^2$
\begin{multline}\label{e3.30}
0=\iint H(x,t)(-\psi_{xtx}+\psi_{txx}) dx dt =\iint [H_x \psi_{tx} (x, t)- H_t \psi_{x x}(x, t)] dx dt=\\
=\iint [-q(x,t) \psi_{t x}(x, t)- 2E \psi_{x x}(x, t)] dx dt=\\
=\iint u (x,t) \psi_t (x, t)dm dt +\iint u^2(x,t)\psi_x(x, t)dm dt\,.
\end{multline}
Identity \eqref{e3.30} proves that $(u,m)$ satisfies the second equation of the system \eqref{system_m}.
Combining \eqref{e3.22} and \eqref{e3.30} we proved the following theorem:
\begin{theorem}
The functions $u$ and $m$ as given in Definiton~\ref{d2} and \ref{d3} respectively, are global solutions of \eqref{system_m} in the sense specified in the introduction.
The functions $m$, $q$ and $E$ given in Definition~\ref{d3} and Definition~\ref{def:qE} are global weak solutions of system \eqref{System_mqE} on $\mathbb{R}_+^2$.
\end{theorem}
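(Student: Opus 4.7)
The plan is essentially a bookkeeping synthesis; all the analytic work has been absorbed into the preceding lemmas, and the theorem is a summary of what they imply. First I would assemble the $(m,q,E)$ system \eqref{System_mqE}. By Lemma~\ref{lem:integrals}, the locally Lipschitz function $\mu = \min(F,G)$ (see Lemma~\ref{l0}) satisfies $\mu_x = -m$ and $\mu_t = q$ in the sense of distributions on $\R_+^2$. Commuting the mixed distributional derivatives $\partial_t \partial_x \mu = \partial_x \partial_t \mu$ in $\cD'(\R_+^2)$ immediately yields $m_t + q_x = 0$. In the same way, Lemma~\ref{lem:qEsol} gives $H_x = -q$ and $H_t = 2E$ weakly, and equality of mixed partials produces $q_t + (2E)_x = 0$. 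This disposes of the second assertion.

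For the first assertion I would pass from \eqref{System_mqE} to \eqref{system_m} by means of the Radon-Nikodym identities $dq = u\, dm$ and $dE = \tfrac12 u^2\, dm$ supplied by Lemma~\ref{l51}. Pick $\varphi \in \cD(\R_+^2)$ with compact support in $]0,\infty[^2$. For the first line of \eqref{system_m}, the chain
\begin{equation*}
0 = \iint(-\varphi_t\mu_x + \varphi_x\mu_t)\,dx\,dt = \iint(\varphi_t m + \varphi_x q)\,dx\,dt
\end{equation*}
is just the computation \eqref{e3.22}; using $dq = u\,dm$ in the second integral by Fubini (legitimate since $m(\cdot,t)$ is monotone in $x$ for a.e.\ $t$) converts it to $-\iint \varphi\, u\,dm\,dt$ after a further integration by parts in $x$, which is exactly the first equation of \eqref{system_m}. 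For the second line, starting from $H_{xxt} = H_{txx}$ as test against $\psi \in \cD$ gives $\iint q\psi_{tx} + 2E\psi_{xx}\,dx\,dt = 0$; substituting $dq = u\,dm$ and $2\,dE = u^2\,dm$ produces $\iint(u\psi_t + u^2\psi_x)\,dm\,dt = 0$, which is the displayed calculation \eqref{e3.30}.

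The only subtlety worth flagging is that the Radon-Nikodym identities of Lemma~\ref{l51} are established pointwise a.e.\ in $t$, while the equations \eqref{system_m} and \eqref{System_mqE} are statements in $\cD'(\R_+^2)$; Fubini applied to the nonnegative product measure $dm\otimes dt$ bridges the two viewpoints and justifies all the interchange of integration orders used above. Beyond this, no new estimate is required: the theorem follows by collecting Lemmas~\ref{l51}, \ref{lem:integrals}, and \ref{lem:qEsol} together with the two displayed computations \eqref{e3.22} and \eqref{e3.30}, and the main obstacle — namely proving those lemmas in the delicate case $F = G$ where characteristic triangles collapse into a rarefaction wave — has already been overcome.
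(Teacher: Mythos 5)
Your proposal is correct and follows essentially the same route as the paper: the second assertion is exactly the combination of Lemma~\ref{lem:integrals} and Lemma~\ref{lem:qEsol} via equality of mixed distributional derivatives, and the first assertion is obtained, as in the displayed computations \eqref{e3.22} and \eqref{e3.30}, by Stieltjes integration by parts in $x$ together with the Radon--Nikodym identities $dq=u\,dm$, $dE=\tfrac12 u^2\,dm$ of Lemma~\ref{l51}. No substantive difference from the paper's argument.
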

\begin{proof}
The statement about solutions of \eqref{System_mqE} is clear from Lemma~\ref{lem:integrals} and \ref{lem:qEsol}. Now by Lemma~\ref{l51} these functions and $u$ are related in the correct way as Radon-Nikodym derivatives. Thus following the discussion in the introduction $u$ and $m$ are solutions of \eqref{system_m}.
\end{proof}
Note that we did not discuss initial and boundary data yet. In the next section we will show that $\rho$, as the derivative of $m$, and $u$ satisfy the boundary and initial conditions \eqref{e1.2}, \eqref{e1.3} in an appropriate sense.

\section{Verification of initial and boundary condition} \label{sect:bcond}
Now we turn our attention to the initial and boundary conditions. For that purpose we define, as already discussed in the introduction, the Radon measure $\rho$ as the derivative of $m$. We also explicitly define the mass at $x=0$ as the one sided distributional derivative of $m$ and will show later in this chapter that this leads to conservation of mass.
\begin{definition}\label{def:sol}
Let $m$ be as in Definition~\ref{d3}. Then we define
\begin{equation}
\rho(x,t)=\left\{\begin{aligned}
&\partial_x m&&\text{for } x,t>0\\
&\lim_{x\searrow 0}\rho(x,t) &&\text{for }x=0\text{ and }F(0,t)>G(0,t)\\
&\delta\cdot\lim_{x\searrow 0}\left(m(x,t)-m(0,t)\right)&&\text{for }x=0\text{ and }F(0,t)\leq G(0,t)\,.
\end{aligned}\right.
\end{equation}
Here $\delta$ is the Dirac measure, $\partial_x$ is the distributional derivative and $\rho$ is interpreted as a measure.
\end{definition}
\begin{lem} \label{lem:vel0}
For $u$ according to Definition~\ref{d2} and for all $t>0$ we have
\begin{enumerate}
\item $F(0,t)<G(0,t)\Longrightarrow u(0+,t)<0$ \label{FG1}
\item $u(0+,t)<0\Longrightarrow F(0,t)\leq G(0,t)\,.$ \label{FG2}
\end{enumerate}
\end{lem}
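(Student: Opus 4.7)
My plan is to handle the two implications separately, exploiting the continuity of $F$ and $G$ (Lemma~\ref{l0}) to propagate the strict inequality at $x=0$ to a one-sided neighborhood $(0,\delta)$, so that only one branch of Definition~\ref{d2} is relevant near the boundary, and then to pass to the limit $x\to 0+$ using Remark~\ref{rem:limitstaustar}.

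For part (1), I would first observe that $G(0,t) = -\int_0^t (t-\eta)\rho_b(\eta)u_b(\eta)^2\,d\eta < 0$, since Lemma~\ref{lnew}(3) gives $\tau_*(0,t) = \tau^*(0,t) = t$. Thus $F(0,t) < G(0,t) < 0$, and since $F(0,0,t) = 0$, any minimizer of $F(\cdot,0,t)$ must be strictly positive, so $y^*(0,t) > 0$. By Lemma~\ref{l0}, $F<G$ holds on some interval $(0,\delta)$, on which $u(x,t)$ is given by one of the first two formulas of Definition~\ref{d2}. In the single-minimizer subcase, $u(x,t) = (x - y_*(x,t))/t$ and Remark~\ref{rem:limitstaustar} gives $y_*(x,t)\to y^*(0,t)$ as $x\to 0+$, so $u(0+,t) = -y^*(0,t)/t < 0$. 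In the multi-minimizer subcase, the fact that $F(\cdot,x,t)$ is constant equal to $F(x,t)$ on $[y_*(x,t),y^*(x,t)]$ yields the identity
\[
   \int_{y_*(x,t)}^{y^*(x,t)} [tu_0(\eta)+\eta - x]\rho_0(\eta)\,d\eta = 0\,,
\]
which solves to
\[
   u(x,t) = \frac{1}{t}\left(x - \frac{\int_{y_*(x,t)}^{y^*(x,t)} \eta\,\rho_0(\eta)\,d\eta}{\int_{y_*(x,t)}^{y^*(x,t)} \rho_0(\eta)\,d\eta}\right).
\]
By Remark~\ref{rem:limitstaustar}, both endpoints $y_*(x,t)$ and $y^*(x,t)$ collapse to $y^*(0,t)$ as $x\to 0+$, so the enclosed weighted mean of $\eta$ also tends to $y^*(0,t)$, giving once more $u(0+,t) = -y^*(0,t)/t < 0$.

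For part (2), I would argue by contraposition: assume $F(0,t) > G(0,t)$. Lemma~\ref{l0} then yields $F(x,t) > G(x,t)$ on some interval $(0,\delta)$, so $u(x,t)$ on this interval is given by one of the third or fourth formulas of Definition~\ref{d2}. In the single-minimizer branch, $u(x,t) = x/(t - \tau_*(x,t))$, and $\tau_*(x,t) < t$ strictly: since $u_b$ is bounded, $u_b(\eta)(t-\eta) < x$ for $\eta$ sufficiently close to $t$, so the integrand of $G(\tau,x,t)$ is strictly positive in a left neighborhood of $\tau=t$, forcing $\tau_*(x,t)$ to lie strictly below $t$. Hence $u(x,t) > 0$. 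In the multi-minimizer branch, $u(x,t)$ is the $\rho_b u_b$-weighted mean of $u_b$, which is strictly positive because $u_b>0$. Either way, $u(x,t) > 0$ on $(0,\delta)$, whence $u(0+,t) \geq 0$, contradicting $u(0+,t) < 0$.

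The main technical obstacle I anticipate is the weighted-mean identity in part (1) in the multi-minimizer subcase; it converts the piecewise definition of $u$ into a closed form whose limit is manifestly negative. The subsidiary observation in part (2) is that $\tau_*(x,t) < t$ strictly whenever $x>0$, which keeps the fraction $x/(t-\tau_*(x,t))$ well-defined and positive.
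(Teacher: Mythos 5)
Your proposal is correct and follows essentially the same route as the paper: localize via continuity of $F$ and $G$ to a one-sided neighborhood where only one pair of branches of Definition~\ref{d2} applies, use the minimizer identity $\int_{y_*}^{y^*}(tu_0+\eta-x)\rho_0\,d\eta=0$ together with Remark~\ref{rem:limitstaustar} and $y^*(0,t)>0$ for part (1), and the observation that $\tau^*(x,t)<t$ for $x>0$ (since $G(\cdot,x,t)$ is increasing past $t-x/\|u_b\|_\infty$) for part (2). The only cosmetic differences are that you compute the exact limit $u(0+,t)=-y^*(0,t)/t$ where the paper settles for the inequality $tu(0+,t)\leq -y^*(0,t)$, and you phrase (2) as an explicit contraposition.
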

\begin{proof}
We start by proving the first implication. Since $F$ and $G$ are continuous in $(x,t)$, the statement $F(x,t)<G(x,t)$ holds true in a whole neighborhood of the point $(0,t)$. Now following Definition~\ref{d2} in this neighborhood $u$ is defined as
\[
u(x,t)=\left\{\begin{aligned}
&\frac{x-y_*(x,t)}{t}&&\text{if } y_{*}(x,t)=y^{*}(x,t)\\
&\dfrac{\int_{y_{*}(x,t)}^{y^{*}(x,t)} \rho_0 u_0 }{\int_{y_{*}(x,t)}^ {y^{*}(x,t)} \rho_0} &&\text{if } y_{*}(x,t)<y^{*}(x,t)\,.\\
\end{aligned}\right.
\]
In the first case we see that for $x\searrow 0$ the velocity $u$ becomes negative, apart from the case when $\lim_{x\searrow 0}y_*(x,t)=0$. This is however impossible because that would lead to $F(y_*(0,t),0,t)=0$ and thus $F(0,t) = 0\geq G(0,t)$ contradicting the assumption.\\
In the second case, observing that $F(y_*(x,t),x,t)=F(y^*(x,t),x,t)$ and simplifying we get $$\int_{y_*(x,t)}^{y^*(x,t)}(tu_0(\eta)+\eta-x)\rho_0(\eta)d\eta=0.$$
This implies
\[t u(x,t)-x=-\frac{\int_{y_*(x,t)}^{y^*(x,t)}\eta\rho_0(\eta)d\eta}{\int_{y_*(x,t)}^{y^*(x,t)}\rho_0(\eta)d\eta} \leq- y_*(x,t) .\]
Now passing to the limit as $x\searrow 0$, we obtain $tu(0+,t)\leq-y_*(0+,t) = -y^*(0,t)$ by Remark~\ref{rem:limitstaustar}. Since $G(0,t) \leq 0$ always and $F(0,t) < G(0,t)$ by assumption, we must have $y^*(0,t) > 0$ and hence
$u(0+,t) < 0$.\\
In order to prove $(2)$ first note that, for $x>0$ we have that $\tau^*(x,t)<t$. This is due to the fact that at fixed $(x,t)$ the quantity $G(\tau,x,t)$ becomes increasing for $\tau > t-x/\|u_b\|_\infty$, as seen from the definition of $G$ and remembering that we assumed $\rho_b$ and $u_b$ to be positive.
Thus the only cases in Definition~\ref{d2} that can lead to negative $u(0+,t)$ are cases with $F<G$ or $F\leq G$.
\end{proof}
\begin{theorem}\label{thm:boundary}
The pair $(\rho,u)$ as defined above solves equation \eqref{e1.1} in $\mathbb{R}_+^2$.\\
The initial conditions are satisfied in the sense that for almost all $x$ we have $\lim_{t\searrow 0} u(x,t)= u_0(x)$ and  $\rho=\partial_{x}m$ with $\lim_{t\searrow 0}m(x,t) = \int_0^x \rho_0(y) dy$.\\
The boundary condition is satisfied in regions where $F(0,t) > G(0,t)$ in the following sense: For almost all $t$ we have $\lim_{x\searrow 0} u(x,t) = u_b(t)$.\\
If in addition $u_b$ is continuously differentiable and $\rho_b$ is locally Lipschitz continuous, then $\lim_{x\searrow 0} \rho(x,t)u(x,t) = \rho_b(t)u_b(t)$.
\end{theorem}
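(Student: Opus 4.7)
The plan is to address the four assertions in turn.

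\medskip

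The solution statement on $\mathbb{R}_+^2$ is essentially a restatement of the theorem at the end of Section~3: test functions in $\cD(\mathbb{R}_+^2)$ vanish near $x=0$, so the Dirac contribution to $\rho$ at the boundary does not affect the distributional identities, and the equivalence of \eqref{system_m} with \eqref{e1.1} via $\rho=\partial_x m$, discussed in the introduction, closes this part. For the initial data, I would fix $x>0$ and note that $G(x,0)=0$ while $F(x,0)=\int_0^x(\eta-x)\rho_0(\eta)\,d\eta<0$; by Lipschitz continuity (Lemma~\ref{l0}) the strict inequality $F(x,t)<G(x,t)$ persists for small $t$, so the characteristic triangle at $(x,t)$ rests on the $x$-axis. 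Since $y=x$ is the unique minimizer of $F(\cdot,x,0)$ (strict positivity of $\rho_0$), the semicontinuity in Lemma~\ref{lnew} forces $y_*(x,t),y^*(x,t)\to x$ as $t\to 0^+$, hence $m(x,t)\to \int_0^x\rho_0\,d\eta$. For $u$, Definition~\ref{d2} gives either $u(x,t)=(x-y_*(x,t))/t$, which equals $u_0(y_*(x,t))$ via the first-order minimization condition, or a $\rho_0$-weighted mean of $u_0$ on $[y_*,y^*]$; in either case Lebesgue differentiation yields $u(x,t)\to u_0(x)$ at a.e.\ $x$.

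\medskip

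For the boundary velocity, fix $t$ a Lebesgue point of $\rho_b$ and $u_b$ with $F(0,t)>G(0,t)$. Lipschitz continuity of $F-G$ extends the strict inequality to an open strip $\{x<x_0\}$, placing us in Case~2 of Definition~\ref{d1}. Monotonicity of $\tau_*,\tau^*$ in $x$ together with Lemma~\ref{lnew}(3), the semicontinuity properties, and the bound $\tau^*(x,t)\le t$ (the integrand of $G$ is positive past $\eta=t$) force $\tau_*(x,t),\tau^*(x,t)\nearrow t$ as $x\to 0^+$. When $\tau_*<\tau^*$, Definition~\ref{d2} writes $u(x,t)$ as a ratio of integrals of $\rho_b u_b^2$ and $\rho_b u_b$ on $[\tau_*,\tau^*]$, and Lebesgue differentiation at $t$ directly yields the limit $u_b(t)$. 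When $\tau_*=\tau^*$ the same conclusion is extracted from the minimization inequality $G(\tau_*,x,t)\leq 0$ combined with the positivity of the integrand on thin intervals around $\tau_*(x,t)$ and the Lebesgue-point property.

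\medskip

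For the momentum limit, the added regularity does the heavy lifting. With $u_b\in C^1$, the function $\tau\mapsto u_b(\tau)(t-\tau)$ has derivative $-u_b(t)<0$ at $\tau=t$, so by strict monotonicity near $t$ the equation $x=u_b(\tau)(t-\tau)$ admits a unique $C^1$ solution $\tau=\tau_*(x,t)$ for small $x>0$, forcing $\tau_*=\tau^*$, and the implicit function theorem gives
\[
\partial_x\tau_*(x,t)=\frac{1}{u_b'(\tau_*)(t-\tau_*)-u_b(\tau_*)}\xrightarrow[x\to 0^+]{}-\frac{1}{u_b(t)}\,.
\]
From $m(x,t)=-\int_0^{\tau_*(x,t)}\rho_b u_b\,d\eta$ together with continuity of $\rho_b u_b$ (granted by local Lipschitz continuity of $\rho_b$ and $C^1$ regularity of $u_b$), the fundamental theorem gives $\rho(x,t)=-\rho_b(\tau_*)u_b(\tau_*)\,\partial_x\tau_*(x,t)\to\rho_b(t)$, while $u(x,t)=x/(t-\tau_*(x,t))=u_b(\tau_*(x,t))\to u_b(t)$. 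Multiplying delivers the stated limit. The main obstacle I anticipate is the degenerate subcase $\tau_*=\tau^*$ in the boundary-velocity step: there is no open interval over which to average, so one has to exploit the one-sided minimization inequality carefully at a Lebesgue point of merely measurable data, essentially showing that $x=u_b(\tau_*)(t-\tau_*)$ holds asymptotically even though the pointwise first-order condition cannot be invoked directly.
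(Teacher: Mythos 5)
Your proposal is correct and follows essentially the same route as the paper: $F<G$ for small $t$ with minimizers collapsing to $x$ for the initial data (the paper simply cites Wang's argument for $u$), $\tau_*,\tau^*\nearrow t$ combined with Lebesgue-point averaging for the boundary velocity, and the implicit function theorem applied to $f(x,\tau)=x-u_b(\tau)(t-\tau)$ with $\partial_x\tau_*\to -1/u_b(t)$ for the momentum limit. The one step you leave unexecuted, the degenerate case $\tau_*=\tau^*$, is resolved in the paper by precisely the device you anticipate: comparing $G$ at the minimizer with its values at $\tau(x_n,t_0)\pm h x_n$ and invoking the Lebesgue-point property to conclude $x_n/(t_0-\tau(x_n,t_0))\to u_b(t_0)$ after letting $h\to 0$.
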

\begin{proof}
The fact that $(\rho,u)$ is a solution is clear from the discussion in the introduction since they are derived from a solution $(m,u)$ of \eqref{system_m}. \\
To prove the validity of the initial conditions observe that
\begin{equation}
\lim_{t \searrow 0} F(y, x,t)=\int_0^y (\eta-x) \rho_0 (\eta) d \eta
\end{equation}
and
\begin{equation}
\lim_{t \searrow 0} G(x,t)=0.
\end{equation}
The assumption $\rho_0 >0$ implies that $F(x, t) < G(x,t)$ for small values of $t$.  So the initial condition holds for $u$ by the arguments of Wang \cite{Wang97}.  Since $y_{*}(x,t), y^{*}(x,t)\to x$  as $t \searrow 0$ , we get
\begin{equation*}
\lim_{t\searrow 0}m(x,t)= \int_0 ^{x} \rho_0 (y)dy.
\end{equation*}
Next we verify the boundary conditions. From our assumptions we have $F(0, t)>G(0,t)$ and by continuity, for $x$ close enough to zero, $G(x,t)< F(x,t)$.
Let $t_0$ be a Lebesgue point of $u_b$ and $\rho_b$. We start by showing that the boundary conditions for $u$ hold, i.e. $\lim_{x\searrow 0} u(x,t_0)= u_b (t_0)$.
Let $(x_n,t_0)$ be sequence converging to $(0, t_0)$. First assume that $\tau_{*}(x_n,t_0)= \tau^{*}(x_n,t_0)=\tau(x,t_0)$.  In this case, the minimum is unique and thus for $h>0$ (in fact, any $h\neq 0$) we have
\begin{equation*}
G(\tau(x_n,t_0), x_n, t_0) < G(\tau(x_n,t_0)+h x_n, x_n, t_0).
\end{equation*}
This implies
\begin{multline}
x _n\int^{\tau (x_n,t_0)+ hx_n} _ {\tau(x_n,t_0)} \rho_b (\eta)u_b(\eta)d\eta \geq \int^{\tau(x_n,t_0)+ h x_n} _
{\tau(x_n,t_0)} u^2_b (\eta)(t_0-\eta)\rho_b (\eta) d\eta\\
\geq (t_0-\tau (x_n,t_0)-hx_n)\int^{\tau(x_n,t_0)+ h x_n} _
{\tau(x_n,t)} u^2_b (\eta)\rho_b (\eta) d\eta.
\label{e5.7}
\end{multline}
Simplifying \eqref{e5.7} and using Definition \ref{d2}, we get
\begin{equation*}
\Big(\frac {1}{u(x_n, t_0)} -h \Big)\frac{\int^{\tau(x_n,t_0)+ h x_n} _
{\tau(x_n,t_0)} u^2_b (\eta)\rho_b (\eta) d\eta}
{\int^{\tau (x_n,t_0)+ hx_n} _ {\tau(x_n,t_0)} \rho_b(\eta)u_b(\eta)d\eta} \leq 1\,,
\end{equation*}
and as $n\to \infty$, we have
\begin{equation}
\limsup_{n \to \infty} \Big(\frac {1}{u(x_n, t_0)} -h \Big) u_b(t_0) \leq 1\,.
\label{e5.9}
\end{equation}
Similarly considering the inequality
 \begin{equation*}
G(\tau(x_n,t_0), y, x_n, t_0) < G(\tau(x_n,t_0)-h x_n, y, x_n, t_0),
\end{equation*}
and following the same analysis as above, we find
\begin{equation}
\liminf_{n \to \infty} \Big(\frac {1}{u(x_n, t_0)} +h \Big) u_b(t_0) \geq 1.
\label{e5.10}
\end{equation}
Since $h$ is arbitrary positive number, inequalities \eqref{e5.9} and \eqref{e5.10} result in
\begin{equation*}
\lim_{n \to \infty} \frac {1}{u(x_n, t_0)}  u_b(t_0) =1\,.
\end{equation*}
Thus we proved that  $\lim_{n \to \infty} u(x_n, t_0)=u_b (t_0)$ implying continuity of $u$ up to the boundary and validity of the boundary condition in this case.

If  $\tau_{*}(x_n,t_0)< \tau^{*}(x_n,t_0)$, then by Definition \ref{d2} we have
\begin{equation*}
u(x_n, t_0)=\dfrac {\int_{\tau_{*}(x_n,t_0)}^ {\tau^{*}(x_n,t_0)} \rho_b u^2_b}{\int_{\tau_{*}(x,t)}^ {\tau^{*}(x,t)} \rho_bu_b}
\end{equation*}
So if  $t_0$ is a Lebesgue point of $\rho_b u_b$ and $\rho_b u_b^2$, as is true for almost all $t_0$, we get $\lim_{n \to \infty} u(x_n, t_0)=u_b (t_0)$ again.\\

It remains to show that the boundary condition for $\rho$ holds. In this case, if the boundary potential $$G(\tau,x,t)=\int_{0}^{\tau} [x-u_b(\eta)(t-\eta)]\rho_b(\eta)u_b(\eta)d\eta$$ attains the minimum in $(0,\infty)$ at a point $\bar{\tau}$, then $\frac{\partial G}{\partial \tau}|_{\tau=\bar{\tau}}=0.$ That is
\[
\frac{\partial G}{\partial \tau}|_{\tau=\bar{\tau}}=\left(x-u_b(\bar{\tau})(t-\bar{\tau})\right)\rho_b(\bar{\tau})u_b(\bar{\tau})=0\,.
\]
Since $\tau_{*}(x,t), \tau^{*}(x,t)\nearrow t$  as $x \searrow 0$,  for $x$ close enough to $0$, the minimizing point lies in $(0, \infty)$.
Consider the function $f:\mathbb{R^+}\times[0,\infty)\to\mathbb{R}$ where $f$ is defined as
\[
f(x,\tau)=x-u_b(\tau)(t-\tau)\,.
\]
Then $\frac{\partial f}{\partial \tau}=-(t-\tau) u_b^{\prime}(\tau)+u_b(\tau)$, $f(0,t)=0$ and $\frac{\partial f}{\partial \tau}(0,t)>0.$ Thus by the implicit function theorem there exists a neighborhood of $(0,t)$ where $f(x,\tau)=0$ has a unique solution. That gives the unique minimizer of the boundary potential $G(\tau,x,t)$ and thus $\tau_*(x,t)$ is a continuously differentiable function of $x$ in some neighborhood of $0$. Now from the relation $u(x,t)=\frac{x}{t-\tau_*(x,t)}$, we see that $\lim_{x\searrow 0}\frac{\partial}{\partial x}\tau_*(x,t)=-\frac{1}{u_b(t)}.$
Consequently,
\begin{equation*}
\lim_{x\searrow 0}\rho(x,t)=\lim_{x\searrow 0}\frac{\partial}{\partial x}m(x,t)
=-\lim_{x\searrow 0}\rho_b(\tau_*(x,t))u_b(\tau_*(x,t))\frac{\partial}{\partial x}\tau_*(x,t)
=\rho_b(t)\,.
\end{equation*}
This completes the verification of initial and boundary condition for $(u, \rho)$.
\end{proof}
The next theorem concerns the global conservation of mass and momentum for our solutions.
\begin{theorem}
Let $\rho_0\in L^1([0, \infty))$, then the solution given in Definition~\ref{def:sol} conserves the total mass, i.e.
\[
\forall t\geq 0\colon \int_0^\infty\rho(dx,t)=\int_0^\infty \rho_0(x) dx+\int_0^t \rho_b(\eta)u_b(\eta) d\eta \,.
\]
Moreover, the total momentum is conserved \emph{but only for times without influx to the boundary}. More precisely we have for $t>0\colon$
\[
\int_0^\infty\!\!\!\! u(x,t) \rho(dx,t)\,\left\{
\begin{aligned}
&\!= &&\!\!\int_0^\infty\!\! \rho_0(x)u_0(x) dx&&\!\!\!\!+\int_0^t \rho_b(\eta)u_b^2(\eta) d\eta  &&\text{if }F(0,t)\geq G(0,t)\\
&\!\geq&&\!\!\int_0^\infty \!\!\rho_0(x)u_0(x) dx&&\!\!\!\!- \int_0^t \rho_b(\eta)u_b^2(\eta) d\eta &&\text{if }F(0,t)<G(0,t)\,.
\end{aligned}\right.
\]
\end{theorem}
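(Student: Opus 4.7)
The plan is to reduce the total mass and momentum to boundary values of the potentials $m$ and $q$, using $\rho = \partial_x m$ on $(0,\infty)$ together with the Dirac contribution at $x=0$, and the Radon--Nikodym identity $dq = u\,dm$ from Lemma~\ref{l51}. For the mass, the decomposition $\int_0^\infty \rho(dx,t) = [m(0+,t) - m(0,t)] + \int_{0+}^\infty \rho(dx,t)$ telescopes to $m(\infty,t) - m(0,t)$, irrespective of whether a Dirac is present. Lemma~\ref{lnew}(3) gives $\tau_*(0,t) = t$, so Definition~\ref{d3} yields $m(0,t) = -\int_0^t \rho_b u_b\,d\eta$. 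For $x$ sufficiently large $F(x,t) < G(x,t)$ (since $F$ is decreasing and $G$ increasing in $x$) and $y_*(x,t) \to \infty$, so $m(\infty,t) = \int_0^\infty \rho_0\,dx$. This gives the mass conservation identity.

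For the momentum, I would decompose analogously,
\[
\int_0^\infty u(x,t)\,\rho(dx,t) = u(0,t)\bigl[m(0+,t) - m(0,t)\bigr] + q(\infty,t) - q(0+,t)\,,
\]
the continuous part coming from $dq = u\,dm$ on $(0,\infty)$. Limits yield $q(\infty,t) = \int_0^\infty \rho_0 u_0\,dx$ and, by Definition~\ref{def:qE}, $q(0,t) = -\int_0^t \rho_b u_b^2\,d\eta$. I would then treat the three cases separately. When $F(0,t) > G(0,t)$, the boundary-only branch of Definition~\ref{d3} is continuous at $x=0$, so there is no Dirac, $q(0+,t) = q(0,t)$, and the equality follows immediately. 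When $F(0,t) = G(0,t)$, Definition~\ref{d2} (together with $\tau^*(0,t) = t$) provides the explicit value
\[
u(0,t) = \frac{\int_0^t \rho_b u_b^2 + \int_0^{y^*(0,t)} \rho_0 u_0}{\int_0^t \rho_b u_b + \int_0^{y^*(0,t)} \rho_0}\,,
\]
and multiplying by the Dirac mass $\int_0^{y^*(0,t)} \rho_0 + \int_0^t \rho_b u_b$ yields exactly $\int_0^t \rho_b u_b^2 + \int_0^{y^*(0,t)} \rho_0 u_0$, which combines with the continuous part $\int_0^\infty \rho_0 u_0 - \int_0^{y^*(0,t)} \rho_0 u_0$ to give the required equality.

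Finally, when $F(0,t) < G(0,t)$, Definition~\ref{d2} forces $u(0,t) = 0$, so the Dirac contributes nothing and the total equals $\int_0^\infty \rho_0 u_0 - \int_0^{y^*(0,t)} \rho_0 u_0$. The claimed inequality then reduces to $\int_0^{y^*(0,t)} \rho_0 u_0 \leq \int_0^t \rho_b u_b^2$, which I would deduce from the strict inequality $F(0,t) < G(0,t)$ by writing the two sides explicitly as $t\int_0^{y^*(0,t)} u_0 \rho_0 + \int_0^{y^*(0,t)} \eta\,\rho_0$ and $-\int_0^t (t-\eta) u_b^2 \rho_b$: since the second summand on the left is non-negative and the right-hand side is non-positive, it follows that $\int_0^{y^*(0,t)} u_0 \rho_0 < 0 \leq \int_0^t u_b^2 \rho_b$. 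The main obstacle will be the bookkeeping in the $F(0,t) = G(0,t)$ case, where both the isolated-point scenario (where $F < G$ for $x>0$ small) and the rarefaction scenario (where $F = G = 0$ on an interval $[0,r(t)]$ and $y^*(0,t) = 0$ by Lemma~\ref{lemF=G}) must be checked to confirm that the same explicit formula for $u(0,t)$ applies and that the cancellation above indeed goes through.
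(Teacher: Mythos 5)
Your proposal is correct and takes essentially the same route as the paper: reduce total mass and momentum to $m(\infty,t)-m(0,t)$ and $q(\infty,t)-q(0+,t)$ plus the Dirac contribution at $x=0$, split into the three cases according to the sign of $F(0,t)-G(0,t)$, cancel the Dirac mass against the explicit formula for $u(0,t)$ when $F(0,t)=G(0,t)$, and obtain the momentum inequality from the potential inequality $F(0,t)<G(0,t)$. The only differences are cosmetic: the paper explicitly verifies right-continuity of $m$ and $q$ at $x=0$ when $F(0,t)>G(0,t)$ (via monotonicity and semicontinuity of $\tau_*$) where you assert it, and your sign observation proving $\int_0^{y^*(0,t)}\rho_0u_0\le\int_0^t\rho_bu_b^2$ is a slightly more direct packaging of the paper's chain of inequalities.
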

\begin{proof}
First we prove the mass conservation property. We start in the situation, when $F(0,t)>G(0,t)$.
Since $F$ and $G$ are continuous functions, in a neighborhood of $(0,t)$ we have $F(x,t)>G(x,t)$ and thus $m(x,t)=-\int_{0}^{\tau_*(x,t)}u_b(\eta)\rho_b(\eta)d\eta.$ Note that $\tau_*(x,t)$ is decreasing in $x$ and $m(x,t)$ is increasing in $x$. Moreover, $m(x,t)$ is upper semicontinuous in this case as we shall prove now.\\
For this purpose consider a sequence $z_n=(x_n,t_n)\to z=(x,t).$ Then,
\begin{multline*}
\limsup_{n\to \infty} m(z_n)=-\lim_{n\to \infty}\,\sup_{n\geq k}\int_0^{\tau_*(z_n)}\rho_b(\eta)u_b(\eta)d\eta= \\
=-\lim_{n\to \infty}\int_0^{\sup\limits_{n\geq k}\tau_*(z_n)}\rho_b(\eta)u_b(\eta)d\eta=-\int_0^{\lim\limits_{n\to \infty}\sup\limits_{n\geq k}\tau_*(z_n)}\rho_b(\eta)u_b(\eta)d\eta\,.
\end{multline*}
By lower semicontinuity we have $ \tau_*(z)\leq\liminf_{n\to \infty}\tau_*(z_n)\leq \limsup_{n\to \infty}\tau_*(z_n)$. Thus we conclude
\begin{equation*}
\limsup_{n\to \infty} m(z_n)=-\int_0^{\lim\limits_{n\to \infty}\sup\limits_{n\geq k}\tau_*(z_n)}\rho_b(\eta)u_b(\eta)d\eta \leq -\int_0^{\tau_*(z)}\rho_b(\eta)u_b(\eta)d\eta=m(z)\,.
\end{equation*}
Now $m(x,t)$ is right continuous in $x$ since it is increasing and upper semicontinuous. This leads to the desired mass conservation for $F(0,t)>G(0,t)$,
\begin{equation*}
\int_0^\infty \rho(dx,t)=\int_{x\in(0,\infty)}\hspace{-0.8cm}m(dx,t)=m(\infty, t)-m(0,t)=\int_{0}^{\infty}\rho_0(x)dx+\int_{0}^t u_b(\eta)\rho_b(\eta)d\eta\,.
\end{equation*}
For the other case when $F(0,t)\leq G(0,t)$ we decompose the integral in the following manner:
\begin{equation*}
\begin{split}
\int_{0}^{\infty}\rho(dx,t)&=m(0+,t)-m(0,t)+\int_{x\in(0,\infty)}\hspace{-0.8cm}m(dx,t)=m(\infty,t)-m(0,t)=\\
&=\int_{0}^{\infty}\rho_0(x)dx+\int_{0}^{t} u_b(\eta)\rho_b(\eta)d\eta\,.
\end{split}
\end{equation*}
This finishes the proof of mass conservation.\\

Now we show momentum conservation. The momentum  $\rho. u$ is understood as
\begin{equation}
\rho(x,t)u(x,t)=\left\{\begin{aligned}
&u m_x= q_x &&\text{for } x,t>0\\
&\rho_b (t)u_b(t) &&\text{for }x=0\text{ and }F(0,t)>G(0,t)\\
&\ 0 &&\text{for }x=0\text{ and }F(0,t)< G(0,t)\\
& \rho(0,t)u(0,t)&&\text{for }x=0\text{ and } F(0,t)= G(0,t)\,.
\end{aligned}\right.
\end{equation}
Note that in the last case this is the Dirac mass at zero, multiplied by the non zero velocity expected at this point.
This is consistent with the point-wise interpretation as $\rho u$ up to the boundary if $u$ is as in Definition~\ref{d2} and $\rho$ as in Definition~\ref{def:sol}.
When $F(0,t)>G(0,t)$, by similar argument as in the proof of mass conservation one can show that $q(x,t)$ is right continuous in $x$. Hence we find
\begin{equation*}
\begin{split}
\int_{0}^{\infty} u\rho(dx,t)&=\int_{x\in (0,\infty)}q(dx,t)=q(\infty,t)-q(0,t)=\\
&=\int_{0}^{\infty}\rho_0(x)u_0(x)dx+\int_0^t\rho_b(\eta)u_b(\eta)d\eta\,.
\end{split}
\end{equation*}
Now we consider the case $F(0,t)< G(0,t)$. In this case
\begin{equation*}
\begin{split}
\int_{0}^{\infty}u\rho(dx,t)&=\int_{x\in(0,\infty)}q(dx,t)+0=q(\infty,t)-q(0+,t)=\\
&=\int_{0}^{\infty}\rho_0(x)u_0(x)dx-\int_0^{y^*(0,t)}\rho_0(\eta)u_0(\eta)d\eta\,.
\end{split}
\end{equation*}
For the last equality, Remark~\ref{rem:limitstaustar} was used.
Since we are in the case $F(0,t)<G(0,t)$, we have
\[
\int_{0}^{y_*(0,t)}(tu_0(\eta)+\eta)\rho_0(\eta)d\eta\leq -\int_0^t(t-\eta)u_b^2(\eta)\rho_b(\eta)d\eta\,,\\
\]
leading to
\begin{multline*}
0\geq-t\int_0^t u_b^2(\eta)\rho_b(\eta)d\eta-\int_0^{y_*(0,t)}\eta \rho_0(\eta)d\eta\geq\\
\geq\int_0^{y_*(0,t)}tu_0(\eta)\rho_0(\eta)d\eta-\int_0^t \eta u_b^2(\eta)\rho_b(\eta)d\eta\geq\\
\geq t\left(\int_0^{y_*(0,t)}u_0(\eta)\rho_0(\eta)d\eta-\int_0^t \rho_b(\eta)u_b^2(\eta)d\eta\right)\,.
\end{multline*}
This implies
\[
\int_0^\infty\rho(x,t)u(x,t) dx \geq \int_0^\infty \rho_0(x)u_0(x)dx-\int_0^t \rho_b(\eta)u_b^2(\eta) d\eta\,.
\]
Finally we have to consider the case $F(0,t)=G(0,t)$. First note that we have $y_*(0+,t)=y^*(0,t)$, again by Remark~\ref{rem:limitstaustar}.
Now for $F(0,t)=G(0,t)$ we have
\begin{multline*}
\int_{0}^{\infty}u\rho(dx,t)=\\
=\int_{x\in(0,\infty)}q(dx,t)+\lim_{x\searrow 0}\big(m(x,t)-m(0,t)\big)\frac{\int_{0} ^ {\tau^{*}(0,t)}\rho_b u^2_b+\int_{0} ^ {y^{*}(0,t)}\rho_0 u_0}{\int_{0} ^ {\tau^{*}(0,t)}\rho_b u_b +\int_{0} ^ {y^{*}(0,t)}\rho_0}=\\
=q(\infty,t)-q(0+,t)+\left(\int_0^{y_*(0+,t)}\rho_0 d\eta+\int_0^t\rho_b u_b d\eta\right)\frac{\int_{0} ^ {t}\rho_b u^2_b+\int_{0} ^ {y^{*}(0,t)}\rho_0 u_0}{\int_{0} ^ {t}\rho_b u_b +\int_{0} ^ {y^{*}(0,t)}\rho_0}=\\
=\int_{0}^{\infty}\rho_0(x)u_0(x)dx-\int_0^{y_*(0+,t)}\rho_0(\eta)u_0(\eta)d\eta+\int_{0} ^ {t}\rho_b u^2_bd\eta+\int_{0} ^ {y^{*}(0,t)}\rho_0 u_0d\eta=\\
=\int_{0}^{\infty}\rho_0(x)u_0(x)dx+\int_{0} ^ {t}\rho_b u^2_bd\eta\,,
\end{multline*}
completing the proof.
\end{proof}
\section{Entropy condition}
In this section we will show that the solution we constructed is an entropy solution up to the boundary.
\begin{theorem}
Let $t>0$ and $x>0$. If $x$ is a point of discontinuity of $u(\cdot,t)$, then we have
\[
u(x-,t)>u(x,t)>u(x+,t)\,.
\]
Moreover, for almost all $t>0$ we have that $u(\cdot,t)$ is either right continuous at $x = 0$ or
\[
u(0,t)>u(0+,t)\,.
\]
\end{theorem}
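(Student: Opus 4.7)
The plan is to split the theorem into the interior claim at $x>0$ and the boundary claim at $x=0$, handling each by case analysis on the sign of $F(x,t)-G(x,t)$. For the interior, the one-sided limits $u(x\pm,t)$ exist by Remark~\ref{rem:limitstaustar}, which gives $y_*(x',t), y^*(x',t) \to y^*(x,t)$ as $x'\searrow x$ and $\to y_*(x,t)$ as $x'\nearrow x$, with analogous statements for $\tau_*, \tau^*$. Continuity of $F$ and $G$ (Lemma~\ref{l0}) preserves a strict relation $F<G$ or $F>G$ in a neighborhood, so the appropriate formula from Definition~\ref{d2} applies on each side of $x$.

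In the case $F(x,t)<G(x,t)$, the averaging formulas combined with these limits yield
\begin{equation*}
u(x+,t)=\frac{x-y^*(x,t)}{t},\qquad u(x-,t)=\frac{x-y_*(x,t)}{t}.
\end{equation*}
The identity $F(y_*,x,t)=F(y^*,x,t)$, written as $\int_{y_*}^{y^*}[tu_0+\eta-x]\rho_0\,d\eta=0$, allows one to rewrite $u(x,t)=(x-\bar\eta)/t$, where $\bar\eta$ is the $\rho_0$-weighted mean of $\eta$ on $[y_*(x,t),y^*(x,t)]$. Since $\rho_0>0$, we have $y_*<\bar\eta<y^*$ strictly whenever $y_*<y^*$, giving $u(x-,t)>u(x,t)>u(x+,t)$; if $y_*(x,t)=y^*(x,t)$, the same formulas force continuity of $u$ at $x$, so the discontinuity hypothesis produces the strict gap. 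The case $F(x,t)>G(x,t)$ is dual with $\tau_*,\tau^*$, using $G(\tau_*,x,t)=G(\tau^*,x,t)$ to express $u(x,t)$ as $x$ divided by a $\rho_b u_b$-weighted mean of $t-\eta$ on $[\tau_*(x,t),\tau^*(x,t)]$. The mixed case $F(x,t)=G(x,t)$ at an isolated point splices the two sides: from the right one inherits the $F<G$ formula, from the left the $F>G$ formula, and the intermediate character of $u(x,t)$ follows from $F(y^*,x,t)=G(\tau^*,x,t)$ together with the mixed formula of Definition~\ref{d2}, reducing after algebra to an inequality that is strictly positive whenever $y^*(x,t)>0$ or $\tau^*(x,t)>0$.

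For the boundary claim at $x=0$, three situations arise. If $F(0,t)>G(0,t)$, Theorem~\ref{thm:boundary} gives $\lim_{x\to 0+}u(x,t)=u_b(t)=u(0,t)$ for almost every $t$, which is precisely right continuity. If $F(0,t)<G(0,t)$, Definition~\ref{d2} sets $u(0,t)=0$ while part~(1) of Lemma~\ref{lem:vel0} yields $u(0+,t)<0$, producing $u(0,t)>u(0+,t)$. The case $F(0,t)=G(0,t)$ is handled by noting $\tau^*(0,t)=t$, so the value $u(0,t)$ coincides with the limit from the right obtained from the mixed formula. The main obstacle will be the isolated $F=G$ situation in the interior, where the strict inequalities demand an ad hoc algebraic reduction together with verification that $y^*$ or $\tau^*$ is nonzero; in parallel, the Lebesgue-point qualifier inherited from Theorem~\ref{thm:boundary} must be tracked through the boundary argument to justify the ``almost all $t$'' clause.
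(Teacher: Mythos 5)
Your overall structure (one-sided limits via Remark~\ref{rem:limitstaustar}, case split on the sign of $F-G$, algebra on the potentials) matches the paper, and your treatment of the pure cases is sound: for $F<G$ the weighted-mean argument is a nice self-contained substitute for the paper's citation of \cite{Wang97}, and for $F>G$ the same device works (minor slip: the mean of $t-\eta$ that represents $x/u(x,t)$ is weighted by $\rho_bu_b^2$, not $\rho_bu_b$, which does not affect the conclusion). The boundary cases $F(0,t)>G(0,t)$ and $F(0,t)<G(0,t)$ are handled exactly as in the paper.

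The genuine gap is in the isolated $F=G$ case, and precisely in the left inequality $u(x-,t)>u(x,t)$. Write $A=\int_0^{y^*}\rho_0$, $B=\int_0^{y^*}\rho_0u_0$, $C=\int_0^{y^*}\eta\rho_0\,d\eta$, $P=\int_0^{\tau^*}\rho_bu_b$, $Q=\int_0^{\tau^*}\rho_bu_b^2$, $R=\int_0^{\tau^*}\eta\rho_bu_b^2\,d\eta$. The identity $F(y^*,x,t)=G(\tau^*,x,t)$ reads $t(B+Q)=x(A+P)+R-C$. With it, $u(x,t)>u(x+,t)=(x-y^*)/t$ reduces to $C-R<y^*(A+P)$, which is indeed automatic from $\rho_0,\rho_b,u_b>0$ once $y^*>0$ or $\tau^*>0$ — that half of your claim is fine. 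But $u(x,t)<u(x-,t)=x/(t-\tau^*)$ reduces to $R-\tau^*Q<C+\tau^*B$, i.e.\ to $G(\tau^*(x,t),0,\tau^*(x,t))<F(y^*(x,t),0,\tau^*(x,t))$, and the right-hand side $\int_0^{y^*}(\eta+\tau^*u_0(\eta))\rho_0\,d\eta$ is \emph{not} nonnegative in general: with $u_0<0$ on $[0,y^*]$ it is negative (in the detached-delta example of Figure~\ref{bildInBd} it equals $2-4\tau^*$), so the comparison is a genuinely non-algebraic fact about where the discontinuity can actually sit (there it holds only because $\tau^*(x,t)\geq 4+2\sqrt3$ once the delta has left the boundary). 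This extra potential comparison at the vertex $(0,\tau^*(x,t))$ is exactly the separate inequality the paper asserts as its first display in this case; your sketch replaces it by "algebra that is strictly positive whenever $y^*>0$ or $\tau^*>0$", which is false for this side, so the step as proposed would fail. (A smaller point: at $x=0$ with $F(0,t)=G(0,t)$, the value $u(0,t)$ does not in general coincide with $u(0+,t)$; what one needs, and what the interior-type algebra gives, is the inequality $u(0,t)>u(0+,t)$ when they differ.)
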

\begin{proof}
First let $x>0$, $t>0$.
If $(x,t)$ is a point where $F(x,t)<G(x,t)$ and $y_{*}(x,t)=y^{*}(x,t)$ then $u(x,t) = (x - y_*(x,t))/t$ is continuous at $(x,t)$.
On the other hand if we have $y_{*}(x,t) < y^{*} (x,t)$ singularities can form and this case is considered in \cite{Wang97}.\\
Now we look at points $(x,t)$, where $F(x,t)> G(x,t)$ and we  have $\tau_{*}(x,t) < \tau^{*} (x,t)$.
Using again Remark~\ref{rem:limitstaustar} we have that
$u(x-, t)= \frac{x}{t-\tau^{*}(x,t)}$ and $u(x+, t)= \frac{x}{t-\tau_{*}(x,t)}$. From
\[
G(\tau_{*}(x,t), x, t)=G(\tau^{*}(x,t), x, t)\,,
\]
we know that
\[
\int_{\tau_{*}(x,t)}^{\tau^{*}(x,t)} [x-u_b(\eta)(t-\eta)]\rho_b(\eta)u_b(\eta)d\eta=0\,.
\]
This implies
\[
 \frac{x}{t-\tau_{*}(x,t)}\int_{\tau_{*}(x,t)}^{\tau^{*}(x,t)} \rho_b(\eta)u_b(\eta) d\eta=\int_{\tau_{*}(x,t)}^{\tau^{*}(x,t)} \frac{(t-\eta)} {t-\tau_{*}(x,t)} u^2_b(\eta) \rho_b(\eta) d\eta\,,
\]
from which we conclude $u(x+, t) < u(x,t)$. The other inequality also follows easily if one divides the above equality by $(t-\tau^{*}(x,t))$.\\
The case $F(x,t)= G(x,t)$ is slightly more complicated. If $F(x,t)=G(x,t)$ in an interval then we are in the case of the rarefaction wave emanating from zero and the solution is continuous. Thus we can assume that the equality holds in an isolated point.
As already observed above, we have that $u(x-, t)=\frac{x}{t-\tau^{*}(x,t)}$.
Note that
\[
F(y^{*} (x,t), x, t)- G(\tau^{*}(x,t), x, t)<
F(y^{*} (x,t), 0, \tau^{*}(x,t))- G(\tau^{*}(x,t),0, \tau^{*}(x,t))\,,
\]
since the term on the left side is zero.
The inequality above yields
\begin{multline*}
\int_0^{y^*(x,t)}\!\!\Big(\big(t-\tau^*(x,t)\big)u_0(\eta)-x\Big)\rho_0(\eta)d\eta<\\
<\int_0^{\tau^*(x,t)}\!\!\Big(x-\big(t-\tau^*(x,t)\big)u_b(\eta)\Big)\rho_b(\eta)u_b(\eta)d\eta\,
\end{multline*}
and dividing by $t-\tau^*(x,t)$ we get $u(x,t) < \frac{x}{t-\tau^{*}(x,t)}$, using the definition of $u(x,t)$.\\
To derive the inequality for $u(x+, t)= \frac{x-y^{*}(x,t)}{t}$ we proceed in a similar way, starting from
\[
G(\tau^{*} (x,t), x, t)- F(y^{*}(x,t),x, t)<G(\tau^{*}(x,t),y^{*} (x,t), 0)- F(y^{*}(x,t),y^{*}(x,t),0)\,.
\]
This completes the proof in the interior.\\
It remains to consider the boundary $x=0$. For almost all $t>0$ such that $F(0,t)>G(0,t)$ we know from Theorem~\ref{thm:boundary} that $\lim_{x \searrow 0} u(x,t) = u_b(t)$, that is $u(\cdot,t)$ is right continuous. For $F(0,t)<G(0,t)$, Lemma~\ref{lem:vel0} implies that $u(0+,t)<0$ while $u(0,t)=0$ according to Definition~\ref{d2}.
For the only remaining case $F(0,t)=G(0,t)$, the entropy condition follows from the same argument as in the interior, since $u(0,t)$ is defined in the same manner in this situation.
\end{proof}

\section{Some explicit examples}
First we present an example that includes a rarefaction wave emerging from the origin and a Dirac delta with time-dependent mass forming from the initial data due to a downward jump in the velocity. Initial and boundary data can be read off from Figure~\ref{fig:raref}, but we repeat them for completeness. We choose the initial and boundary density to be equal to $1$ everywhere and the initial velocity to jump from $u_0=2$ to $u_0=-2$ at position $x=2$. The boundary velocity is chosen constant and smaller than the initial velocity near zero in order  to generate the rarefaction wave. The actual value is $1$. To make the construction of the solution more easy to follow we present $\mu=\min(F,G)$ in the left half of Figure~\ref{fig:raref}, indicating $F$ or $G$, whichever is smaller. The right half is the solution that we calculate from the derivatives of $\mu$ according to Lemma~\ref{lem:integrals}. The position of the Dirac delta is indicated by the bold line. The boundary conditions $\rho_b=1$, $u_b=1$ are satisfied up to $t=16/3$. At this time the $\delta$ hits the boundary and stays there, increasing its mass, as influx from the right continues. We have $m=-t$ along the $t$-axis.
\begin{figure}[ht!]
\begin{minipage}{0.5\textwidth}
\begin{tikzpicture}[scale=1.55]
\draw[->] (0,0)--(3.4,0) node[anchor=north]{$x$};
\draw[->] (0,0)--(0,6.3) node[anchor=east]{$t$};
\draw  (1.3,0)node [anchor=north] {\parbox{2cm}{$\rho_0=1$\\$u_0=2$}};
\draw  (3.0,0)node[anchor=north] {\parbox{2cm}{$\rho_0=1$\\$u_0=-2$}};
\draw (2,0) node[anchor=north] {$2$};
\draw (2,-.05)--(2,0.05);
\draw(-0.35,3.5)node[anchor=north, rotate=90]{$\rho_b=1$, $u_b=1$};
\draw (-0.05,1)--(0.05,1);
\draw (0,1) node[anchor=east] {$1$};
\draw (-0.05,16/9)--(0.05,16/9);
\draw (0,16/9) node[anchor=east] {$\tfrac{16}{9}$};
\draw (-0.05,16/3)--(0.05,16/3);
\draw (0,16/3) node[anchor=east] {$\tfrac{16}{3}$};
\draw (2,0)--(2,1);
\draw  (2,0.8)node[anchor=north, rotate=90]{\parbox{2cm}{$x=2$}};
\draw[scale=1, domain=1:16/9, smooth, variable=\t] plot ({-2*\t+4*sqrt(\t)},{\t});
\draw (2.67,1.3) node[anchor=south]{$x=-2t+4\sqrt{t}$};
\draw (16/9,16/9)--(0,16/3);
\draw (1.4, 3.5)node[anchor=north,rotate=-61]{$x=-\frac{1}{2}t+\frac{8}{3}$};
\draw (0,0)--(2,1);
\draw (1.5,0.77)node[anchor=north,rotate=27]{$x=2t$};
\draw (0,0)--(16/9,16/9);
\draw (1,1.4)node[anchor=north,rotate=45]{$x=t$};
\draw (1.25, 0.45)node[anchor=north, blue]{$F=\frac{-(2t-x)^2}{2}$};
\draw (1.1, 1.0)node[anchor=north, rotate=37, blue]{$F=G=0$};
\draw (0.7,2.4)node[anchor=north, blue]{$G=\frac{-(t-x)^2}{2}$};
\draw(1.8, 5)node[anchor=north, blue]{$F=\frac{-(2t-x)^2}{2}+8t<G=0$};
\end{tikzpicture}
\end{minipage}\hfill
\begin{minipage}{0.5\textwidth}
\begin{tikzpicture}[scale=1.55]
\draw[->] (0,0)--(3.4,0) node[anchor=north]{$x$};
\draw[->] (0,0)--(0,6.3) node[anchor=east]{$t$};
\draw  (1.3,0)node [anchor=north] {\parbox{2cm}{$\rho_0=1$\\$u_0=2$}};
\draw  (3.0,0)node[anchor=north] {\parbox{2cm}{$\rho_0=1$\\$u_0=-2$}};
\draw (2,0) node[anchor=north] {$2$};
\draw (2,-.05)--(2,0.05);
\draw(-0.35,3)node[anchor=north, rotate=90]{$\rho_b=1$, $u_b=1$};
\draw [line width=0.3mm] (2,0)--(2,1);
\draw[scale=1, line width=0.3mm, domain=1:16/9, smooth, variable=\t] plot ({-2*\t+4*sqrt(\t)},{\t});
\draw [line width=0.3mm] (16/9,16/9)--(0,16/3);
\draw [line width=0.3mm] (0,16/3)--(0,6.3);
\draw (0,0)--(2,1);
\draw (0,0)--(16/9,16/9);
\draw (2,5) node[anchor=north]{\parbox{3cm}{$\textcolor{blue}{m=x+2t}\\\textcolor{red}{\rho=1,\,u=-2}$}};
\draw (1.6,0.85)node[anchor=north,rotate=25]{\parbox{3cm}{\textcolor{blue}{$m=x-2t$}\\[-1mm]\textcolor{red}{$\rho=1,u=2$}}};
\draw (1.15,1.27) node[anchor=north,rotate=25]{\parbox{2cm}{\flushright\textcolor{blue}{$m=0$}\\\textcolor{red}{$\rho=0, u=\frac{x}{t}$}}};
\draw (0.2,2.8)node[anchor=north,rotate=90]{\parbox{3cm}{\textcolor{blue}{$m=x-t$}\\\textcolor{red}{$\rho=1,u=1$}}};
\draw (2,0.5)node[anchor=north, rotate=90,red]{$\rho=4t\delta$};
\draw (2.42,1.3)node[anchor=south,red]{$\rho=4\sqrt{t}\delta$};
\draw (1.3,3.5)node[anchor=north,rotate=-61,red]{$\rho=3t\delta$};
\draw (0.32,5.83)node[anchor=north,rotate=270,red]{$\rho=3t\delta$};
\draw(-0.35,4.5)node[anchor=north, rotate=90,blue]{$m=-t$};
\end{tikzpicture}
\end{minipage}
\caption{Dirac delta generated from initial data, absorbing rarefaction wave and meeting the boundary in finite time}\label{fig:raref}
\end{figure}
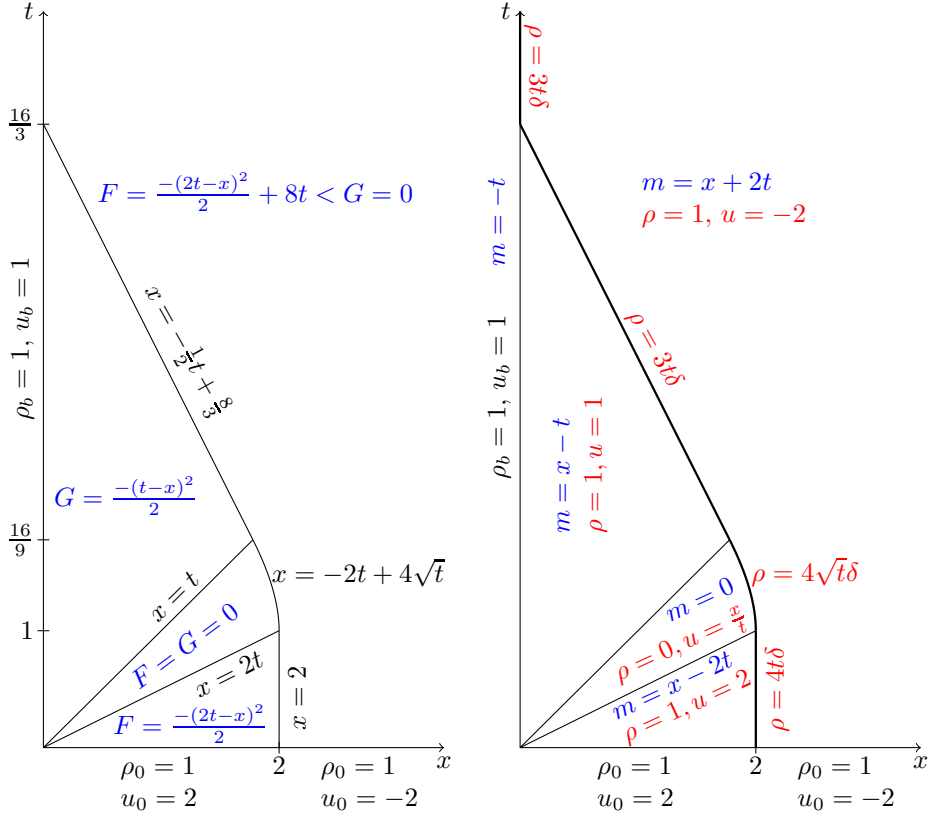

The next example, presented in Figure~\ref{bildInBd}, is interesting since it is a case where a Dirac delta at $x = 0$ forms driven by influx from the initial data. Moreover, the example is constructed in such a way that the influx from the initial manifold stops after some time while the boundary data continues to give rise to positive momentum influx. Thus the Dirac delta indeed leaves $x=0$ and moves inward when the influx of momentum from the boundary is sufficient. Note that the Dirac delta does not leave $x=0$ with zero velocity. Thus our solution can be interpreted as one for a sticky boundary. Moreover, it can be seen from this that the solution does not satisfy a semigroup property. The semigroup property could be restored by prescribing a negative momentum at the boundary in such cases, which however would lead to the momentum no longer being the product of mass and velocity. We choose not to do that for physical reasons---even if the momentum could be considered as a kind of dummy variable in this case. Note also that strictly speaking the situation depicted in Figure~\ref{bildInBd} is not covered by our theory since we chose $\rho_0=0$ for $x>2$. However, the solution is similar if $\rho_0$ is very small for $x>2$. The boundary data is again constant $\rho_b=u_b=1$ and the initial data for $x<2$ is chosen to be $\rho_0=1$ and $u_0=-2$. This leads to the formation of the Dirac delta at $x=0$ because $\rho_0u_0<\rho_bu_b$. Again we find $m=-t$ along the whole $t$-axis and we depict the position of the Dirac delta with a bold line in the right part of Figure~\ref{bildInBd}.

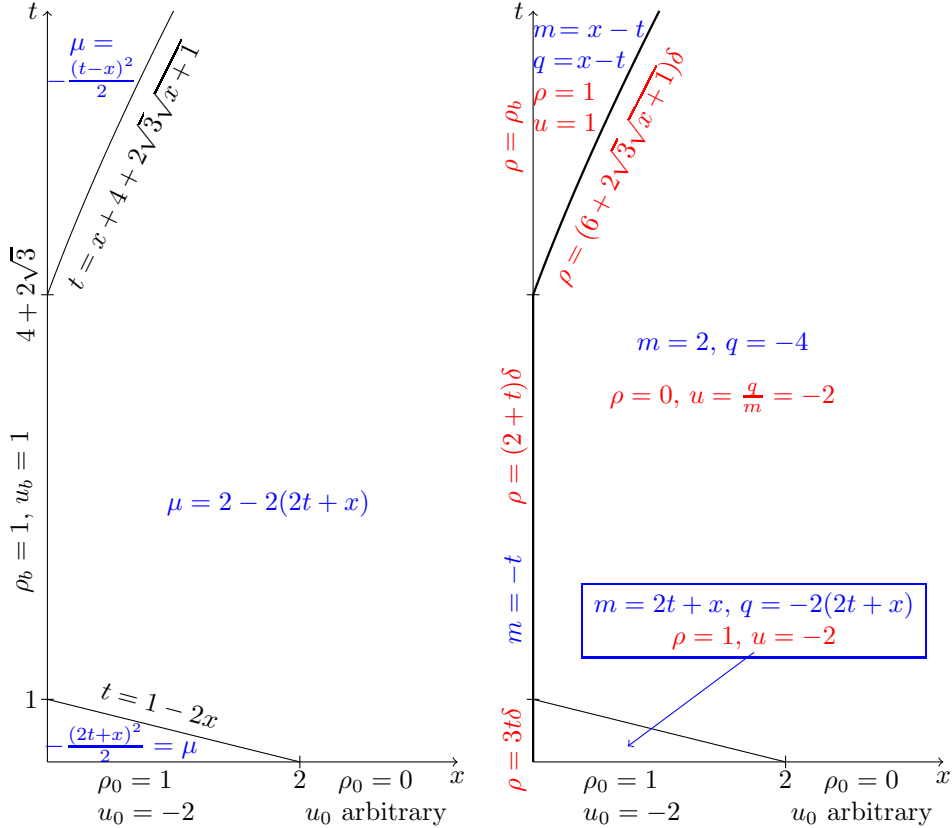
\begin{figure}[ht!]
\begin{minipage}{0.49\textwidth}
\begin{tikzpicture}[scale=0.83]
\draw[->] (0,0) -- (6.5,0) node[anchor=north] {$x$};
\draw[->] (0,0) -- (0,12) node[anchor=east] {$t$};
\draw[scale=1, domain=0:2, smooth, variable=\x] plot ({\x},{\x+4+sqrt(12*(\x+1))});
\draw (1,9.7) node[anchor=north, rotate=64] {$t=x+4+2\sqrt{3}\sqrt{x+1}$};
\draw (2,0) node[anchor=north] {\parbox{2cm}{$\rho_0=1$\\$u_0=-2$}};
\draw (5.2,0) node[anchor=north] {\parbox{2cm}{\centering$\rho_0=0$\\$u_0$ arbitrary}};
\draw (4,0) -- (0,1);
\draw (1.7,0.6) node[anchor=south,rotate=-15] {$t=1-2x$};
\node [rotate=90] at (-.4,4) {$\rho_b=1$, $u_b=1$};
\draw (4,-.1) -- (4,.1);
\draw (4,0) node[anchor=north] {$2$};
\draw (-.1,1) -- (.1,1);
\draw (0,1) node[anchor=east] {$1$};
\draw (-.1,7.464) -- (.1,7.464);
\draw (0,7.464) node[anchor=south, rotate=90] {$4+2\sqrt{3}$};
\draw (3.5,4.5) node[anchor=north,blue] {$\mu=2-2(2t+x)$} ;
\draw (1.16,0.76) node[anchor=north,blue] {$-\tfrac{(2t+x)^2}{2}=\mu$} ;
\draw (0.7,11.7) node[anchor=north,blue] {\parbox{2cm}{\centering $\mu=$\\$-\tfrac{(t-x)^2}{2}$}} ;
\end{tikzpicture}
\end{minipage}\hfill
\begin{minipage}{0.49\textwidth}
\begin{tikzpicture}[scale=0.83]
\draw[->] (0,0) -- (6.5,0) node[anchor=north] {$x$};
\draw[->] (0,0) -- (0,12) node[anchor=east] {$t$};
\draw (1,9.7) node[anchor=north, rotate=63, red] {$\rho=(6+2\sqrt{3}\sqrt{x+1})\delta$};
\draw (2,0) node[anchor=north] {\parbox{2cm}{$\rho_0=1$\\$u_0=-2$}};
\draw (5.2,0) node[anchor=north] {\parbox{2cm}{\centering$\rho_0=0$\\$u_0$ arbitrary}};
\draw (4,0) -- (0,1);
\node [rotate=90, red] at (-.3,0.2) {$\rho=3t\delta$};
\node [rotate=90, red] at (-.3,5.2) {$\rho=(2+t)\delta$};
\node [rotate=90, blue] at (-.3,2.6) {$m=-t$};
\node [rotate=90, red] at (-.3,10) {$\rho=\rho_b$};
\draw (4,-.1) -- (4,.1);
\draw (4,0) node[anchor=north] {$2$};
\draw (-.1,1) -- (.1,1);
\draw (-.1,7.464) -- (.1,7.464);
\draw (3,7) node[anchor=north, blue] {$m=2$, $q=-4$} ;
\draw (3,6.2) node[anchor=north, red] {$\rho=0$, $u=\tfrac{q}{m}=-2$} ;
\draw (3.5,3) node[anchor=north,blue] {\fbox{\parbox{4.3cm}{\centering $m=2t+x$, $q=-2(2t+x)$\\\textcolor{red}{$\rho=1$, $u=-2$}}}};
\draw [->, blue] (3.5,1.75) -- (1.5,0.25);
\draw (0.9,12) node[anchor=north, blue] {\parbox{1.5cm}{$m\!=x-t$\\$q=\!x\!-\!t$}} ;
\draw (0.9,11) node[anchor=north, red] {\parbox{1.5cm}{$\color{red}{\rho=1}$\\$u=1$}};
\draw[scale=1, domain=0:2,line width=0.3mm,smooth, variable=\x] plot ({\x},{\x+4+sqrt(12*(\x+1))});
\draw [line width=0.3mm] (0,0)--(0,7.464);
\end{tikzpicture}
\end{minipage}\\
\vspace*{-.3cm}
\caption{Left: Potential $\mu=\min\{F,G\}$. Right: Solution, calculated from potential according to Lemma~\ref{lem:integrals} and Definition~\ref{def:sol}}.\label{bildInBd}
\end{figure}
Finally in Figure~\ref{fig:twoDel} we give an example where a Dirac delta forms due to a jump up in the boundary velocity. Moreover, we include a jump down in the initial data and the two discontinuities merge into a single Dirac mass. For the values we choose the Dirac delta does not reach the boundary again. More precisely, the initial and the boundary densities are $1$ everywhere, but the initial velocity jumps down from $1$ to $-2$ at $x=2$ and the boundary velocity jumps up from $1$ to a value of $2$ at $t=1$. We give separate plots for the potentials $F$ and $G$ and include dashed auxiliary lines to indicate the regions that have to be considered in the minimization with respect to $y$ and $\tau$. The solid lines separate regions where the formulas for the potentials actually differ. The solution is plotted in the lower right of Figure~\ref{fig:twoDel}. Indeed, it satisfies the initial and boundary conditions everywhere. The bold lines again indicate the positions of the Delta masses.
\begin{figure}[ht!]
\begin{minipage}[t]{0.48\textwidth}\vspace{0cm}
\begin{tikzpicture}[scale=1.1]
\draw[->] (0,0) -- (4,0) node[anchor=north] {$x$};
\draw[->] (0,0) -- (0,4) node[anchor=east] {$t$};
\draw (2,-.05) -- (2,.05);
\draw (2,0) node[anchor=north] {$2$};
\draw (0.05,1) -- (-.05,1);
\draw (0,1) node[anchor=east] {$1$};
\draw (2,0) -- (0,4);
\draw [dashed] (2,0) -- (0,1);
\draw [dashed] (2,0) -- (4,2);
\draw (0.7,1) node[anchor=north, blue] {$-\tfrac{(t-x)^2}{2}$};
\draw (3,1) node[anchor=north, blue] {$-\tfrac{(2t+6)^2-12t}{2}$};
\draw (1,2) node[anchor=south, rotate=-62.5] {$t=-2x+4$};
\draw (3.5,0) node[anchor=north] {\parbox{2cm}{$\rho_0=1$\\$u_0=-2$}};
\draw (1.2,0) node[anchor=north] {\parbox{2cm}{$\rho_0=1$\\$u_0=1$}};
\end{tikzpicture}
\end{minipage}
\hfill
\begin{minipage}[t]{0.48\textwidth}\vspace{0cm}
\begin{tikzpicture}[scale=1.1]
\draw[->] (0,0) -- (4,0) node[anchor=north] {$x$};
\draw[->] (0,0) -- (0,4) node[anchor=east] {$t$};
\draw (3,-.05) -- (3,.05);
\draw (3,0) node[anchor=north] {$3$};
\draw (0.05,1) -- (-.05,1);
\draw (0,1) node[anchor=east] {$1$};
\draw (0.05,3) -- (-.05,3);
\draw (0,3) node[anchor=east] {$3$};
\node [rotate=90] at (-.4,2.5) {\parbox{2cm}{$\rho_b=1$\\ $u_b=2$}};
\node [rotate=90] at (-.4,0.7) {\parbox{2cm}{$\rho_b=1$\\ $u_b=1$}};
\draw (0,0) --(3,3);
\draw [dashed] (3,3) -- (4,4);
\draw [dashed] (0,1) -- (4,3);
\draw [dashed] (0,1) -- (3,4);
\draw (0,1) -- (3,3);
\draw[scale=1, domain=3:4, smooth, variable=\x] plot ({\x},{\x/2+3/4+sqrt(\x/4-3/16)});
\draw	(3,3) circle[radius=1pt, black];
\fill	(3,3) circle[radius=1pt, black];
\node [rotate=32] at (1.9,2.55) {$t=\tfrac{2}{3}x+1$};
\node at (2.7,1) {\fbox{$t=\tfrac{2x+3+\sqrt{4x-3}}{4}$}};
\draw [->] (3,1.35) -- (3.8,3.4);
\draw (1.6,3.2) node[anchor=south,blue] {$-\tfrac{(2t-x)^2+2x-6t+3}{2}$};
\draw (2.7,1.8) node[anchor=south,blue] {$0$};
\draw (0.2,0.75) node[anchor=west,blue,rotate=37] {$-\tfrac{(t-x)^2}{2}$};
\draw (0.5,0.3) node[anchor=west,rotate=45] {$t=x$};
\end{tikzpicture}
\end{minipage}\\
\begin{minipage}[t]{0.48\textwidth}
\begin{tikzpicture}[scale=1.1]
\draw[->] (0,0) -- (4,0) node[anchor=north] {$x$};
\draw[->] (0,0) -- (0,4) node[anchor=east] {$t$};
\draw (2,-.05) -- (2,.05);
\draw (2,0) node[anchor=north] {$2$};
\draw (0.05,1) -- (-.05,1);
\draw (0,1) node[anchor=east] {$1$};
\draw (0,1) -- (9/8,7/4);
\draw (2,0)--(9/8,7/4);
\draw[scale=1, domain=1.75:4, samples=100, variable=\t] plot ({3/4+17/(32*\t-12)},{\t});
\draw (1.55,3) node[anchor=north, rotate=-85] {$t=\tfrac{3x+2}{8x-6}$};
\draw (0.8,1) node[anchor=north,blue] {$-\tfrac{(t-x)^2}{2}$};
\draw (2.7,3) node[anchor=north,blue] {$-\tfrac{(2t+x)^2-12t}{2}$};
\draw (0.8,2.8) node[anchor=north,blue,rotate=-90] {$-\tfrac{(2t-x)^2+2x-6t+3}{2}$};
\draw (9/8,7/4) circle[radius=1pt];
\fill (9/8,7/4) circle[radius=1pt];
\draw (9/8,7/4) node[anchor=west] {\,\,($\tfrac{9}{8},\tfrac{7}{4}$)};
\draw (0.75,0.05) -- (0.75,-.05);
\draw (0.75,0) node[anchor=north] {$\tfrac{3}{4}$};
\end{tikzpicture}
\end{minipage}
\hfill
\begin{minipage}[t]{0.48\textwidth}\centering
\begin{tikzpicture}[scale=1.1]
\draw[->] (0,0) -- (4,0) node[anchor=north] {$x$};
\draw[->] (0,0) -- (0,4) node[anchor=east] {$t$};
\draw (2,-.05) -- (2,.05);
\draw (2,0) node[anchor=north] {$2$};
\draw (0.05,1) -- (-.05,1);
\draw (0,1) node[anchor=east] {$1$};
\draw [line width=0.3mm](0,1) -- (9/8,7/4);
\draw [line width=0.3mm](2,0)--(9/8,7/4);
\draw[scale=1, line width=0.3mm, domain=1.75:4, samples=100, variable=\t] plot ({3/4+17/(32*\t-12)},{\t});
\draw (1.2,1) node[anchor=north,red] {\parbox{2cm}{$\rho=1$\\$u=1$}};
\draw (2.9,3) node[anchor=north,red] {\parbox{2cm}{$\rho=1$\\$u=-2$}};
\draw (1,3.2) node[anchor=north,red] {\parbox{2cm}{$\rho=1$\\$u=2$}};
\draw (9/8,7/4) circle[radius=1pt];
\fill (9/8,7/4) circle[radius=1pt];
\draw (0.75,0.05) -- (0.75,-.05);
\draw (0.75,0) node[anchor=north] {$\tfrac{3}{4}$};
\draw (1.5,1.4) node[anchor=west, rotate=-65,red] {$\rho=3t\delta$};
\draw (0,1.3) node[anchor=west,rotate=30,red] {$(t-1)\delta$};
\draw (1.5,3) node[anchor=north, rotate=-85,red] {$\rho=(4t-1)\delta$};
\end{tikzpicture}
\end{minipage}\\
\caption{Upper left: $F$; upper right: $G$; lower left: $\mu=\min\{F,G\}$; lower right: Solution satisfying initial and boundary conditions. The delta approaches $x=3/4$ for large time.}\label{fig:twoDel}
\end{figure}
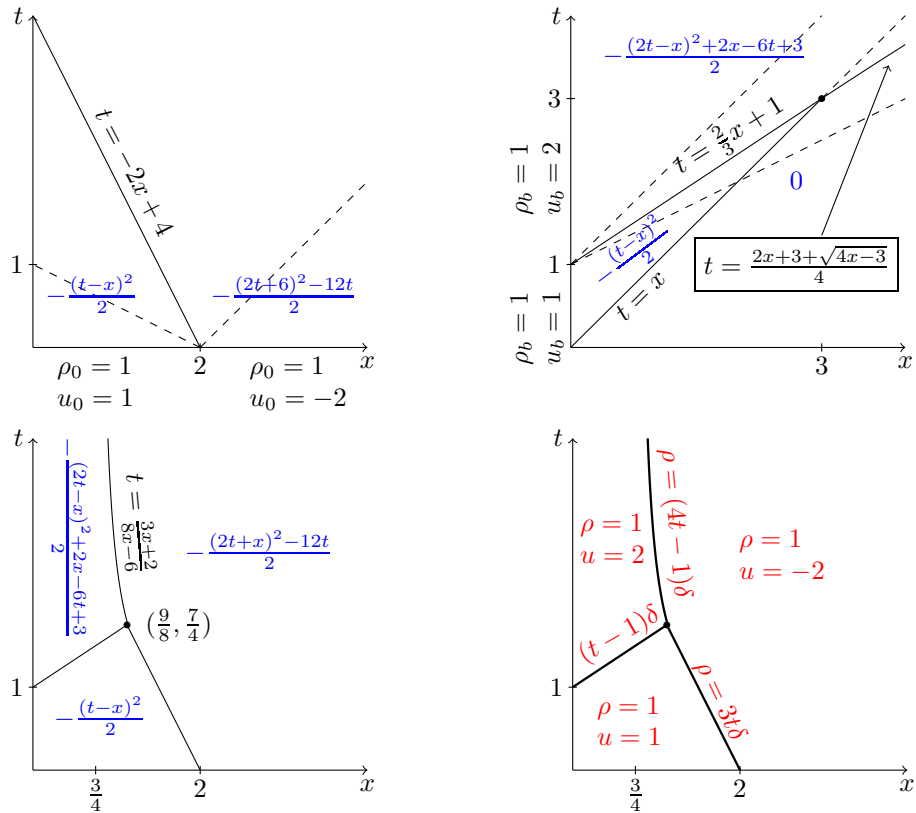

\end{document}